\def\thtext#1{
\catcode`@=11
\gdef\@thmcountersep{. #1}
\catcode`@=12
}
\def\threst{
\catcode`@=11
\gdef\@thmcountersep{.}
\catcode`@=12
}
\theoremstyle{plain}
\newtheorem{thm}{Theorem}[section]
\newtheorem{prop}[thm]{Proposition}
\newtheorem{cor}[thm]{Corollary}
\newtheorem{lem}[thm]{Lemma}
\theoremstyle{definition}
\newtheorem{conj}[thm]{Conjecture}
\newtheorem{rk}[thm]{Remark}
\newtheorem{examp}[thm]{Example}
\def\.{.\spacefactor\@m}
\def\:{\colon}
\def\0{\emptyset}
\def\<{\langle}
\def\>{\rangle}
\def\imply{ $\Rightarrow$ }
\def\rom#1{{\rm #1}}
\def\({\rom(}
\def\){\rom)}
\def\d{\partial}
\def\sm{\setminus}
\def\ss{\subset}
\def\sp{\supset}
\def\x{\times}
\def\tolim{\operatornamewithlimits{\longrightarrow}\limits}
\def\torllim{\operatornamewithlimits{\rightleftarrows}\limits}
\def\cA{\mathcal{A}}
\def\cH{\mathcal{H}}
\def\cK{\mathcal{K}}
\def\cCK{\mathcal{CK}}
\def\cM{\mathcal{M}}
\def\cP{\mathcal{P}}
\def\cR{\mathcal{R}}
\def\cU{\mathcal{U}}
\def\cV{\mathcal{V}}
\def\N{\mathbb{N}}
\def\R{\mathbb{R}}
\def\a{\alpha}
\def\b{\beta}
\def\dl{\delta}
\def\D{{\Delta}}
\def\e{\varepsilon}
\def\g{\gamma}
\def\l{\lambda}
\def\r{\rho}
\def\s{\sigma}
\def\t{\tau}
\def\v{\varphi}
\def\CARD{\operatorname{CARD}}
\def\codis{\operatorname{codis}}
\def\cov{\operatorname{cov}}
\def\diam{\operatorname{diam}}
\def\dis{\operatorname{dis}}
\def\dist{\operatorname{dist}}
\def\GH{\operatorname{\mathcal{G\!H}}}
\def\Id{\operatorname{Id}}
\def\ind{\operatorname{ind}}
\def\Ind{\operatorname{Ind}}
\def\ord{\operatorname{ord}}
\def\ORD{\operatorname{ORD}}
\def\TOP{\operatorname{TOP}}
\def\gA{\mathfrak{A}}
\def\gB{\mathfrak{B}}
\def\gC{\mathfrak{C}}
\begin{document}

\author{Semeon A.~Bogaty, Alexey A.~Tuzhilin}
\title{Fundamentals of Theory of Continuous Gromov--Hausdorff distance.}
\date{}
\maketitle

\tableofcontents

\begin{abstract}
The Gromov--Hausdorff distance (hereinafter referred to as the GH-distance) is a measure of non-isometricity of metric spaces. In this paper, we study a modification of this distance that also takes topological differences into account. The resulting function of pairs of metric spaces is called the continuous GH-distance. We show that many basic properties of the classical GH-distance also hold in the continuous case. However, the continuous GH-distance, distinguishing between topologies, can differ significantly from the classical one. We will provide numerous examples of this distinction and demonstrate the role of topological dimension here. In particular, we will prove that the continuous GH-distance, like the classical one, is intrinsic, but, unlike the classical one, it is incomplete. Since we are dealing with all metric spaces, we will show, within the framework of the von Neumann-Bernays-G\"odel set theory, how topological concepts can be transferred to proper classes. \\

\textbf{Keywords\/}: metric space, Hausdorff distance, Gromov--Hausdorff distance, topological dimension, small and large inductive dimensions, hyperspace, continuum.
\end{abstract}

\section{Introduction.}

This paper develops the theory of a variation on Gromov--Hausdorff distance geometry. Classical Gromov--Hausdorff distance measures the difference between two metric spaces: if the spaces are isometric, then they are at zero distance from each other, and the greater the distance, the greater their metric differences. The classical definition of the Gromov-Hausdorff distance does not take into account additional structures that metric spaces may possess. Even the topology generated by the metric is ignored by this distance. This feature of the Gromov-Hausdorff distance was modified, for example, by Rieffel~\cite{Rieffel}, who proposed a more subtle comparison of the so-called quantum metric spaces. In the paper~cite{LimMemoliSmith}, another modification of the Gromov-Hausdorff distance was proposed that takes continuity into account. It was noted that comparing spheres in Euclidean space, endowed with the standard intrinsic metric, the classical Gromov-Hausdorff distance differs from those obtained with its continuous analog. Another version of the continuous Gromov--Hausdorff distance was proposed by~\cite{LeeMorales} for comparing dynamical systems and solutions of partial differential equations. However, these authors' approach significantly complicates the technique, since their version of the distance does not satisfy the triangle inequality. We decided to use the definition from~\cite{LimMemoliSmith} and begin a more fundamental study of the continuous Gromov--Hausdorff distance. We do not restrict ourselves to compact metric spaces, which constitute the traditional domain of the classical Gromov--Hausdorff distance, but consider all spaces, which also leads us to some difficulties associated with the paradoxes of set theory. For the sake of completeness, we present in the supplementary Section~\ref{sec:add} a number of technical results that make it possible to work with such ``monsters'' as proper classes in the sense of von Neumann--Bernays--G\"odel~\cite{Mendelson,TBanach}.

To formulate the main results of this paper, we begin with a reminder of the necessary definitions. Let $(X,\rho)$ be a metric space, and $x$, $y$ be its points, then $|xy|=\r(x,y)$ will denote the distance between these points, and if $A$ and $B$ are nonempty subsets of $X$, then we put $|AB|=|BA|=\inf\bigl\{|ab|:a\in A,\,b\in B\bigr\}$. If $A=\{a\}$, then instead of $\bigl|\{a\}B\bigr|=\bigl|B\{a\}\bigr|$ we will write $|aB|=|Ba|$. For a positive real $s$ and a non-negative real $r$ we define
\begin{itemize}
\item the \emph{open ball with center $a\in X$ and radius $s$} as
$$
U_s(a)=\{x\in X:|xa|<s\};
$$
\item the \emph{open $s$-neighborhood of non-empty $A\ss X$} as
$$
U_s(A)=\{x\in X:|xA|<s\};
$$
\item the \emph{closed ball with center $a\in X$ and radius $r$} as
$$
B_r(a)=\{x\in X:|xa|\le r\};
$$
\item the \emph{closed $r$-neighborhood of non-empty $A\ss X$} as
$$
B_r(A)=\{x\in X:|xA|\le r\}.
$$
\end{itemize}
For non-empty subsets $A,B\ss X$, the value
$$
d_H(A,B)=\max\bigl\{\sup_{a\in A}|aB|,\,\sup_{b\in B}|Ab|\bigr\}
$$
is called the \emph{Hausdorff distance between $A$ and $B$}. An equivalent definition is:
$$
d_H(A,B)=\inf\bigl\{r:A\ss B_r(B)\ \ \text{and}\ \ B\ss B_r(A)\bigr\}.
$$
It is well known that $d_H$ is a generalized pseudometric on the set of all non-empty subsets of a metric space $X$. Here, the word ``generalized'' means that $d_H$ may be equal to infinity on some pairs of subsets (for example, on a bounded and an unbounded set), and the word ``pseudometric'' means that $d_H$ may be equal to zero on some pairs of distinct subsets (for example, on a non-closed set and its closure).

M. Gromov defined the distance between two non-empty metric spaces~\cite{Gromov1981, Gromov1999, BurBurIva01}. \emph{The Gromov--Hausdorff distance $d_{GH}(X,Y)$} is the greatest lower bound of the Hausdorff distances between the images of the spaces $X$ and $Y$ under their isometric embeddings in all possible metric spaces.

In this paper, we study a continuous analogue of the Gromov--Hausdorff distance $d_{GH}^c(X,Y)$ (Formula~(\ref{eq:7})). We show that
\begin{itemize}
\item the continuous Gromov--Hausdorff distance $d_{GH}^c(X,Y)$ is a generalized pseudometric\footnote{In~\cite{LeeMorales}, some analogue of the continuous Gromov--Hausdorff distance is also considered, but there this distance does not satisfy the triangle inequality, which leads to numerous technical difficulties. Instead of that, we follow the approach of~\cite{LimMemoliSmith}.} (Proposition~\ref{prop:triangle}) and it dominates the Gromov--Hausdorff metric $d_{GH}(X,Y)\le d_{GH}^c(X,Y)$ (Property (\ref{prop:GHelementProps:2}) of Proposition~\ref{prop:GHelementProps});
\item for zero-dimensional metric spaces these distances coincide (Corollary~\ref{cor:dim0}), and the continuous distance from a connected space $X$ to any zero-dimensional space is not less than the diameter of this connected space (Proposition~\ref{prop:ConnectDiscrete});
\item although every continuum in the Euclidean space $\R^n$, $n\ge 2$, is at zero distance from the set of all pseudo-arcs (Bing's Theorem~\cite{B}), its continuous Gromov--Hausdorff distance to the set of all pseudo-arcs is equal to half the diameter of this continuum (Proposition~\ref{prop:LinConnAndContinuum});
\item Cook's continuum provides a contrasting example of the difference between the Hausdorff and Gromov--Hausdorff metrics on one hand and the continuous Gromov--Hausdorff metric on the other (Corollary~\ref{cor:GHCont-intrinsic});
\item a metric space $Y$ at zero continuous Gromov--Hausdorff distance from a sphere $S^n$, $n\ge 1$, with an arbitrary metric, is isometric to this sphere (Corollary~\ref{cor:sphere}). But for every metric on the sphere $S^n$, $n\ge 1$, there are at least continuum pairwise non-isometric metric spaces at zero classical Gromov--Hausdorff distance from this sphere $S^n$ (for example, obtained from the sphere by discarding of finite sets of points that are pairwise nonisometric to each other).
\end{itemize}

We also show that, unlike the classical Gromov--Hausdorff distance~\cite{BT21, BT23, BBRT25}, the continuous Gromov--Hausdorff distance is not complete (Theorem~\ref{thm:noncomplete}). Here it should be emphasized that all metric spaces do not form a set, and therefore do not form a topological space. For complete rigor of exposition, a transition to the theory of von Neumann--Bernays--G\"odel classes~\cite{Mendelson} is necessary. Taking this into account, the concept of completeness of the (continuous) Gromov--Hausdorff distance is well defined~\cite{BT21}. The assertion we proved that the continuous Gromov--Hausdorff distance is intrinsic (Theorem~\ref{thm:GHCont-intrinsic}) also has a correct meaning. However, in order to remain within the framework of set theory, one can only consider metric spaces of some cardinalities bounded from above by some fixed one.

The work of A.A. Tuzhilin was supported by grant No. 25-21-00152 of the Russian Science Foundation, by National Key R\&D Program of China (Grant No. 2020YFE0204200), as well as by the Sino-Russian Mathematical Center at Peking University. Partial of the work by A.A. Tuzhilin were done in Sino-Russian Math. center, and he thanks the Math. Center for the invitation and the hospitality.

\section{Preliminaries}
The proper class of all metric spaces considered up to isometry is denoted by $\GH$, and the subset of $\GH$ consisting of all compact metric spaces is denoted by $\cM$.

A more technical definition of the Gromov--Hausdorff distance is often used. For non-empty metric spaces $X$ and $Y$, the set of non-empty relations between $X$ and $Y$, i.e., non-empty subsets of the Cartesian product $X\times Y$, is denoted by $\cP_0(X\times Y)$. For $\s\in\cP_0(X\times Y)$, we define the \emph{distortion\/} by setting
\begin{equation}\label{eq:1}
\dis\s=\sup\Bigl\{\bigl||xx'|-|yy'|\bigr|:(x,y),\,(x',y')\in\s\Bigr\}.
\end{equation}
In particular, if $f\:X\to Y$ is an arbitrary mapping, then we define the \emph{distortion $\dis f$} for it as the distortion of its graph
\begin{equation}\label{eq:2}
\dis f=\sup\Bigl\{\bigl||xx'|-|f(x)f(x')|\bigr|:x,x'\in X \Bigr\}.
\end{equation}

Let us note a well-known property of distortion of the relations composition.

\begin{prop}\label{prop:DisCompose}
If $\s\in\cP_0(X\times Y)$, $\t\in\cP_0(Y\times Z)$ and $\t\circ\s\ne\0$ then $\dis(\t\circ\s)\le\dis\s+\dis\t$.
\end{prop}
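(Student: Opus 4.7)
The plan is a one-line computation packaged carefully around the definition of the composition. Recall that $\t\circ\s\ss X\x Z$ consists of those pairs $(x,z)$ for which there exists at least one $y\in Y$ with $(x,y)\in\s$ and $(y,z)\in\t$. So the first step is to pick arbitrary $(x,z),(x',z')\in\t\circ\s$ and, using this definition, choose witnesses $y,y'\in Y$ with $(x,y),(x',y')\in\s$ and $(y,z),(y',z')\in\t$.

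Next, I would apply the elementary real-line inequality $\bigl||a|-|c|\bigr|\le\bigl||a|-|b|\bigr|+\bigl||b|-|c|\bigr|$ with $a=|xx'|$, $b=|yy'|$, $c=|zz'|$, which gives
$$
\bigl||xx'|-|zz'|\bigr|\le\bigl||xx'|-|yy'|\bigr|+\bigl||yy'|-|zz'|\bigr|.
$$
By the very definition~(\ref{eq:1}) of distortion, the first summand on the right is bounded by $\dis\s$ (since $(x,y),(x',y')\in\s$) and the second by $\dis\t$ (since $(y,z),(y',z')\in\t$). Taking the supremum over all pairs of elements of $\t\circ\s$ then yields the desired inequality.

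There is essentially no obstacle here; the only point requiring minor care is that the witnesses $y,y'$ may depend on the chosen pairs $(x,z),(x',z')$, so the bound must be obtained pointwise first and then supped, not the other way around. The hypothesis $\t\circ\s\ne\0$ is used only to ensure that the supremum on the left is taken over a nonempty set, so that $\dis(\t\circ\s)$ is a well-defined element of $[0,+\infty]$; if either $\dis\s$ or $\dis\t$ is infinite the inequality is trivial, so the argument is unchanged.
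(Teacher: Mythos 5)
Your proof is correct and is the standard argument; the paper itself states Proposition~\ref{prop:DisCompose} as a well-known fact and omits the proof entirely, so there is nothing to diverge from. Your handling of the witnesses $y,y'$ (bounding pointwise before taking the supremum) and of the nonemptiness hypothesis is exactly right.
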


A multivalued surjective mapping $R$ from $X$ to $Y$ is called a \emph{correspondence between $X$ and $Y$}. The set of all correspondences between $X$ and $Y$ is denoted by $\cR(X,Y)$. The Gromov--Hausdorff distance is calculated by the formula~\cite[Theorem 7.3.25]{BurBurIva01}
\begin{equation}\label{eq:3}
d_{GH}(X,Y)=\frac12\inf\bigl\{\dis R:R\in\cR(X,Y)\bigr\}.
\end{equation}

A special case of a correspondence can be constructed for any pair of mappings $f\:X\to Y$ and $g\:Y\to X$ by setting $R_{f,g}=f\cup g^{-1}$, where $g^{-1}$ denotes the inverse relation to the relation $g$. On the other hand, in each correspondence $R\in\cR(X,Y)$, a subcorrespondence $R_{f,g}$ can be extracted if $f\:X\to Y$ and $g\:Y\to X$ are chosen such that $f\ss R$ and $g\ss R^{-1}$. It is easy to see that the distortion of each subcorrespondence does not exceed the distortion of the correspondence, in particular, $\dis R_{f,g}\le\dis R$. To calculate the distortion of the correspondence $R_{f,g}$, we need the \emph{codistortion $\codis(f,g)$}, defined as follows:
\begin{equation}\label{eq:4}
\codis(f,g)=\sup\Bigl\{\bigr||x\,g(y)|-|f(x)\,y|\bigr|:x\in X,\,y\in Y\,\Bigr\}.
\end{equation}

\begin{prop}\label{prop:CorrespInTermsFuncs}
For any $X,Y\in\GH$, a correspondence $R\in\cR(X,Y)$ and any two mappings $f\:X\to Y$ and $g\:Y\to X$ such that $f\ss R$ and $g\ss R^{-1}$, the following holds\/\rom:
\begin{equation}\label{eq:5}
\dis R_{f,g}=\max\bigl\{\dis f,\,\dis g,\,\codis(f,g)\bigr\}\le \dis R.
\end{equation}
\end{prop}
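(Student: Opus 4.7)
The plan is to unfold $R_{f,g}=f\cup g^{-1}$ as a subset of $X\x Y$ and split the supremum that defines $\dis R_{f,g}$ according to which of the two summands each pair $(p,q)\in R_{f,g}$ belongs to. Since $f\ss R$ and $g\ss R^{-1}$ imply $g^{-1}\ss R$, we get $R_{f,g}\ss R$ as relations, and monotonicity of distortion under inclusion (a trivial consequence of~(\ref{eq:1})) immediately yields the inequality $\dis R_{f,g}\le\dis R$ on the right-hand side of~(\ref{eq:5}). So the content of the proposition is really the computation of $\dis R_{f,g}$.

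For the equality, I would take two arbitrary pairs $(p_1,q_1),(p_2,q_2)\in R_{f,g}$. Each pair is either of the form $\bigl(x,f(x)\bigr)$ with $x\in X$ or of the form $\bigl(g(y),y\bigr)$ with $y\in Y$. This gives three essentially different cases:
\begin{itemize}
\item both pairs are from $f$, so $(p_i,q_i)=(x_i,f(x_i))$ and the quantity $\bigl||p_1p_2|-|q_1q_2|\bigr|=\bigl||x_1x_2|-|f(x_1)f(x_2)|\bigr|$ contributes exactly $\dis f$ to the supremum by~(\ref{eq:2});
\item both pairs are from $g^{-1}$, giving analogously $\dis g$;
\item one pair is $(x,f(x))$ and the other is $(g(y),y)$, giving $\bigl||x\,g(y)|-|f(x)\,y|\bigr|$, whose supremum over $x\in X,\,y\in Y$ is exactly $\codis(f,g)$ by~(\ref{eq:4}).
\end{itemize}
Since the sup of a union equals the max of the sups over the pieces, this yields $\dis R_{f,g}=\max\{\dis f,\dis g,\codis(f,g)\}$.

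I do not anticipate any serious obstacle here: the argument is a straightforward case analysis combined with the observation that sup commutes with finite max. The only point that deserves a brief sentence is the reduction to three cases rather than four, which uses the symmetry $\bigl||p_1p_2|-|q_1q_2|\bigr|=\bigl||p_2p_1|-|q_2q_1|\bigr|$, so that the ``mixed'' case need be handled only once. The inclusion $R_{f,g}\ss R$ then closes the proof.
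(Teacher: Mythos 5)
Your proof is correct; the paper in fact states this proposition without proof (only remarking beforehand that the distortion of a subcorrespondence does not exceed that of the correspondence), and your case analysis of pairs in $f\cup g^{-1}$ together with the inclusion $R_{f,g}\ss R$ is exactly the intended, standard argument. No gaps.
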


From Equality (\ref{eq:3}) and Proposition~\ref{prop:CorrespInTermsFuncs} the following result is immediately obtained.

\begin{cor}[see {\cite[p.~3]{LimMemoliSmith}}]\label{cor:GH-metri-and-functions}
For any $X,Y\in \GH$, the following holds\/\rom:
\begin{equation}\label{eq:6}
d_{GH}(X,Y)=\frac12\,\inf_{\substack{f\:X\to Y\\ g\:Y\to X}}\,\max\bigl\{\dis f,\,\dis g,\,\codis (f,g)\bigr\}.
\end{equation}
\end{cor}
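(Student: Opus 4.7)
The plan is to establish the equality (\ref{eq:6}) by proving two inequalities, and both are essentially immediate consequences of Equation~(\ref{eq:3}) and Proposition~\ref{prop:CorrespInTermsFuncs}.

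For the inequality $d_{GH}(X,Y) \le \frac12\inf_{f,g}\max\{\dis f, \dis g, \codis(f,g)\}$, I would fix arbitrary mappings $f\:X\to Y$ and $g\:Y\to X$ and observe that the relation $R_{f,g}=f\cup g^{-1}$ is automatically a correspondence: its projection to $X$ contains the graph of $f$, so it covers $X$, and its projection to $Y$ contains the image of $g^{-1}$, so it covers $Y$. Thus $R_{f,g}\in\cR(X,Y)$, and (\ref{eq:3}) gives $d_{GH}(X,Y)\le\frac12\dis R_{f,g}$. By the first equality in (\ref{eq:5}) of Proposition~\ref{prop:CorrespInTermsFuncs}, this quantity equals $\frac12\max\{\dis f,\dis g,\codis(f,g)\}$. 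Taking the infimum over all such pairs $(f,g)$ yields the desired inequality.

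For the reverse inequality, I would start from an arbitrary correspondence $R\in\cR(X,Y)$. Since $R$ is surjective onto $X$ in the first coordinate and onto $Y$ in the second, the axiom of choice produces mappings $f\:X\to Y$ with $f\ss R$ and $g\:Y\to X$ with $g\ss R^{-1}$. The inequality part of (\ref{eq:5}) then gives $\max\{\dis f,\dis g,\codis(f,g)\}=\dis R_{f,g}\le\dis R$. Hence for this particular $R$,
$$
\inf_{\substack{f\:X\to Y\\ g\:Y\to X}}\max\bigl\{\dis f,\dis g,\codis(f,g)\bigr\}\le \dis R,
$$
and taking the infimum over all $R\in\cR(X,Y)$ and using (\ref{eq:3}) gives $\frac12\inf_{f,g}\max\{\dis f,\dis g,\codis(f,g)\}\le d_{GH}(X,Y)$.

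There is no real obstacle here; the argument is a direct bookkeeping assembly of the two previous results. The only mild subtlety is the invocation of choice to extract $f$ and $g$ from a correspondence $R$, which is standard and harmless in the NBG framework described in the introduction. Combining the two inequalities yields (\ref{eq:6}).
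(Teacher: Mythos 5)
Your argument is correct and is precisely the reasoning the paper intends when it states that the corollary is ``immediately obtained'' from Equality~(\ref{eq:3}) and Proposition~\ref{prop:CorrespInTermsFuncs}: one direction uses that $R_{f,g}$ is always a correspondence together with the equality in~(\ref{eq:5}), and the other extracts $f\ss R$, $g\ss R^{-1}$ and uses the inequality $\dis R_{f,g}\le\dis R$. Nothing is missing; you have simply written out the two-sided bookkeeping that the paper leaves implicit.
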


It follows from Proposition~\ref{prop:DisCompose} that the distortion of the composition of mappings does not exceed the sum of the distortions of these mappings.

\begin{prop}
Let three metric spaces $X$, $Y$ and $Z$ be given, as well as mappings $X\torllim^f_gY\torllim^h_kZ$.
Then
$$
\codis(h\circ f,g\circ k)\le\codis(f,g)+\codis(h,k).
$$
\end{prop}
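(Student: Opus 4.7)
The plan is to reduce the inequality to two separate applications of the definition of codistortion, linked by the triangle inequality for the absolute value. By definition,
$$
\codis(h\circ f,\,g\circ k)=\sup\Bigl\{\bigl|\,|x\,g(k(z))|-|h(f(x))\,z|\,\bigr|:x\in X,\,z\in Z\Bigr\},
$$
so it suffices to show that for each fixed $x\in X$ and $z\in Z$, the quantity $\bigl|\,|x\,g(k(z))|-|h(f(x))\,z|\,\bigr|$ is at most $\codis(f,g)+\codis(h,k)$; the inequality for the suprema then follows.

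First I would introduce the bridging quantity $|f(x)\,k(z)|$, which lives in $Y$ and connects the two terms. Applying the triangle inequality for $|\cdot|$ gives
$$
\bigl|\,|x\,g(k(z))|-|h(f(x))\,z|\,\bigr|\le \bigl|\,|x\,g(k(z))|-|f(x)\,k(z)|\,\bigr|+\bigl|\,|f(x)\,k(z)|-|h(f(x))\,z|\,\bigr|.
$$
The first summand is bounded by $\codis(f,g)$ upon specializing the defining supremum for $\codis(f,g)$ to the pair $(x,\,y)$ with $y:=k(z)\in Y$. The second summand is bounded by $\codis(h,k)$ upon specializing the defining supremum for $\codis(h,k)$ to the pair $(y,\,z)$ with $y:=f(x)\in Y$.

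Combining these two bounds and then taking the supremum over $(x,z)\in X\times Z$ yields the desired inequality. There is no real obstacle here: the entire argument is the insertion of a single bridging distance and a two-fold use of the triangle inequality; the only thing to keep track of is the correct roles of $Y$ as the intermediate space for both $\codis(f,g)$ (with $Y$ on the right) and $\codis(h,k)$ (with $Y$ on the left).
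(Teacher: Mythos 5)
Your proposal is correct and is essentially identical to the paper's own proof: both insert the bridging distance $|f(x)\,k(z)|$ in $Y$, apply the triangle inequality for the absolute value, and bound the two resulting terms by $\codis(f,g)$ (at the pair $(x,k(z))$) and $\codis(h,k)$ (at the pair $(f(x),z)$) before taking the supremum.
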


\begin{proof}
Let's choose arbitrary $x\in X$, $z\in Z$ and let's put $y_1=f(x)$, $y_2=k(z)$. Then
\begin{multline*}
\Bigl|\bigl|x\,(g\circ k)(z)\bigr|-\bigl|(h\circ f)(x)\,z\bigr|\Bigr|=
\Bigl|\bigl|x\,g(y_2)\bigr|-\bigl|h(y_1)\,z\bigr|\Bigr|=\\ =
\Bigl|\bigl|x\,g(y_2)\bigr|-\bigl|y_1y_2\bigr|+\bigl|y_1y_2\bigr|-\bigl|h(y_1)\,z\bigr|\Bigr|\le \\
\le \Bigl|\bigl|x\,g(y_2)\bigr|-\bigl|f(x)y_2\bigr|\Bigr|+
\Bigl|\bigl|y_1k(z)\bigr|-\bigl|h(y_1)\,z\bigr|\Bigr|\le\codis(f,g)+\codis(h,k).
\end{multline*}
The result follows from the arbitrariness of $x$ and $z$.
\end{proof}

If in Corollary~\ref{cor:GH-metri-and-functions} we restrict ourselves to continuous mappings $f$ and $g$, then we obtain the definition of the \emph{continuous Gromov--Hausdorff distance\/}:
\begin{equation}\label{eq:7}
d^c_{GH}(X,Y)=\frac12\,\inf_{\substack{f\in C(X,Y)\\ g\in C(Y,X)}}\,\dis R_{f,g}=
\frac12\,\inf_{\substack{f\in C(X,Y)\\ g\in C(Y,X)}}\,\max\bigl\{\dis f,\,\dis g,\,\codis (f,g)\bigr\},
\end{equation}
where $C(Z,W)$ denotes the set of all continuous mappings from the metric space $Z$ to the metric space $W$.
For brevity, we will also call this distance the \emph{continuous GH-distance}.

For a (non-single-point) metric space $X$ and a point $x\in X$, we define the circumscribed and inscribed radii (centered at $x$) as $R_x=\sup\bigl\{|xx'|:x'\in X\bigr\}$ and $r_x=\inf\bigl\{|xx'|:x'\in X,\ x'\ne x\bigr\}$, respectively. We introduce four values:
\begin{align*}
\diam(X)&=\sup\bigl\{R_x:x\in X\bigr\}=\sup\bigl\{|xx'|:x,x'\in X\bigr\}\ \text{(\emph{diameter\/})}, \\
R(X)&=\inf\bigl\{R_x:x\in X\bigr\}\ \text{(\emph{Chebyshev radius\/})}, \\
d(X)&=\sup\bigl\{r_x:x\in X\bigr\},\\
s(X)&=\inf\bigl\{r_x:x\in X\bigr\}=\inf\bigl\{|xx'|:x,x'\in X,\ x'\ne x\bigr\}.
\end{align*}

For a one-point metric space we reserve the notation $\D_1$. It is natural to assume $\diam(\D_1)=R(\D_1)=d(\D_1)=s(\D_1)=0$. It is known~\cite[p\. 255]{BurBurIva01} that $2d_{GH}(\D_1,X)=\diam X$. Note that for an unbounded space, it holds $\diam(X)=R(X)=\infty$.

It is clear that for a non-single-point metric space, $d(X)=0$ if and only if $X$ has no isolated points. For a non-single-point finite metric space, $s(X)>0$. When $s(X)>0$ we say that the metric is bounded away from zero. For the standard sphere $S^m\ss\R^{m+1}$, equipped with both the metric induced from $\R^{m+1}$ and the intrinsic metric, we have $0=s(S^m)=d(S^m)<R(S^m)=\diam(S^m)$; for a ball $B^m\ss\R^m$, we have $0=s(B^m)=d(B^m)<2R(B^m)=\diam(B^m)$.

The next result is obvious.

\begin{prop}\label{prop:inequv}
For an arbitrary metric space $X$, the following chains of inequalities are valid
$$
0\le s(X)\le d(X)\le \diam(X)\le 2R(X)\ \ \text{and}\ \ s(X)\le R(X)\le \diam(X).
$$
\end{prop}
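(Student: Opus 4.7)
The plan is to verify each inequality separately, using only the definitions, the triangle inequality, and the elementary fact that $\inf\le\sup$ for any nonempty set of reals. Throughout, I would treat the one-point case $X=\D_1$ and the unbounded case as trivial instances handled by the stated conventions ($\diam=R=d=s=0$ for $\D_1$; $\diam=R=\infty$ when unbounded), so that all inequalities become $0\le 0$ or $c\le\infty$.

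For the first chain, $0\le s(X)$ is immediate since every $r_x$ is a nonnegative infimum. The bounds $s(X)\le d(X)$ and $R(X)\le\diam(X)$ are both instances of $\inf_{x}\le\sup_{x}$ applied to the functions $x\mapsto r_x$ and $x\mapsto R_x$ respectively. For $d(X)\le\diam(X)$ and $s(X)\le R(X)$, I would establish the pointwise inequality $r_x\le R_x$ valid for every $x\in X$ with $|X|\ge 2$: indeed, picking any $x'\ne x$ gives $r_x\le|xx'|\le R_x$, and then taking $\sup_x$ yields $d(X)\le\diam(X)$ while taking $\inf_x$ yields $s(X)\le R(X)$.

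The only place where the triangle inequality plays a role is the bound $\diam(X)\le 2R(X)$. For arbitrary $x\in X$ and any $x',x''\in X$,
$$
|x'x''|\le|x'x|+|xx''|\le 2R_x.
$$
Taking $\sup$ over $x',x''$ gives $\diam(X)\le 2R_x$ for every $x$, and then $\inf$ over $x$ yields $\diam(X)\le 2R(X)$.

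There is no real obstacle here; the only bookkeeping issue is making sure that one does not have to separately exclude one-point, unbounded, or non-closed configurations, which is why the conventions $\diam(\D_1)=R(\D_1)=d(\D_1)=s(\D_1)=0$ and the agreement $\diam(X)=R(X)=\infty$ in the unbounded case are convenient. With those in place, every step above reduces to a one-line manipulation of $\inf$, $\sup$, and the triangle inequality, which is presumably why the authors simply call the result obvious.
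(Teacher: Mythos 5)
Your proof is correct and is exactly the routine verification the paper has in mind when it calls the result obvious (no proof is given in the paper); the pointwise bound $r_x\le R_x$ plus $\inf\le\sup$ handles four of the inequalities, and the triangle inequality gives $\diam(X)\le 2R(X)$, with the one-point and unbounded cases absorbed by the stated conventions.
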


For a metric space $(X,\r)$ and a number $\l\ge 0$, we will denote by $\l X$ the set $X$ with (pseudo)met\-ric $\l\r$.

\subsection{Dimensions of Topological Spaces}
We will present three different notions of dimension, which coincide in the case of separable metric spaces and may differ for more general metric and topological spaces. We will begin with dimension, which in different sources is called either the Lebesgue dimension, or the covering dimension, or the topological dimension. This is the one we will primarily use. We will give the classical definition from~\cite{E85}. Recall that for a covering $\cU$ of a topological space $X$, its \emph{multiplicity $\ord\,\cU$} is defined to be either the smallest natural number $n$ such that every point of $X$ is contained in at most $n$ elements of $\cU$, or, if such $n$ does not exist, then the infinity symbol $\infty$. A family $\cA$ of subsets of a topological space $X$ is called \emph{locally finite\/} if every point of $X$ has a neighborhood that intersects only finitely many elements of $\cA$. An important special case of such families are \emph{locally finite coverings of $\cU$}.

\begin{thm}[{\cite[1.1.12]{E85}}]
Let $\mathcal{F}$ be a locally finite family of closed sets of a topological space $X$. Then the union of elements of this family is a closed subset of $X$.
\end{thm}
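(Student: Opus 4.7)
The plan is to show that the complement of $A:=\bigcup\mathcal{F}$ is open, which amounts to producing, for each point $x\notin A$, an open neighborhood of $x$ disjoint from $A$. Local finiteness gives us the right-sized neighborhood to work with: only finitely many members of $\mathcal{F}$ interact with a small neighborhood of $x$, and for each such member we can peel off an open neighborhood of $x$ using closedness, because $x$ lies in the complement of each.

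More precisely, I would proceed as follows. Take $x\in X\setminus A$. By local finiteness pick an open $U\ni x$ meeting only finitely many elements $F_1,\ldots,F_n\in\mathcal{F}$. Since $x\notin A$, in particular $x\notin F_i$ for $i=1,\ldots,n$, and each $F_i$ is closed, so there exist open sets $V_i\ni x$ with $V_i\cap F_i=\0$. Set $W=U\cap V_1\cap\cdots\cap V_n$. Then $W$ is an open neighborhood of $x$. For any $F\in\mathcal{F}$, either $F\cap U=\0$ (so $F\cap W=\0$), or $F\in\{F_1,\ldots,F_n\}$ in which case $W\subseteq V_i$ yields $F\cap W=\0$. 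Hence $W\cap A=\0$, and $X\setminus A$ is open.

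An equivalent route, in case one prefers to argue with closures: given $x\in\overline{A}$, choose $U$ as above; then every neighborhood of $x$ contained in $U$ must meet $A$ inside one of $F_1,\ldots,F_n$, whence $x\in\overline{F_1\cup\cdots\cup F_n}$, and since a finite union of closed sets is closed, $x\in F_1\cup\cdots\cup F_n\subseteq A$.

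There is essentially no obstacle here: the whole content is the reduction from the (possibly huge) family $\mathcal{F}$ to a finite subfamily via local finiteness, after which the standard fact that a finite union of closed sets is closed finishes the job. The only mild subtlety worth flagging is that local finiteness is used genuinely twice in spirit (once to find the distinguished $U$, and implicitly to guarantee that the finitely many $V_i$ suffice), so one should state the choice of $U$ carefully at the outset.
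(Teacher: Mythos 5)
Your proof is correct and is exactly the standard argument (the one in Engelking, which the paper cites without reproducing): local finiteness reduces to a finite subfamily near each point outside the union, and closedness of each member then yields an open neighborhood missing the union. Nothing to add.
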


Furthermore, a covering $\cV$ of $X$ is said to be \emph{inscribed\/} in a covering $\cU$ of this space (we denote this property by $\cV\succ\cU$) if for every $V\in\cV$ there is $U\in\cU$ such that $V\ss U$. Of most interest to us are open coverings of a topological space $X$, so we denote the set of such coverings by $\cov(X)$. The subfamily of $\cov(X)$ consisting of finite coverings is denoted by $\cov_f(X)$.

\begin{thm}[Stone, see also~{\cite[4.4.1]{E85}}]\label{thm:Stone}
Every open covering of a metric space can be inscribed with a locally finite open covering.
\end{thm}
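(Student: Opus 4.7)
The plan is to prove this via the classical argument of M.E.~Rudin, producing a locally finite open refinement from an arbitrary open covering by combining a well-ordering of the index set with a geometric ``shrinking at dyadic scales'' construction. Let $\cU=\{U_\a:\a\in A\}$ be an open covering of a metric space $(X,\r)$. Well-order $A$ (using the axiom of choice, which in the von Neumann--Bernays--G\"odel framework is available in the form of global choice). The goal is to construct open sets $V_{\a,n}$ for $\a\in A$ and $n\in\N$ whose collection refines $\cU$, covers $X$, and is locally finite.

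The main step is the inductive construction. For each $n$, suppose the sets $V_{\b,m}$ have been defined for all $m<n$ and all $\b\in A$. For each $\a\in A$, let $D_{\a,n}$ be the set of $x\in X$ satisfying:
\begin{enumerate}
\item $\a$ is the least index of $A$ such that $x\in U_\a$;
\item the ball $B_{3\cdot 2^{-n}}(x)$ is contained in $U_\a$;
\item $x\notin V_{\b,m}$ for every $m<n$ and $\b\in A$.
\end{enumerate}
Then define
$$
V_{\a,n}=\bigcup_{x\in D_{\a,n}}U_{2^{-n}}(x).
$$
Each $V_{\a,n}$ is open by construction and is contained in $U_\a$ by condition~(2), so $\cV=\{V_{\a,n}:\a\in A,\,n\in\N\}\succ\cU$.

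To see that $\cV$ covers $X$, take any $x\in X$ and let $\a$ be the least index with $x\in U_\a$. Choose the least $n$ for which $B_{3\cdot 2^{-n}}(x)\ss U_\a$, which exists because $U_\a$ is open. If $x$ already lies in some $V_{\b,m}$ with $m<n$ we are done; otherwise $x\in D_{\a,n}$ and so $x\in V_{\a,n}$. It remains to prove local finiteness, which is the heart of the argument and the step I expect to be the main obstacle. Fix $x\in X$; by the previous step choose $(\a,n)$ with $x\in V_{\a,n}$ and pick $j\ge 1$ so that $U_{2^{-n-j}}(x)\ss V_{\a,n}$. One then shows that $U_{2^{-n-j-1}}(x)$ intersects only finitely many sets $V_{\b,m}$ from $\cV$: for $m\ge n+j$, any $y\in D_{\b,m}$ would have to be at distance more than $2^{-n-j}$ from $x$ (because $y\notin V_{\a,n}$), but then the ball $U_{2^{-m}}(y)$ of radius $\le 2^{-n-j-1}$ cannot meet $U_{2^{-n-j-1}}(x)$ by the triangle inequality; and for $m<n+j$ only finitely many indices $\b$ arise, because any two points in $D_{\b,m}$ and $D_{\b',m}$ with $\b<\b'$ must be at mutual distance at least $2^{-m}$ (otherwise the one with larger index would fail condition (3) with respect to the other's $V_{\b,m}$), so only boundedly many such points can lie in the small ball around $x$. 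This yields the desired locally finite open refinement and completes the proof.
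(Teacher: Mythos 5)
First, note that the paper does not prove this statement at all: it is quoted as Stone's classical theorem with a reference to Engelking, so your argument is being compared against the standard literature rather than against anything in the text. Your construction is exactly M.~E.~Rudin's classical proof of Stone's theorem, and the skeleton (well-ordering of the index set, the three conditions defining $D_{\alpha,n}$, refinement, covering) is correct. However, the local finiteness step --- which you correctly single out as the heart of the matter --- has a genuine gap in the case $m<n+j$. You claim that centers $y\in D_{\beta,m}$ and $y'\in D_{\beta',m}$ with $\beta<\beta'$ satisfy $\rho(y,y')\ge 2^{-m}$ ``because otherwise the one with larger index would fail condition (3)''; but condition (3) only forbids membership in the sets $V_{\gamma,l}$ with $l<m$, so it says nothing about two centers at the \emph{same} level $m$. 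Moreover, even if that separation held, your conclusion that only boundedly many such points can lie in the small ball around $x$ is false in a general metric space: an infinite set with all pairwise distances equal to $1$ is $2^{-m}$-separated and entirely contained in a ball of radius slightly more than $1$, so no packing bound is available without a doubling hypothesis.

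The correct argument --- and the reason the constant $3$ appears in your condition (2) --- is this: if $\beta<\beta'$, $y\in D_{\beta,m}$ and $y'\in D_{\beta',m}$, then $B_{3\cdot 2^{-m}}(y)\subset U_\beta$ by condition (2) for $y$, while $y'\notin U_\beta$ by the minimality in condition (1) for $y'$; hence $\rho(y,y')\ge 3\cdot 2^{-m}$. If now $U_{2^{-n-j-1}}(x)$ met both $V_{\beta,m}$ and $V_{\beta',m}$, the two centers would be within $2\cdot 2^{-m}+2^{-n-j}<3\cdot 2^{-m}$ of each other (using $m<n+j$), a contradiction; so at most \emph{one} index $\beta$ occurs at each level $m<n+j$, which is the statement you actually need. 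There is also a small off-by-one in your case $m\ge n+j$: for $m=n+j$ the ball $U_{2^{-m}}(y)$ has radius $2^{-n-j}$, not at most $2^{-n-j-1}$, and the triangle inequality then gives only $\rho(x,y)<\tfrac32\cdot 2^{-n-j}$, which does not contradict $\rho(x,y)\ge 2^{-n-j}$; you should either shrink the test neighborhood of $x$ or move the boundary case $m=n+j$ into the other branch of the dichotomy. With these repairs the proof is complete and standard.
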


A topological space is called \emph{paracompact\/} if every open covering of it can be inscribed with a locally finite covering. In these terms, Theorem~\ref{thm:Stone} states: \emph{every metric space is paracompact}.

Note that each of the three dimensions is defined for its own class of metric spaces (regular, normal, Tychonoff). In what follows, we will apply the corresponding results exclusively to metric spaces, so we will assume from the outset that the space $X$ for which we will define dimensions is Hausdorff and normal, like every metric space: such an $X$ is always automatically regular and Tychonoff.

We first define the \emph{Lebesgue dimension $\dim\,X$}, also called the \emph{covering dimension\/} or the \emph{topological dimension\/}:
\begin{itemize}
\item if $X=\0$, then $\dim\,X=-1$;
\item if $X\ne\0$, then we set
$$
\dim\,X=-1+\sup_{\cU\in\cov_f(X)}\,\inf_{\substack{\cV\in\cov(X)\\ \cV\succ\cU}}\ord\,\cV.
$$
\end{itemize}
In other words, for a non-empty space $X$, we consider $n\in\mathbb{N}\cup\{\infty\}$ such that every finite open covering $\cU$ of $X$ can be inscribed with an open covering $\cV$ of multiplicity at most $n+1$, and we take the smallest of these $n$.

Note that $\dim\,X=0$ is equivalent to the possibility of inscribing an open partition into every finite open covering. Spaces $X$ for which $\dim\,X=0$ are called \emph{zero-dimensional}.

\begin{examp}\label{examp:DimZeroDiscrete}
Let $X$ be a nonempty discrete topological space. Then every finite open covering of $X$ can be inscribed with a covering of singleton open sets, hence $\dim\,X=0$.
\end{examp}

\begin{rk}
In a number of monographs, for example in {\rm\cite{Munkres}}, when defining $\dim\,X$, any, not necessarily finite, coverings of $\cU$ are considered. We will follow a more traditional definition.
\end{rk}

\begin{thm}[Dowker~\cite{Dowker}, see also~{\cite[7.2.4]{E85}}]\label{thm:Dowker}
A normal Hausdorff space $X$ satisfies $\dim\,X\le n$ if and only if each of its locally finite open coverings can be inscribed with a covering of multiplicity at most $n$.
\end{thm}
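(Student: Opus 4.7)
The plan for the $(\Leftarrow)$ direction is immediate: every finite open covering is locally finite, so the hypothesis that each locally finite open cover admits an inscribed covering of the stated multiplicity specializes, in particular, to the defining condition for $\dim X\le n$.

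For the nontrivial $(\Rightarrow)$ direction, I would proceed as follows. Let $\cU=\{U_s\}_{s\in S}$ be a locally finite open cover of $X$. First, I would invoke the standard shrinking lemma for locally finite open covers of a normal space twice to produce locally finite open covers $\{V_s\}_{s\in S}$ and $\{W_s\}_{s\in S}$ with $\overline{W_s}\ss V_s$ and $\overline{V_s}\ss U_s$ for every $s$; here local finiteness, combined with the closed-union theorem for locally finite families stated earlier in the paper, guarantees that unions like $\bigcup_{s\notin T}\overline{V_s}$ are closed. For each $x\in X$, the finite set $S(x)=\{s\in S:x\in\overline{V_s}\}$ captures which $U_s$ are relevant near $x$: the open neighborhood $G_x=X\sm\bigcup_{s\notin S(x)}\overline{V_s}$ meets only those $V_s$ with $s\in S(x)$, so locally the problem reduces to a finite one.

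The core step is then to convert this local finiteness into a global open refinement of $\cU$ of the required multiplicity. The natural strategy is a transfinite induction along a well-ordering of $S$: at each stage, extend the partial refinement already constructed for $\{U_{s'}:s'<s\}$ by inserting a suitable shrinking of $U_s\sm\bigcup_{s'<s}\overline{W_{s'}}$, using normality to separate closed sets and applying the defining property of $\dim X\le n$ to finite subfamilies in order to keep multiplicity under control step by step. An equivalent route is to approximate $\cU$ by finite open covers, obtained by taking $\{U_{s_1},\ldots,U_{s_k}\}$ together with the open tail $X\sm\bigcup_{i=1}^k\overline{W_{s_i}}$, apply the defining property of $\dim X\le n$ to each such finite cover, and then pass to the limit while using local finiteness to retain the multiplicity bound.

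The main obstacle is precisely this passage from the finite regime, in which the hypothesis $\dim X\le n$ directly delivers a refinement of controlled multiplicity, to the locally finite regime, in which $S$ may be arbitrarily large. Local finiteness is what makes the patching possible, since at every point only finitely many refinement pieces are simultaneously in play; the delicate part is to organize the construction (whether transfinite, or by approximation through finite covers) so that successive refinements remain mutually consistent, are glued into a single open refinement inscribed in $\cU$, and do not accumulate multiplicity beyond the target bound in the limit.
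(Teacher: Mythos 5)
The paper does not prove this statement at all: it is imported verbatim as a classical result, with the proof delegated to Dowker's original article and to Engelking~\cite[7.2.4]{E85}. So there is no internal argument to compare yours against, and your attempt has to stand on its own.

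Your $(\Leftarrow)$ direction is fine: a finite open cover is locally finite, so the hypothesis specializes to the defining condition for $\dim X\le n$. (Be aware, though, that as printed the theorem says ``multiplicity at most $n$'' while the paper's definition of $\dim X\le n$ asks for refinements of multiplicity at most $n+1$; the statement is off by one, and your phrase ``the defining condition'' silently reads it as $n+1$. That slip is the paper's, not yours, but your write-up should say which convention it is using.)

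The $(\Rightarrow)$ direction, however, is not a proof but an annotated to-do list. You correctly assemble the standard preliminaries (double shrinking of a locally finite cover in a normal space, closedness of unions of locally finite closed families, the finite sets $S(x)$ and the neighborhoods $G_x$), and you correctly locate the difficulty in the passage from finite to locally finite covers --- but then you offer two alternative ``strategies'' and explicitly defer ``the delicate part.'' That deferred part \emph{is} Dowker's theorem; everything you actually carried out is routine. Concretely: in the transfinite-induction route, the step ``insert a suitable shrinking of $U_s\setminus\bigcup_{s'<s}\overline{W_{s'}}$ \dots\ to keep multiplicity under control step by step'' is exactly where a naive construction fails, because refining each finite subfamily separately produces pieces from unboundedly many stages overlapping across the space, and nothing you have written prevents the order from creeping above $n+1$ at points lying in the closures of sets introduced at different stages. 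The known proofs avoid this by a specific device --- e.g.\ first establishing that for finite covers one may take the refinement to be a \emph{shrinking} indexed by the same set (so that multiplicity is controlled by the original cover), and then either running the transfinite construction on shrinkings or passing through the characterization of $\dim$ by partitions/essential families. In the ``approximation by finite covers'' route, ``pass to the limit while using local finiteness to retain the multiplicity bound'' is likewise an aspiration, not an argument: the finite-stage refinements need not be compatible, and one must explain how to glue them without increasing order. As it stands, the core implication is asserted rather than proved.
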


We now define two \emph{inductive dimensions\/} of a normal Hausdorff space $X$: \emph{small $\ind\,X$} and \emph{large $\Ind\,X$}. Recall that every subspace of a Hausdorff space is also Hausdorff, and for normal spaces, a more subtle assertion holds: every closed subspace of a normal space is itself normal. In particular, since the boundary of an arbitrary subset of a topological space is closed, the boundaries of normal Hausdorff topological spaces are also closed. We will use this idea in defining the inductive dimensions.

So, let
\begin{itemize}
\item $\ind\,X=-1$ precisely when $X=\0$;
\item if $X\ne\0$ and for $n\in\mathbb{N}\cup\{0\}$ we have already defined what $\ind\,Y\le n-1$ means for Hausdorff normal topological spaces, then we put $\ind\,X\le n$ if and only if for every point $x\in X$ and any of its neighborhoods $V$ there exists an open neighborhood $U$, $x\in U\ss V$, satisfying $\ind\,\d U\le n-1$;
\item put $\ind\,X=n$ if and only if $\ind\,X\le n$ and it is not true that $\ind\,X\le n-1$;
\item if the number $n$ described above does not exist, then we put $\ind\,X=\infty$.
\end{itemize}

\begin{examp}\label{examp:indVanishes}
For a nonempty $X$, the condition $\ind\,X=0$ means that every point $x\in X$ in each of its neighborhoods $V$ has an open neighborhood $U$ with empty boundary, i.e., $U$ is a clopen subset of $X$. If such a $U$ is distinct from $X$, then $X\sm U$ is also a nonempty open subset of $X$. Thus, the connected components of $X$ are only singleton subsets. Indeed, if $Y\ss X$ consists of more than one point and $x\in Y$, then for every point $y\in Y$ distinct from $x$ there are, by the Hausdorff property, disjoint neighborhoods $V^x$ and $V^y$. By choosing in $V^x$ an open neighborhood $U$ of $x$ in $X$, we partition the set $Y$ into two nonempty open sets $Y\cap U$ and $Y\cap(X\sm U)$, so that $Y$ is disconnected. Recall that a topological space in which all connected components are one-point subsets is called \emph{totally disconnected}. Thus, we have shown that the condition $\ind\,X=0$ implies the total disconnectedness of $X$.
\end{examp}

\begin{examp}\label{examp:indNotVanishe}
Now let us extract from Example~\ref{examp:indVanishes} a consequence for the space $X$ with $\ind\,X>0$. The last condition means that for some point $x\in X$, the zero-dimensionality requirement is not satisfied. This means that there exists a neighborhood $V$ of $x$ for which every neighborhood $U\ss V$ of this point is not clopen. Thus, every clopen neighborhood of $x$ necessarily intersects $F:=X\sm V$. Thus, we get the following formulation: \textbf{the condition $\ind\,X>0$ is equivalent to the existence of a point $x\in X$ and a closed set $F$ not containing it such that every clopen neighborhood of $x$ intersects $F$}.
\end{examp}

We now define the large inductive dimension. Let
\begin{itemize}
\item $\Ind\,X=-1$ precisely when $X=\0$;
\item if $X\ne\0$ and for $n\in\mathbb{N}\cup\{0\}$ we have already defined what $\Ind\,Y\le n-1$ means for Hausdorff normal topological spaces, then we put $\Ind\,X\le n$ if and only if for every closed $F\ss X$ and every open $V\ss X$ such that $F\ss V$, there exists an open $U\ss X$, $F\ss U\ss V$, satisfying $\Ind\,\d U\le n-1$;
\item put $\Ind\,X=n$ if and only if $\Ind\,X\le n$ and it is not true that $\Ind\,X\le n-1$;
\item if the number $n$ described above does not exist, then we set $\Ind\,X=\infty$.
\end{itemize}

\begin{examp}\label{examp:IndVanishesNotVanish}
We will carry out reasoning similar to that in Examples~\ref{examp:indVanishes} and~\ref{examp:indNotVanishe}. For a nonempty $X$, the condition $\Ind\,X=0$ means that for every closed set $F\ss X$, every open $V\sp F$ contains an open $U\sp F$ with empty boundary, i.e., $U$ is a clopen subset of $X$. In particular, \emph{if $X$ is a nonempty discrete space, then $\Ind\,X=0$.}

Let us now formulate an equivalent reformulation: the condition $\Ind\,X>0$ means the existence of a closed set $F\ss X$ and an open set $V\sp F$ such that each clopen set $U\sp F$ intersects the closed set $H:=X\sm V$. Thus, we arrive at the following variant: \emph{the condition $\Ind\,X>0$ is equivalent to the existence of disjoint closed sets $F,H\ss X$ such that every clopen set $U\sp F$ intersects $H$.} If in the previous statement we take $X$ to be a metric space and replace the condition $F\cap H=\0$ with $|FH|>0$, then we obtain the Nagami--Roberts theorem~\cite{NR65}: \emph{for a metric space $X$, the condition $\Ind\,X>0$ is equivalent to the existence of closed sets $F,H\ss X$ such that $|FH|>0$ and every clopen set $U\sp F$ intersects $H$}.
\end{examp}

\begin{rk}
Since all singleton subsets of $X$ are closed by virtue of the Hausdorff property, $\ind\,X\le\Ind\,X$.
\end{rk}

\begin{thm}[\cite{E85}]\label{thm:SeparMetriAllDimsEq}
For any separable metric space $X$, it holds $\dim\,X=\ind\,X=\Ind\,X$.
\end{thm}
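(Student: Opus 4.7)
The plan is to establish the chain $\ind X\le\Ind X\le\dim X\le\ind X$, after which equality of the three quantities is automatic. The first inequality holds for every Hausdorff normal space: singletons are closed, so the defining condition for $\Ind\le n$ applied to the closed set $\{x\}$ and any open neighborhood $V$ of $x$ produces exactly the open $U$ required for $\ind\le n$ at $x$, with $\ind\d U\le\Ind\d U\le n-1$ by strong induction. So the real content lies in the other two inequalities, and separability is only needed for the last one.

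For $\Ind X\le\dim X$, I would induct on $n=\dim X$. Given disjoint closed $F$ and $H=X\sm V$ with $F\ss V$ open, use normality to pick an open $W$ with $F\ss W\ss\overline W\ss V$. The finite open cover $\cU=\{V,\,X\sm\overline W\}$ admits, thanks to $\dim X\le n$ together with Theorems~\ref{thm:Stone} and~\ref{thm:Dowker}, a locally finite open refinement $\cV$ of multiplicity at most $n+1$. Let $\cV_1=\{V'\in\cV:V'\cap\overline W\ne\0\}$ and set $U=\bigcup\cV_1$; the locally-finite-closed-union theorem (applied to the closures) shows that $F\ss U\ss V$ and that $\d U$ is contained in the closed set $\bigcup\cV_1\cap\bigcup(\cV\sm\cV_1)$. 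Points of $\d U$ therefore lie in at most $n$ elements of $\cV$, so a standard restriction argument yields $\dim\d U\le n-1$, and the inductive hypothesis gives $\Ind\d U\le n-1$.

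For $\dim X\le\ind X$, separability enters decisively. Induct on $n=\ind X$. Since every separable metric space is second countable, I would build a countable base $\cB=\{B_k\}$ of open sets with $\ind\d B_k\le n-1$: this is possible because at every point the defining neighborhoods for $\ind\le n$ can be shrunk into a countable base. Given a finite open cover $\cU$ of $X$, first inscribe in it a countable subfamily $\cB'\ss\cB$. The inductive hypothesis applied in the separable metric space $F=\bigcup_k\d B_k$ (whose dimension is at most $n-1$, by the countable sum theorem) allows one to refine the traces of $\cB'$ on $F$ into a covering of multiplicity at most $n$, which is then extended to a refinement of $\cU$ on all of $X$ of multiplicity at most $n+1$; this proves $\dim X\le n$.

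The main obstacle is this last step: converting a countable base with low-dimensional boundaries into a global cover refinement with controlled multiplicity requires the countable sum theorem $\dim(\bigcup F_k)\le\sup_k\dim F_k$ for closed sets in a metric space, and a careful simultaneous trimming along the boundaries $\d B_k$. Both ingredients rely essentially on second countability, which is why the argument fails for general (non-separable) metric spaces, where only $\dim X=\Ind X$ survives while $\ind X$ may be strictly smaller.
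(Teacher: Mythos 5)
The paper gives no argument for this statement at all: it is quoted from Engelking~\cite{E85} as the classical coincidence theorem, so there is nothing in the text to compare your proof against line by line. Your skeleton $\ind X\le\Ind X\le\dim X\le\ind X$ is a legitimate way to organize the classical proof, and the first link (valid for every normal Hausdorff space) is handled correctly.

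The middle link, however, contains a genuine gap. After refining $\{V,\,X\sm\overline W\}$ to a locally finite open cover $\cV$ of multiplicity at most $n+1$ and forming $U=\bigcup\cV_1$, your observation that every point of $\d U$ lies in at most $n$ members of $\cV$ is correct (any point of $\d U$ avoids all of $\cV_1$ but is adherent to some member of it, and a common point of that member with the $k$ members of $\cV\sm\cV_1$ through $x$ forces $k+1\le n+1$). But from the existence of \emph{one} open cover whose trace on $\d U$ has multiplicity at most $n$ you cannot conclude $\dim\d U\le n-1$: the definition of $\dim$ quantifies over \emph{all} finite open covers of the subspace, otherwise the trivial cover $\{X\}$, of multiplicity $1$, would make every space zero-dimensional. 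What is actually needed is a partition $\d U$ every finite open cover of which refines to multiplicity at most $n$, and producing such a partition is precisely the hard content of the Katetov--Morita inequality $\Ind X\le\dim X$. It cannot be obtained from normality and paracompactness alone, which is all your argument uses: Lokucievskii's compactum is a compact Hausdorff (hence normal and paracompact) space with $\dim=1<\Ind=2$, so any correct proof of this link must invoke metrizability in an essential way. The last link $\dim X\le\ind X$ is likewise only gestured at --- the countable closed sum theorem and the ``simultaneous trimming'' along the boundaries $\d B_k$ are themselves the substantive theorems of the separable theory and would have to be proved, not cited as standard. Your closing remark that for non-separable metric spaces only $\dim X=\Ind X$ survives while $\ind X$ may be strictly smaller (Roy's example) is correct.
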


Recall that a topological space is called \emph{Lindel\"of\/} if any of its open covering contains an at most countable subcovering. Examples of Lindel\"of spaces include both compacta and spaces with a countable base.

\begin{thm}[\cite{E85}]
For any Lindel\"of $X$, the conditions $\dim\,X=0$, $\ind\,X=0$, and $\Ind\,X=0$ are equivalent.
\end{thm}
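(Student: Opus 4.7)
The plan is to close the cycle $\Ind X=0\Rightarrow\ind X=0\Rightarrow\dim X=0\Rightarrow\Ind X=0$. The first implication is immediate from the Remark that $\ind X\le\Ind X$ always (the degenerate case $X=\0$ being trivial, as all three dimensions then equal $-1$), so the real work is in the remaining two implications.

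For $\ind X=0\Rightarrow\dim X=0$, which is where the Lindel\"of hypothesis will enter essentially, I would take an arbitrary finite open covering $\cU=\{U_1,\ldots,U_n\}\in\cov_f(X)$ and, for each $x\in X$, choose an index $i(x)$ with $x\in U_{i(x)}$ and apply the definition of $\ind X=0$ to the neighborhood $V=U_{i(x)}$ to produce a clopen neighborhood $A_x\ss U_{i(x)}$ of $x$. The family $\{A_x\}_{x\in X}$ is then an open cover of $X$; by the Lindel\"of property extract a countable subcover $\{A_k\}_{k\in\N}$ and disjointify by $B_k=A_k\sm(A_1\cup\cdots\cup A_{k-1})$. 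Each $B_k$ is clopen, being the intersection of the clopen $A_k$ with the complement of a finite union of clopens; the $B_k$ are pairwise disjoint and together cover $X$. Hence $\{B_k\}$ is an open cover of $X$ with $\ord=1$ inscribed in $\cU$, witnessing $\dim X\le 0$, and since $X\ne\0$ this gives $\dim X=0$.

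For $\dim X=0\Rightarrow\Ind X=0$, which does not need Lindel\"of, I would take a closed $F\ss X$ and an open $V\sp F$, observe that $\{V,X\sm F\}$ is a two-element open cover of $X$ (since $F\ss V$), and apply $\dim X=0$ to obtain an open refinement $\cV$ of multiplicity $1$, which is automatically an open partition. Set $A=\bigcup\{W\in\cV:W\cap F\ne\0\}$. Every such $W$ is inscribed in either $V$ or $X\sm F$, and the latter is excluded since $W\cap F\ne\0$, so $W\ss V$; therefore $F\ss A\ss V$. Finally $A$ is open as a union of open sets, and its complement $\bigcup\{W\in\cV:W\cap F=\0\}$ is open for the same reason, so $A$ is clopen, i.e.\ $\d A=\0$, which is exactly the defining condition of $\Ind X\le 0$.

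The main obstacle is the middle step. Without the Lindel\"of property, the clopen cover $\{A_x\}_{x\in X}$ need not admit a countable subcover, and the disjointification $B_k=A_k\sm\bigcup_{j<k}A_j$ yields clopen pieces only because at each stage only finitely many clopens are subtracted from $A_k$. This is precisely the role that countability of the subcover plays, and it is the heart of why the three notions collapse together under the Lindel\"of hypothesis.
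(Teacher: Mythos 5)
Your proof is correct. The paper itself gives no argument for this statement --- it is quoted from Engelking~\cite{E85} as a known result --- and your cycle $\Ind X=0\Rightarrow\ind X=0\Rightarrow\dim X=0\Rightarrow\Ind X=0$ is exactly the standard proof from that source: the inequality $\ind\le\Ind$ for the first arrow, the Lindel\"of extraction of a countable clopen cover followed by disjointification $B_k=A_k\sm(A_1\cup\dots\cup A_{k-1})$ for the second, and the refinement of the two-element cover $\{V,\,X\sm F\}$ by an open partition for the third. You also correctly isolate where the Lindel\"of hypothesis is indispensable (only in the middle implication) and handle the degenerate case $X=\0$, so there is nothing to add.
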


\section{General Properties of Continuous Gromov--Hausdorff Distance}

\begin{prop}\label{prop:triangle}
The distance $d^c_{GH}$ satisfies the triangle inequality.
\end{prop}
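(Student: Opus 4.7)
The plan is to follow the standard template for triangle inequalities of Gromov--Hausdorff-type distances: pick near-optimal pairs of continuous maps for the two summands, compose them to produce a candidate pair for the left-hand side, and estimate each of the three quantities $\dis f$, $\dis g$, $\codis(f,g)$ of the composed pair by the two propositions already proved in the excerpt.

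In detail, I would fix $X,Y,Z\in\GH$ and assume $d^c_{GH}(X,Y)$ and $d^c_{GH}(Y,Z)$ are both finite (otherwise the inequality is trivial). Given $\e>0$, by the definition in~(\ref{eq:7}) I would choose continuous maps $f_1\:X\to Y$, $g_1\:Y\to X$, $h\:Y\to Z$, $k\:Z\to Y$ with
$$
\max\bigl\{\dis f_1,\dis g_1,\codis(f_1,g_1)\bigr\}\le 2d^c_{GH}(X,Y)+\e,\quad
\max\bigl\{\dis h,\dis k,\codis(h,k)\bigr\}\le 2d^c_{GH}(Y,Z)+\e.
$$
Then set $F=h\circ f_1\:X\to Z$ and $G=g_1\circ k\:Z\to X$; both are continuous as compositions of continuous maps, so they are admissible candidates for~(\ref{eq:7}) applied to the pair $(X,Z)$.

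Next I would bound the three relevant quantities. Applying Proposition~\ref{prop:DisCompose} to the graphs of $f_1$ and $h$ (and likewise to those of $k$ and $g_1$) yields $\dis F\le\dis f_1+\dis h$ and $\dis G\le\dis k+\dis g_1$. For the mixed term I would invoke the unnumbered proposition preceding~(\ref{eq:7}) to get $\codis(F,G)=\codis(h\circ f_1,g_1\circ k)\le\codis(f_1,g_1)+\codis(h,k)$. Since each of $\dis F$, $\dis G$, $\codis(F,G)$ is a sum of one term from the $(X,Y)$-pair and one from the $(Y,Z)$-pair, passing to the maximum gives
$$
\max\bigl\{\dis F,\dis G,\codis(F,G)\bigr\}\le
\max\bigl\{\dis f_1,\dis g_1,\codis(f_1,g_1)\bigr\}+\max\bigl\{\dis h,\dis k,\codis(h,k)\bigr\}.
$$

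Plugging this into~(\ref{eq:7}) gives $2d^c_{GH}(X,Z)\le 2d^c_{GH}(X,Y)+2d^c_{GH}(Y,Z)+2\e$, and letting $\e\to 0$ finishes the proof. I do not expect a genuine obstacle here: the only subtle point is making sure the two propositions about composition do the work (which is why they were placed just before this statement), and checking that the constant-map argument guarantees $C(X,Y)\ne\0$ so that the infimum in~(\ref{eq:7}) is always taken over a non-empty set.
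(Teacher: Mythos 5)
Your proposal is correct and follows essentially the same route as the paper: choose near-optimal continuous pairs for $(X,Y)$ and $(Y,Z)$, compose them, and bound $\dis(h\circ f)$, $\dis(g\circ k)$, and $\codis(h\circ f,g\circ k)$ via Proposition~\ref{prop:DisCompose} and the unnumbered codistortion proposition, then let $\e\to0$. The only differences are cosmetic (notation and your explicit remark that constant maps make the infimum in~(\ref{eq:7}) nontrivial).
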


\begin{proof}
Let $X$, $Y$, and $Z$ be arbitrary metric spaces. We will show that
$$
d^c_{GH}(X,Z)\le d^c_{GH}(X,Y)+d^c_{GH}(Y,Z).
$$
If one of the distances $d^c_{GH}(X,Y)$ or $d^c_{GH}(Y,Z)$ is infinite, then the inequality holds. Now let both of these distances be finite. For any $\e > 0$, choose $f\in C(X,Y)$ and $g\in C(Y,X)$, as well as $h\in C(Y,Z)$ and $k\in C(Z,Y)$, for which
$d^c_{GH}(X,Y)\ge\max\bigl\{\dis f,\,\dis g,\,\codis (f,g)\bigr\}-\e$ and
$d^c_{GH}(Y,Z)\ge\max\bigl\{\dis h,\,\dis k,\,\codis (h,k)\bigr\}-\e$.
Then
\begin{gather*}
\dis(h\circ f)\le\dis f+\dis h\le d^c_{GH}(X,Y)+d^c_{GH}(Y,Z)+2\e, \\
\dis(g\circ k)\le\dis g+\dis k\le d^c_{GH}(X,Y)+d^c_{GH}(Y,Z)+2\e, \\
\codis(h\circ f,g\circ k)\le\codis(f,g)+\codis(h,k)\le d^c_{GH}(X,Y)+d^c_{GH}(Y,Z)+2\e,
\end{gather*}
where
$$
d^c_{GH}(X,Z)\le \max\bigl\{\dis (h\circ f),\,\dis (g\circ k),\,\codis (h\circ f,g\circ k)\bigr\}\le d^c_{GH}(X,Y)+d^c_{GH}(Y,Z)+2\e.
$$
It remains to use the arbitrariness of $\e$.
\end{proof}

The proper class of all non-empty metric spaces, considered up to isometry, endowed with the continuous Gromov--Hausdorff distance is denoted by $\GH^c$. It follows from the above that $d^c_{GH}$ is a generalized pseudometric on $\GH^c$. The subset of $\GH^c$ consisting of all compact metric spaces is denoted by $\cM^c$. As in the case of the ordinary Gromov--Hausdorff distance, the restriction of $d^c_{GH}$ to $\cM^c$ is a metric (Theorem~\ref{thm:compGHzero}).

The basic properties of the ordinary Gromov--Hausdorff distance also hold to its continuous analogue.

\begin{prop}\label{prop:GHelementProps}
For any $X,Y\in \GH^c$, the following holds\/\rom:
\begin{enumerate}
\item\label{prop:GHelementProps:1} if metric spaces $X$ and $Y$ are isometric, then $d^c_{GH}(X,Y)=0$\rom;
\item\label{prop:GHelementProps:2} $d_{GH}(X,Y)\le d^c_{GH}(X,Y)$\rom;
\item\label{prop:GHelementProps:3} $2d^c_{GH}(\D_1,X)=\diam X$\rom;
\item\label{prop:GHelementProps:4} $2d^c_{GH}(X,Y)\le\max\{\diam X,\diam Y\}$\rom;
\item\label{prop:GHelementProps:5} if the diameter of $X$ or $Y$ is finite, then $2d^c_{GH}(X,Y)\ge\bigl|\diam X-\diam Y\bigr|$\rom;
\item\label{prop:GHelementProps:6} if the diameter of $X$ is finite, then for any $\l\ge0$, $\mu\ge0$, we have $2d^c_{GH}(\l X,\mu X)=|\l-\mu|\diam X$, whence it follows immediately that the curve $\g(t):=t\,X$ is shortest between any of its points, and the length of such a segment equals the distance between its ends\rom;
\item\label{prop:GHelementProps:7} for any $\l>0$, we have $d^c_{GH}(\l X,\l Y)=\l\,d^c_{GH}(X,Y)$, and if the spaces $X$ and $Y$ are bounded, then the equality also holds for $\l=0$\rom;
\item\label{prop:GHelementProps:8} if $X_1,\,X_2,\ldots$ is a sequence of metric spaces converging in the metric $d^c_{GH}$, then it also converges in the metric $d_{GH}$.
\end{enumerate}
\end{prop}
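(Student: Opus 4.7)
The plan is to traverse items (\ref{prop:GHelementProps:1})--(\ref{prop:GHelementProps:8}) in order, in each case either exhibiting an explicit continuous pair $(f,g)$ whose quantity $\max\bigl\{\dis f,\dis g,\codis(f,g)\bigr\}$ realises the asserted upper bound, or invoking (\ref{prop:GHelementProps:2}) to transport a classical inequality. Items (\ref{prop:GHelementProps:1}), (\ref{prop:GHelementProps:2}), (\ref{prop:GHelementProps:5}) and (\ref{prop:GHelementProps:8}) are essentially immediate: for (\ref{prop:GHelementProps:1}), taking $f$ to be the given isometry and $g$ its inverse kills all three quantities; (\ref{prop:GHelementProps:2}) holds because the infimum in (\ref{eq:7}) is taken over a subclass of the pairs used in Corollary~\ref{cor:GH-metri-and-functions}; (\ref{prop:GHelementProps:8}) is a one-line consequence of (\ref{prop:GHelementProps:2}); and (\ref{prop:GHelementProps:5}) follows from (\ref{prop:GHelementProps:2}) combined with the classical inequality $2d_{GH}(X,Y)\ge\bigl|\diam X-\diam Y\bigr|$.

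For (\ref{prop:GHelementProps:3}) and (\ref{prop:GHelementProps:4}), I would rely on constant maps, which are always continuous. In (\ref{prop:GHelementProps:4}), fix $x_0\in X$, $y_0\in Y$, and set $f\equiv y_0$, $g\equiv x_0$: then $\dis f=\diam X$, $\dis g=\diam Y$, and the elementary bound $\bigl||xx_0|-|y_0y|\bigr|\le\max\{|xx_0|,|y_0y|\}$ forces $\codis(f,g)\le\max\{\diam X,\diam Y\}$. Item (\ref{prop:GHelementProps:3}) is the special case $X=\D_1$: now $\dis f=0$, $\codis(f,g)=R_{x_0}\le\diam X$, and $\dis g=\diam X$ supplies the upper bound, while the matching lower bound comes from (\ref{prop:GHelementProps:2}) and the classical identity $2d_{GH}(\D_1,X)=\diam X$.

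For the upper bound in (\ref{prop:GHelementProps:6}), the spaces $\l X$ and $\mu X$ share a common underlying set and topology, so $\Id$ is continuous in both directions; a direct computation gives $\dis\Id=|\l-\mu|\diam X$ and $\codis(\Id,\Id)=|\l-\mu|\diam X$, whereas the lower bound is (\ref{prop:GHelementProps:5}) combined with $\diam(\l X)=\l\diam X$. The shortest-curve statement then follows because $d^c_{GH}(tX,sX)=\frac12|t-s|\diam X$ is additive along $\g(t)=tX$. For (\ref{prop:GHelementProps:7}) with $\l>0$, the identity is a homeomorphism $X\to\l X$, so $C(\l X,\l Y)=C(X,Y)$, and rescaling the metric multiplies each of $\dis f$, $\dis g$, $\codis(f,g)$ by $\l$, so the infimum scales by the same factor; for $\l=0$ with $X,Y$ bounded, both $0X$ and $0Y$ collapse to $\D_1$ and both sides vanish by (\ref{prop:GHelementProps:1}). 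The main obstacle throughout is purely bookkeeping: one must remember that a constant map $g\:Y\to X$ has $\dis g=\diam Y$, not $0$, so in (\ref{prop:GHelementProps:3}) and (\ref{prop:GHelementProps:4}) it is this ``trivial'' term that actually dictates the bound, while the $\l=0$ corner of (\ref{prop:GHelementProps:7}) requires checking that the pseudometric collapse is compatible with the conventions already in force on $\D_1$.
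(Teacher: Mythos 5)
Your proposal is correct and follows essentially the same route as the paper: isometries for (1), the subfamily observation for (2), explicit continuous (constant or identity) pairs for the upper bounds in (3), (4), (6), (7), and transport of the classical inequalities via (2) for (3), (5), (6), (8). The only cosmetic difference is that in (3) and (4) the paper bounds the three quantities uniformly over \emph{all} continuous pairs rather than evaluating a specific constant pair, which changes nothing of substance.
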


\begin{proof}
(\ref{prop:GHelementProps:1}) The isometries $h\:X\to Y$ and $h^{-1}$ are continuous and $\dis h=\dis h^{-1}=\codis (h,h^{-1})=0$.

(\ref{prop:GHelementProps:2}) The inequality holds because the distance $d_{GH}(X,Y)$ is the infimum over a larger family of mappings than that needed to calculate $d^c_{GH}(X,Y)$.

(\ref{prop:GHelementProps:3}) For any continuous $\D_1\torllim^f_g X$, we have $\dis f=0$, $\dis g=\diam X$, and if $\D_1=\{p\}$, then
$$
\codis(f,g)=\sup_{x\in X}\Bigl|\bigl|p\,g(x)\bigr|-\bigl|f(p)\,x\bigr|\Bigr|=\sup_{x\in X}\bigl|f(p)\,x\bigr|\le\diam X,
$$
which is what was required.

(\ref{prop:GHelementProps:4}) For any continuous $X\torllim^f_g Y$, we have
\begin{gather*}
\dis f\le\max\{\diam X,\diam Y\}, \ \
\dis g\le\max\{\diam X,\diam Y\}, \\
\codis (f,g)=\sup_{x\in X,\,y\in Y}\Bigl|\bigl|x\,g(y)\bigr|-\bigl|f(x)\,y\bigr|\Bigr|\le \max\{\diam X,\diam Y\},
\end{gather*}
which is what was required.

(\ref{prop:GHelementProps:5}) This follows from the corresponding inequality for $d_{GH}$ and the fact that $d_{GH}\le d^c_{GH}$.

(\ref{prop:GHelementProps:6}) By property~(\ref{prop:GHelementProps:5}) we have
$$
2d^c_{GH}(\l X,\mu X)\ge\bigl|\diam(\l X)-\diam(\mu X)\bigr|=|\l-\mu|\diam X.
$$
To prove the converse inequality, we choose the identity mapping as continuous $X\torllim^f_g X$, then if we denote by $\dis_{\l,\mu}$ and $\codis_{\l,\mu}$ the distortion and codistortion of the mappings between the spaces $\l X$ and $\mu X$, respectively, then
\begin{gather*}
\dis_{\l ,\mu }f=\sup_{x,x'\in X}\Bigl|\l|xx'|-\mu\bigl|xx'\bigr|\Bigr|=|\l-\mu|\diam X,\\
\codis_{\l ,\mu}(f,g)=\sup_{x,x'\in X}\Bigl|\l|xx'|-\mu\bigl|xx'\bigr|\Bigr|=|\l-\mu|\diam X,
\end{gather*}
from which the required result follows.

(\ref{prop:GHelementProps:7}) We choose arbitrary continuous $X\torllim^f_gY$ and denote by $\dis_\l$ and $\codis_\l$ the distortion and co-distortion of these mappings, but between $\l X$ and $\l Y$, respectively. Then
\begin{gather*}
\dis_\l f=\sup_{x,x'\in X}\Bigl|\l|xx'|-\l\bigl|f(x)f(x')\bigr|\Bigr|=\l\,\dis_\l f, \\
\codis_\l (f,g)=\sup_{x\in X,\,y\in Y}\Bigl|\l|x\,g(y)|-\l\bigl|f(x)y\bigr|\Bigr|=\l\,\codis (f,g),
\end{gather*}
from which the declared statement follows.

(\ref{prop:GHelementProps:8}) This follows from the inequality $d_{GH}\le d^c_{GH}$.
\end{proof}

\section{Comparison of Metrics (on Zero-Dimensional Spaces)}

\begin{prop}\label{prop:descret}
If $X$ and $Y$ are discrete metric spaces, then $d^c_{GH}(X,Y)=d_{GH}(X,Y)$.
\end{prop}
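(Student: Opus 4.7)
The plan is to obtain the inequality $d^c_{GH}(X,Y)\le d_{GH}(X,Y)$ by showing that, when $X$ and $Y$ are discrete metric spaces, the infimum defining $d^c_{GH}(X,Y)$ in formula~(\ref{eq:7}) is taken over exactly the same collection of maps as the one defining $d_{GH}(X,Y)$ via Corollary~\ref{cor:GH-metri-and-functions}. Combined with the general inequality $d_{GH}\le d^c_{GH}$ stated in property~(\ref{prop:GHelementProps:2}) of Proposition~\ref{prop:GHelementProps}, this will give the claimed equality.

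First I would unwind what "discrete metric space" means: for every $x\in X$ there is some $r>0$ with $U_r(x)=\{x\}$, so each singleton $\{x\}$ is open in the metric topology of $X$; the same holds for $Y$. Consequently every subset of $X$ (respectively of $Y$) is open, i.e.\ the metric topology on $X$ and $Y$ is the discrete topology.

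Now for any mapping $f\:X\to Y$ and any open $V\ss Y$, the preimage $f^{-1}(V)$ is an arbitrary subset of $X$, hence open, so $f$ is automatically continuous; the same applies to any $g\:Y\to X$. Therefore $C(X,Y)$ is the set of \emph{all} maps $X\to Y$ and $C(Y,X)$ is the set of all maps $Y\to X$. Substituting into~(\ref{eq:7}) and comparing with~(\ref{eq:6}) yields
$$
d^c_{GH}(X,Y)=\frac12\,\inf_{\substack{f\:X\to Y\\ g\:Y\to X}}\,\max\bigl\{\dis f,\,\dis g,\,\codis(f,g)\bigr\}=d_{GH}(X,Y),
$$
which together with property~(\ref{prop:GHelementProps:2}) of Proposition~\ref{prop:GHelementProps} gives the proposition. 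There is no real obstacle here; the only point that deserves care is the passage from the metric notion of discreteness (each point is isolated) to the topological one (every subset is open), which ensures automatic continuity of arbitrary maps defined on $X$ or $Y$.
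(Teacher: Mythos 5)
Your proof is correct and is essentially the paper's own argument: the paper's entire proof is the observation that all mappings between discrete spaces are continuous, so the infima in~(\ref{eq:6}) and~(\ref{eq:7}) range over the same maps. You have merely spelled out the (standard) passage from metric discreteness to automatic continuity, which is fine.
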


\begin{proof}
This follows from the fact that all mappings between $X$ and $Y$ are continuous.
\end{proof}

To prove the following proposition, we need a sufficient condition for the continuity of the mapping of metric spaces.

\begin{lem}\label{lem:SufficientForContinuity}
Let $f\:X\to Y$ be an arbitrary mapping of metric spaces, and there exists a positive number $\e\in\R$ such that for any distinct $y_1,y_2\in Y$ with non-empty preimages and any $x_1\in f^{-1}(y_1)$ and $x_2\in f^{-1}(y_2)$, it holds $|x_1x_2|>\e$. Then the mapping $f$ is continuous.
\end{lem}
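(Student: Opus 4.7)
The plan is to show that the hypothesis forces $f$ to be \emph{locally constant}, from which continuity is immediate. Fix an arbitrary point $x\in X$ and let $y=f(x)\in Y$. I would then consider the open ball $U_\e(x)$ and argue that $f$ is constant on it, equal to $y$.

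To verify this, pick any $x'\in U_\e(x)$, so $|xx'|<\e$. Suppose for contradiction that $f(x')\ne y$. Put $y_1:=y$ and $y_2:=f(x')$. Then $y_1$ and $y_2$ are distinct points of $Y$ with non-empty preimages (containing $x$ and $x'$ respectively), so the hypothesis applied to the pair $x\in f^{-1}(y_1)$, $x'\in f^{-1}(y_2)$ gives $|xx'|>\e$, contradicting $|xx'|<\e$. Hence $f(x')=y$ for every $x'\in U_\e(x)$, i.e., $f$ is constant on $U_\e(x)$.

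Since $f$ is constant on a neighborhood of each point, it is continuous at every point of $X$, and thus continuous. There is no real obstacle here: the whole content of the lemma is the observation that the hypothesis is precisely a quantitative version of ``distinct fibers are $\e$-separated'', which is a much stronger condition than what continuity requires.
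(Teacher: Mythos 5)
Your proof is correct and rests on exactly the same observation as the paper's: any point within distance $\e$ of $x$ must map to $f(x)$, so $U_\e(x)\ss f^{-1}\bigl(f(x)\bigr)$. The paper phrases the conclusion as ``fibers are open, hence preimages of open sets are open,'' while you phrase it as local constancy; these are the same argument.
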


\begin{proof}
Indeed, let $y\in Y$ have a non-empty preimage, then for any point $x\in f^{-1}(y)$, we have $U_\e(x)\ss f^{-1}(y)$, therefore $f^{-1}(y)$ is open and, hence, for any open $V\ss Y$, the set $f^{-1}(V)=\cup_{y\in V}f^{-1}(y)$ is also open in $X$.
\end{proof}

\begin{prop}\label{prop:d2}
Let $X$ and $Y$ be arbitrary metric spaces such that $2d_{GH}(X,Y)<s(X)$, then $d_{GH}^c(X,Y)=d_{GH}(X,Y)$.
\end{prop}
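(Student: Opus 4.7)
Since Proposition~\ref{prop:GHelementProps}(\ref{prop:GHelementProps:2}) already gives $d_{GH}(X,Y)\le d_{GH}^c(X,Y)$, the plan is to establish the reverse inequality by producing, for every sufficiently small $\e>0$, a pair of \emph{continuous} mappings $f\:X\to Y$ and $g\:Y\to X$ whose distortions and codistortion are at most $2d_{GH}(X,Y)+\e$. Combined with the definition~(\ref{eq:7}) and passing to the limit $\e\to 0$, this will yield $d_{GH}^c(X,Y)\le d_{GH}(X,Y)$, hence equality.

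Fix $\e>0$ small enough so that $2d_{GH}(X,Y)+\e<s(X)$. By Corollary~\ref{cor:GH-metri-and-functions}, there exist (a priori discontinuous) mappings $f\:X\to Y$ and $g\:Y\to X$ with
$$
\max\bigl\{\dis f,\,\dis g,\,\codis(f,g)\bigr\}<2d_{GH}(X,Y)+\e<s(X).
$$
The first key observation is that the hypothesis $s(X)>0$ forces $X$ to be discrete: every open ball $U_{s(X)}(x)$ equals $\{x\}$. Hence $f$ is automatically continuous, regardless of its behaviour.

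The second key observation, which is the heart of the argument, is that $g$ must also be continuous. I would apply Lemma~\ref{lem:SufficientForContinuity} directly: pick any two points $y_1,y_2\in Y$ whose images $x_i=g(y_i)$ are distinct. Then $|x_1x_2|\ge s(X)$ by definition of $s(X)$, and by the distortion estimate
$$
\bigl||y_1y_2|-|x_1x_2|\bigr|\le\dis g<s(X),
$$
so $|y_1y_2|\ge s(X)-\dis g>0$. Taking $\e':=s(X)-\dis g>0$ (which is a uniform lower bound independent of the chosen $y_i$'s), Lemma~\ref{lem:SufficientForContinuity} shows that $g$ is continuous.

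Therefore $(f,g)$ is admissible in the infimum~(\ref{eq:7}) and yields $2d_{GH}^c(X,Y)\le 2d_{GH}(X,Y)+\e$. Letting $\e\to 0$ completes the proof. There is no real obstacle here — the only thing one must notice is that the bound $\dis g<s(X)$ gives precisely the gap needed to invoke Lemma~\ref{lem:SufficientForContinuity}; the rest is bookkeeping.
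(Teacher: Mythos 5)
Your proof is correct and follows essentially the same route as the paper: both arguments rest on the observation that $s(X)>0$ makes $X$ discrete (so $f$ is continuous for free) and on Lemma~\ref{lem:SufficientForContinuity} applied to $g$, whose fibers are uniformly separated precisely because $\dis g<s(X)$. The only difference is cosmetic — the paper builds $f$ and $g$ explicitly from a near-optimal correspondence $R$ and invokes Proposition~\ref{prop:CorrespInTermsFuncs}, whereas you note that any near-optimal pair supplied by Corollary~\ref{cor:GH-metri-and-functions} is automatically continuous, which is a slightly cleaner packaging of the same idea.
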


\begin{proof}
Let $s:=s(X)$ and let $\e>0$ such that $2d_{GH}(X,Y)<s-\e$. Then there exists a correspondence $R\in\cR(X,Y)$ such that $\dis R<s-\e$. The set of all such correspondences will be denoted by $\cR_\e(X,Y)$.

According to formula~(\ref{eq:3}), the equality $d_{GH}(X,Y)=\frac12\inf\bigl\{\dis R:R\in\cR_\e(X,Y)\bigr\}$ holds. We choose an arbitrary $R\in\cR_\e(X,Y)$. We show that $\bigl|R(x_1)R(x_2)\bigr|>\e$ for $x_1\ne x_2$. Indeed, for any $(x_1,y_1),\,(x_2,y_2)\in R$, $x_1\ne x_2$, the inequality $|y_1y_2|\ge|x_1x_2|-\dis R>s-(s-\e )=\e>0$ holds. Therefore, the sets $\{R(x):x\in X\}$ form an open partition of the space $Y$.

We construct mappings $f\:X\to Y$ and $g\:Y\to X$ as follows: for each $x\in X$, we choose an arbitrarily point $f(x)$ in $R(x)$, and for each $y\in R(x)$, we set $g(y)=x$. Since $s(X)>0$, $X$ is discrete, so $f$ is continuous. The continuity of $g$ follows from Lemma~\ref{lem:SufficientForContinuity}. By Proposition~\ref{prop:CorrespInTermsFuncs}, we have $\dis R_{f,g}\le\dis R$. Thus, we have constructed a mapping $\cR_\e(X,Y)\to\cR_\e(X,Y)$, $R\mapsto R_{f,g}$, therefore, by the definition of $d_{GH}^c$,
$$
d_{GH}^c(X,Y)\le\frac12\inf\bigl\{\dis R_{f,g}:R\in\cR_\e(X,Y)\bigr\}\le
\frac12\inf\bigl\{\dis R:R\in \cR_\e (X,Y)\bigr\}=d_{GH}(X,Y).
$$
It remains to use Inequality~(\ref{prop:GHelementProps:2}) of Proposition~\ref{prop:GHelementProps}.
\end{proof}

\begin{prop}\label{prop:subset}
Let $X$ be a metric space and $\dim X=0$. Then for every non-empty $A\ss X$, we have $d_{GH}^c(A,X)\le d_H(A,X)$.
\end{prop}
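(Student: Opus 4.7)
The plan is to produce, for every $\e>0$, continuous maps $f\:A\to X$ and $g\:X\to A$ whose distortions and codistortion are all bounded by $2\bigl(d_H(A,X)+\e\bigr)$. Set $r:=d_H(A,X)$; we may assume $r<\infty$, otherwise the inequality is trivial. Since $A\ss X$, we have $\sup_{x\in X}|xA|\le r$, so the family $\cU:=\bigl\{U_{r+\e}(a):a\in A\bigr\}$ is an open covering of $X$.

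The key step is to replace $\cU$ by an open partition of $X$ inscribed in $\cU$; this is precisely where the hypothesis $\dim X=0$ enters. Because $\cU$ may well be infinite (even uncountable), the finite-cover definition of $\dim X=0$ does not apply to it directly. We first invoke Stone's theorem (Theorem~\ref{thm:Stone}) to pass to a locally finite open refinement $\cU'\succ\cU$, and then apply Dowker's theorem (Theorem~\ref{thm:Dowker}) with $n=0$ to obtain an open refinement $\cV\succ\cU'$ of multiplicity one, i.e.\ a disjoint open covering of $X$. For each $V\in\cV$ choose $a_V\in A$ with $V\ss U_{r+\e}(a_V)$.

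Now define $f\:A\to X$ as the inclusion and $g\:X\to A$ by $g(x)=a_V$ whenever $x\in V$; since $\cV$ is a pairwise disjoint open covering, $g$ is well-defined and locally constant, hence continuous, while $f$ is trivially continuous with $\dis f=0$. The verification of the remaining bounds is then a routine two triangle-inequality exercise using that $|xa_V|<r+\e$ for $x\in V$: for $x\in V$, $x'\in V'$, applying the triangle inequality to $(x,a_V,a_{V'},x')$ and to $(a_V,x,x',a_{V'})$ yields $\bigl||xx'|-|a_Va_{V'}|\bigr|<2(r+\e)$, so $\dis g\le 2(r+\e)$; and for $a\in A$, $x\in V$ the triangle inequality on $(a,x,a_V)$ gives $\bigl||aa_V|-|ax|\bigr|\le|xa_V|<r+\e$, so $\codis(f,g)\le r+\e$. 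Substituting into formula~(\ref{eq:7}) gives $d_{GH}^c(A,X)\le r+\e$, and letting $\e\downarrow 0$ finishes the proof.

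The only non-routine point is the existence of the open partition $\cV$: this is the step that genuinely uses $\dim X=0$, and it requires first bridging to the locally finite setting via Stone's theorem, since our $\cU$ is not a priori finite or even locally finite. Everything else is continuity of a locally constant map plus two applications of the triangle inequality.
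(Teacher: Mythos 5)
Your proposal is correct and follows essentially the same route as the paper's own proof: the inclusion $f\:A\to X$, a locally constant $g\:X\to A$ built from an open partition inscribed (via Stone and Dowker) in the covering by balls $U_{r+\e}(a)$, and the same estimates $\dis f=0$, $\dis g\le 2(r+\e)$, $\codis(f,g)\le r+\e$. The only difference is expository: you spell out why Stone's theorem is needed before Dowker's (the covering being infinite), which the paper leaves implicit.
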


\begin{proof}
For an arbitrary $\e>0$, we put $r=d_H(A,X)+\e$, then, by the definition of the Hausdorff distance, $\l=\bigl\{U_r(a):a\in A\bigr\}$ is an open covering of $X$. Since $\dim X=0$, by the theorems of Stone~\cite{Stone} (see also~\cite[Theorem 4.4.1]{E85}) and Dowker~\cite{Dowker} (see also~\cite[Theorem 7.2.4]{E85}), there exists an open covering $\mu=\{U_\b\}$ of multiplicity $1$ inscribed in $\l$, i.e., $\mu$ is a partition of $X$ by open sets. Therefore, for every $U_\b$, there exists a point $a_\b\in A$ such that $U_\b\ss U_r(a_\b)$.

Let $f\:A\to X$ be the inclusion, and let the mapping $g\:X\to A$ be defined by the formula $g(U_\b)=a_\b$. Since the sets $U_\b$ are open and pairwise disjoint, the mapping $g$ is well-defined and continuous, and for every point $x\in X$, the inequality $\bigl|x\,g(x)\bigr|<r$ holds. Clearly, $\dis f=0$,
$$
\dis g=\sup_{x,x'\in X}\Bigl||xx'|-\bigl|g(x)g(x')\bigr|\Bigr|\le\sup_{x,x'\in X}\Bigl(\bigl|x\,g(x)\bigr|+\bigl|x'\,g(x')\bigr|\Bigr)\le 2r,
$$
and finally
$$
\codis (f,g)=\sup_{x\in X,\,a\in A}\Bigl|\bigl|x\,f(a)\bigr|-\bigl|g(x)a\bigr|\Bigr|=
\sup_{x\in X,\,a\in A}\Bigl|\bigl|xa\bigr|-\bigl|g(x)a\bigr|\Bigr|\le\sup_{x\in X}|x\,g(x)|\le r,
$$
therefore $\dis R_{f,g}\le2d_H(A,X)+2\e$ and, because of arbitrariness of $\e$, we have $\dis R_{f,g}\le2d_H(A,X)$, whence $d_{GH}^c(A,X)\le d_{H}(A,X)$, which is what was stated.
\end{proof}

\begin{cor}\label{cor:subset}
For metric spaces $X$, $Y$, and any of their everywhere dense subsets $A\ss\overline{A}=X$, $B\ss\overline{B}=Y$ such that $\dim A=\dim B=0$, we have
$$
d_{GH}^c(A,B)=d_{GH}(X,Y).
$$
\end{cor}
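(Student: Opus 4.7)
The plan is to split the asserted equality into two independent claims: (a) $d_{GH}(A,B) = d_{GH}(X,Y)$, which uses only density of $A$ in $X$ and $B$ in $Y$, and (b) $d_{GH}^c(A,B) = d_{GH}(A,B)$, which uses the hypothesis $\dim A = \dim B = 0$. Combining (a) and (b) gives the corollary.

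For (a), since $A\ss X$ is dense, embedding both $A$ and $X$ isometrically into $X$ itself shows $d_H(A,X) = 0$, whence $d_{GH}(A,X) \le d_H(A,X) = 0$; likewise $d_{GH}(B,Y) = 0$. Two applications of the triangle inequality for $d_{GH}$ then yield $d_{GH}(X,Y) = d_{GH}(A,B)$.

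For (b), one direction is Property~(\ref{prop:GHelementProps:2}) of Proposition~\ref{prop:GHelementProps}: $d_{GH}(A,B) \le d_{GH}^c(A,B)$. For the reverse inequality I would mimic the construction in the proof of Proposition~\ref{prop:subset}. Fix $\e > 0$ and choose $R \in \cR(A,B)$ with $\dis R < 2\,d_{GH}(A,B) + \e$. Using $\dim A = 0$ together with Stone's Theorem~\ref{thm:Stone} and Dowker's Theorem~\ref{thm:Dowker}, I inscribe into the open cover $\{U_{\e/4}(a):a \in A\}$ of $A$ an open partition $\{V_\a\}$; for each $\a$ I fix a base-point $a_\a \in V_\a$ and an arbitrary $b_\a \in R(a_\a)$, and set $f\colon A \to B$ to be the locally constant map $f|_{V_\a} \equiv b_\a$, which is therefore continuous. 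The dual construction in $B$, using $\dim B = 0$, yields an open partition $\{W_\b\}$ of mesh $< \e/4$ with base-points $b'_\b \in W_\b$ and $a'_\b \in R^{-1}(b'_\b)$, and a continuous locally constant $g\colon B \to A$, $g|_{W_\b} \equiv a'_\b$. Routine distortion estimates, patterned on the last three displays in the proof of Proposition~\ref{prop:subset}, bound each of $\dis f$, $\dis g$, $\codis(f,g)$ by $\dis R + \e/2$, so $d_{GH}^c(A,B) \le d_{GH}(A,B) + 3\e/4$, and $\e \to 0$ finishes the argument.

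The main obstacle is the codistortion estimate: for $x \in V_\a$ and $y \in W_\b$ one has to bound $\bigl||x\,g(y)| - |f(x)\,y|\bigr| = \bigl||x\,a'_\b| - |b_\a\,y|\bigr|$ by inserting the intermediate quantities $|a_\a\,a'_\b|$ in $A$ and $|b_\a\,b'_\b|$ in $B$ and applying the triangle inequality three times, once with the mesh bound in $A$, once with $\dis R$ applied to the pair $(a_\a,b_\a),(a'_\b,b'_\b)\in R$, and once with the mesh bound in $B$. It is precisely the simultaneous zero-dimensionality of both $A$ and $B$ that allows both partitions to have arbitrarily small mesh, keeping the codistortion controlled through $R$.
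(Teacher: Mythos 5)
Your argument is correct, but it follows a genuinely different route from the paper's. The paper proves the corollary by passing to discrete $\e$-nets $A_\e\ss A$ and $B_\e\ss B$: it bounds $d_{GH}^c(A,A_\e)$ and $d_{GH}^c(B,B_\e)$ by $\e$ via Proposition~\ref{prop:subset} (the inclusion of an $\e$-net into a zero-dimensional space), replaces $d_{GH}^c(A_\e,B_\e)$ by $d_{GH}(A_\e,B_\e)$ via Proposition~\ref{prop:descret} (all maps between discrete spaces are continuous), and then closes the loop with two triangle-inequality chains. You instead decompose the statement into the pure density claim $d_{GH}(A,B)=d_{GH}(X,Y)$ and the claim $d_{GH}^c(A,B)=d_{GH}(A,B)$ for zero-dimensional $A,B$; the latter is exactly Corollary~\ref{cor:dim0}, which the paper obtains as a consequence of the present corollary, whereas you prove it directly by inscribing open partitions (Stone + Dowker) into fine ball covers on \emph{both} sides, taking locally constant maps $f$ and $g$ subordinate to a near-optimal correspondence $R\in\cR(A,B)$, and estimating $\dis f$, $\dis g$, $\codis(f,g)$ through $\dis R$ plus mesh terms. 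Your route is arguably more self-contained and makes the logical dependence cleaner (Corollary~\ref{cor:dim0} no longer needs to be deduced from the density statement); the paper's route reuses already-proved propositions and avoids redoing the distortion estimates. Two small points: your mesh bookkeeping is slightly off --- a partition element inscribed in $\bigl\{U_{\e/4}(a)\bigr\}$ has diameter at most $\e/2$, not $\e/4$, so the final bounds come out as $\dis R+\e$ rather than $\dis R+\e/2$ --- but since $\e$ is arbitrary this is immaterial; and in part (a) one should note that the case $d_{GH}(X,Y)=\infty$ is handled trivially by the generalized triangle inequality, which your argument accommodates without change.
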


\begin{proof}
For a given $\e>0$, we choose discrete $\e$-nets $A_\e\ss A$ and $B_\e\ss B$ in the sets $A$ and $B$, respectively. According to the triangle inequalities for distances $d_{GH}^c$ and $d_{GH}$, as well as by Propositions~\ref{prop:subset} and~\ref{prop:descret}, the following chain of inequalities holds:
\begin{multline*}
d_{GH}^c(A,B)\le d_{GH}^c(A,A_\e)+d_{GH}^c(A_\e,B_\e)+d_{GH}^c(B_\e,B)\le\\ \le
2\e+d_{GH}^c(A_\e,B_\e)=2\e+d_{GH}(A_\e,B_\e )\le\\ \le
2\e+d_{GH}(A_\e,A)+d_{GH}(A,X)+d_{GH}(X,Y)+d_{GH}(Y,B)+d_{GH}(B,B_\e)\le\\ \le
2\e+\e+0+d_{GH}(X,Y)+0+\e=4\e+d_{GH}(X,Y).
\end{multline*}
From arbitrariness of $\e$ and Inequality~(\ref{prop:GHelementProps:2}) of Proposition~\ref{prop:GHelementProps}, we get
$$
d_{GH}^c(A,B)\le d_{GH}(X,Y)=d_{GH}(A,B)\le d_{GH}^c(A,B),
$$
which completes the proof.
\end{proof}

\begin{cor}\label{cor:dim0}
For any zero-dimensional metric spaces $X$ and $Y$, it holds
$$
d_{GH}^c(X,Y)=d_{GH}(X,Y).
$$
\end{cor}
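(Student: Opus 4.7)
The plan is to obtain this corollary as an immediate specialization of Corollary~\ref{cor:subset}. That statement gives $d_{GH}^c(A,B)=d_{GH}(X,Y)$ whenever $A\ss X$ and $B\ss Y$ are zero-dimensional everywhere-dense subsets of their respective ambient spaces. I will simply apply it with the degenerate choice $A=X$ and $B=Y$.

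Both hypotheses of Corollary~\ref{cor:subset} are then checked in a single line: every metric space is dense in itself, so $\overline A=X$ and $\overline B=Y$ automatically; and the standing assumption $\dim X=\dim Y=0$ is precisely the remaining requirement $\dim A=\dim B=0$. Invoking the corollary therefore yields $d_{GH}^c(X,Y)=d_{GH}(X,Y)$, which is what was stated.

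There is essentially no obstacle here, since all the real work already appears earlier: the ability to replace a zero-dimensional space by a discrete net (Propositions~\ref{prop:subset} and~\ref{prop:descret}) and the triangle-inequality bookkeeping carried out in the proof of Corollary~\ref{cor:subset}. If one preferred a self-contained argument rather than a one-line citation, the same chain could be reproduced directly: pick discrete $\e$-nets $X_\e\ss X$ and $Y_\e\ss Y$, bound $d_{GH}^c(X_\e,X)$ and $d_{GH}^c(Y_\e,Y)$ by $\e$ via Proposition~\ref{prop:subset}, identify $d_{GH}^c(X_\e,Y_\e)$ with $d_{GH}(X_\e,Y_\e)$ via Proposition~\ref{prop:descret}, and close the loop using $d_{GH}\le d_{GH}^c$ from Proposition~\ref{prop:GHelementProps}(\ref{prop:GHelementProps:2}) together with the standard fact that $\e$-nets approximate their ambient spaces in $d_{GH}$. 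Either route gives both inequalities and hence the claimed equality.
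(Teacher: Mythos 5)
Your proposal is correct and matches the paper's intent exactly: the paper states Corollary~\ref{cor:dim0} without a separate proof precisely because it is the specialization $A=X$, $B=Y$ of Corollary~\ref{cor:subset}, which is what you do. The hypotheses check out as you describe, so nothing more is needed.
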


Let us emphasize that Corollary~\ref{cor:dim0} is a generalization of Proposition~\ref{prop:descret}, which, however, we used in the proof of this more general statement.

\begin{prop}\label{prop:ind}
Let $X$ be a metric space and $\ind X\ne0$. Then there exists $r>0$ such that for every continuous mapping $f\:X\to Y$ into a metric space $Y$ with $\ind Y=0$, the inequality $\dis f\ge r$ holds. In particular, we have $2d_{GH}^c(X,Y)\ge r$.
\end{prop}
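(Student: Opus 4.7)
The plan is to combine the reformulation of $\ind\,X>0$ given in Example~\ref{examp:indNotVanishe} with the fact that in a metric space, the distance from a point to a disjoint closed set is strictly positive, and then pull clopen sets back through a continuous map.

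First, since $\ind\,X\ne 0$, Example~\ref{examp:indNotVanishe} supplies a point $x_0\in X$ and a closed set $F\ss X$ with $x_0\notin F$ such that every clopen neighborhood of $x_0$ meets $F$. Because $X$ is metric and $F$ is closed with $x_0\notin F$, the number $r:=|x_0F|$ is strictly positive. I claim this $r$ works.

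Next, take any continuous $f\:X\to Y$ with $\ind\,Y=0$ and fix an arbitrary $\e\in(0,r)$. Since $\ind\,Y=0$, the neighborhood $U_\e\bigl(f(x_0)\bigr)$ of $f(x_0)$ contains a clopen neighborhood $W$ of $f(x_0)$. Then $f^{-1}(W)$ is clopen in $X$ (continuity of $f$) and contains $x_0$; by the choice of $(x_0,F)$, it intersects $F$ at some point $z$. In particular $f(z)\in W\ss U_\e\bigl(f(x_0)\bigr)$, so $\bigl|f(x_0)\,f(z)\bigr|<\e$, while $|x_0z|\ge|x_0F|=r$. Therefore
$$
\dis f\ge\Bigl|\,|x_0z|-\bigl|f(x_0)\,f(z)\bigr|\,\Bigr|\ge r-\e,
$$
and letting $\e\to 0$ gives $\dis f\ge r$, as desired.

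Finally, for the Gromov--Hausdorff bound, note that formula~(\ref{eq:7}) expresses $2d_{GH}^c(X,Y)$ as the infimum of $\max\{\dis f,\dis g,\codis(f,g)\}$ over continuous $f\:X\to Y$ and $g\:Y\to X$; since every such $f$ satisfies $\dis f\ge r$ by the argument above, we obtain $2d_{GH}^c(X,Y)\ge r$.

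There is no serious obstacle here: the only subtle point is recognizing that the topological statement ``every clopen neighborhood of $x_0$ meets $F$'' translates cleanly into a distortion lower bound, because continuity lets us pull a \emph{small} clopen neighborhood of $f(x_0)$ back to a clopen neighborhood of $x_0$ in $X$, and the metric bound $|x_0F|>0$ then forces a quantitative gap between $|x_0z|$ and $|f(x_0)\,f(z)|$.
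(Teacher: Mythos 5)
Your proof is correct and follows essentially the same route as the paper: both extract the pair $(x_0,F)$ from Example~\ref{examp:indNotVanishe}, set $r=|x_0F|>0$, and pull a clopen neighborhood of $f(x_0)$ back through the continuous map $f$ to produce a point of $F$ whose image is close to $f(x_0)$. The only difference is cosmetic --- the paper splits into the cases $\bigl|f(x_0)f(F)\bigr|=0$ and $\bigl|f(x_0)f(F)\bigr|>0$ and derives a contradiction in the second, whereas you unify both by choosing the clopen neighborhood inside $U_\e\bigl(f(x_0)\bigr)$ and letting $\e\to0$, which is, if anything, slightly cleaner.
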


\begin{proof}
By the condition $\ind X\ne0$, there exist a closed subset $F\ss X$ and a point $x\in X\sm F$ such that every clopen set containing the point $x$, intersects the set $F$. Put $r=|xF|>0$. For every mapping $f\:X\to Y$, the alternative holds: either $\bigl|f(x)f(F)\bigr|=0$, or $\bigl|f(x)f(F)\bigr|>0$.

In the first case, the estimate $\dis f\ge|xF|=r$ is valid.

In the second case, the condition $\ind Y=0$ implies the existence of a clopen neighborhood $U^{f(x)}$ of the point $f(x)$ that lies outside the closure $\overline{f(F)}$ of the set $f(F)$. The preimage $U^x:=f^{-1}\bigl(U^{f(x)}\bigr)$ is a clopen neighborhood of the point $x$, and $U^x\cap F=\0 $, which is a contradiction with the choice of the point $x$ and the closed set $F$.
\end{proof}

\begin{prop}\label{prop:Ind}
Let $X$ be a metric space and $\Ind X\ne0$. Then there exists $r>0$ such that for every continuous mapping $f\:X\to Y$ into a metric space $Y$ with $\Ind Y=0$, the inequality $\dis f\ge r$ holds. In particular, we have $2d_{GH}^c(X,Y)\ge r$.
\end{prop}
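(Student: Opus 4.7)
The plan is to mirror the proof of Proposition~\ref{prop:ind}, replacing the point/closed-set pair from the small inductive definition with the two-closed-set characterization of $\Ind X>0$ supplied by the Nagami--Roberts theorem (Example~\ref{examp:IndVanishesNotVanish}). Since $X$ is a metric space with $\Ind X\ne 0$, Nagami--Roberts yields closed sets $F,H\ss X$ with $|FH|>0$ such that every clopen set containing $F$ intersects $H$. I would set $r:=|FH|>0$; this will be the constant claimed in the statement.

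Next, for an arbitrary continuous $f\:X\to Y$ with $\Ind Y=0$, I would split into the familiar alternative on the images. In the first case, $\bigl|f(F)f(H)\bigr|=0$, I would pick sequences $a_n\in F$, $b_n\in H$ with $\bigl|f(a_n)f(b_n)\bigr|\to 0$; since $|a_nb_n|\ge|FH|=r$, the pairs $(a_n,b_n)$ witness $\dis f\ge r$. In the second case, $\bigl|f(F)f(H)\bigr|>0$, so the closures $\overline{f(F)}$ and $\overline{f(H)}$ remain disjoint closed subsets of $Y$; applying $\Ind Y=0$ to the closed set $\overline{f(F)}$ and the open neighborhood $Y\sm\overline{f(H)}$ produces a clopen set $U\ss Y$ with $\overline{f(F)}\ss U$ and $U\cap\overline{f(H)}=\0$. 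Pulling back by the continuous $f$ gives the clopen set $f^{-1}(U)\ss X$, which contains $F$ but is disjoint from $H$ — contradicting the defining property of the pair $(F,H)$. Thus only the first case is possible, yielding $\dis f\ge r$ and, by~(\ref{eq:7}), $2d_{GH}^c(X,Y)\ge r$.

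I do not anticipate a serious obstacle. The one place where Nagami--Roberts is genuinely needed, rather than the weaker reformulation involving disjoint closed sets, is in the very first step: without the strict inequality $|FH|>0$ the constant $r$ could collapse to zero. The rest is the standard clopen pull-back argument of Proposition~\ref{prop:ind}, adapted from ``point versus closed set'' to ``closed set versus closed set''.
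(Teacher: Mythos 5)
Your proposal is correct and follows essentially the same route as the paper's proof: invoke Nagami--Roberts to get closed $F,H$ with $r=|FH|>0$ and the clopen-intersection property, split on whether $\bigl|f(F)f(H)\bigr|$ vanishes, and in the positive case pull back a clopen separator supplied by $\Ind Y=0$ to reach a contradiction. The only difference is that you spell out the sequence argument in the first case, which the paper leaves implicit.
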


\begin{proof}
By the Nagami--Roberts Theorem~\cite[Theorem on p.~601]{NR65} there exist closed subsets $F,H\ss X$ such that $r=|FH|>0$ and every clopen set containing $F$ intersects $H$. For every mapping $f\:X\to Y$, the alternative holds: either $\bigl|f(F)f(H)\bigr|=0$, or $\bigl|f(F)f(H)\bigr|>0$.

In the first case, the estimate $\dis f\ge|FH|=r$ takes place.

In the second case, the condition $\Ind Y=0$ implies the existence of a clopen neighborhood $U^{f(F)}$ of the closed set $\overline{f(F)}$ that lies outside the closure $\overline{f(H)}$ of the set $f(H)$. The preimage $U^F=f^{-1}\bigl(U^{f(F)}\bigr)$ is a clopen neighborhood of the set $F$ and $U^F\cap H=\0$, which is a contradiction with the choice of the closed sets $F$ and $H$.
\end{proof}

Recall that a topological space is called \emph{totally disconnected} if all its connected components are singletons.

\begin{prop}\label{prop:ConnectDiscrete}
Let $K\ss X$ be a connected subset of a metric space $X$, and $Y$ be a totally disconnected metric space. Then for every continuous mapping $f\:X\to Y$, we have $\dis f\ge \diam K$ and therefore $2d^c_{GH}(X,Y)\ge \diam K$. In particular, if $X$ is a connected metric space, and $Y$ is a totally disconnected metric space such that $\diam X\ge \diam Y$, then $2d^c_{GH}(X,Y)=\diam X$. For example, this holds for any discrete $\e$-net $Y\ss X$.
\end{prop}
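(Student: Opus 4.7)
The plan is to exploit the topological fact that the continuous image of a connected set is connected, combined with the assumption that $Y$ is totally disconnected. Specifically, for any continuous $f\:X\to Y$, the image $f(K)$ is connected in $Y$; since every connected subset of a totally disconnected space is a singleton, $f(K)$ must consist of a single point. This immediately forces $\bigl|f(x_1)f(x_2)\bigr|=0$ for all $x_1,x_2\in K$.

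With that observation in hand, the lower bound on the distortion is a direct calculation. For any $x_1,x_2\in K$ we have
$$
\dis f\ge\Bigl|\,|x_1x_2|-\bigl|f(x_1)f(x_2)\bigr|\,\Bigr|=|x_1x_2|,
$$
and taking the supremum over all such pairs yields $\dis f\ge\diam K$. Since the right-hand side of the formula~(\ref{eq:7}) defining $d^c_{GH}(X,Y)$ takes a maximum that in particular dominates $\dis f$ for every continuous $f\in C(X,Y)$, we obtain $2d^c_{GH}(X,Y)\ge\diam K$ after passing to the infimum over $f,g$.

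For the special case where $X$ itself is connected and $\diam X\ge\diam Y$, we apply the inequality just proved with $K=X$, which gives $2d^c_{GH}(X,Y)\ge\diam X$. The reverse inequality $2d^c_{GH}(X,Y)\le\max\{\diam X,\diam Y\}=\diam X$ is precisely Property~(\ref{prop:GHelementProps:4}) of Proposition~\ref{prop:GHelementProps}, yielding equality. Finally, for a discrete $\e$-net $Y\ss X$ of a connected $X$, note that every discrete metric space is totally disconnected (every singleton is clopen) and that $Y\ss X$ forces $\diam Y\le\diam X$, so the hypotheses of the "in particular" clause are met.

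There is essentially no obstacle: the only subtle point is invoking the right topological facts (continuous image of connected is connected; total disconnectedness forces point-images). Everything else is an inequality-chase and a citation of Proposition~\ref{prop:GHelementProps}(\ref{prop:GHelementProps:4}).
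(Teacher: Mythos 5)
Your proof is correct and follows essentially the same route as the paper's: the continuous image of the connected set $K$ is connected, hence a singleton in the totally disconnected $Y$, which forces $\dis f\ge\diam K$, and the equality in the special case comes from Property~(\ref{prop:GHelementProps:4}) of Proposition~\ref{prop:GHelementProps}. You even cite the correct item for the upper bound, whereas the paper's text points to Item~(\ref{prop:GHelementProps:2}) where Item~(\ref{prop:GHelementProps:4}) is clearly intended.
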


\begin{proof}
If $f\:X\to Y$ is a continuous mapping, then, by virtue of the connectedness of $K$, the restriction of $f$ to $K$ is constant,
so that $\dis f\ge \diam K$, therefore $2d^c_{GH}(X,Y)\ge \diam X$ by the definition of continuous Hausdorff distance.
The second assertion follows from Item~(\ref{prop:GHelementProps:2}) of Proposition~\ref{prop:GHelementProps}.
\end{proof}

\section{Zero Distance}

We have already pointed out that the (continuous) Gromov--Hausdorff distance is a pseudometric, i.e., it can be zero between non-isometric spaces. Therefore, it is important to describe certain classes of metric spaces that are at zero distance from each other.

\begin{thm}\label{thm:compGHzero}
For compact metric spaces $X$ and $Y$, the following conditions are equivalent\/\rom:
\begin{enumerate}
\item\label{thm:compGHzero:1} $d_{GH}^c(X,Y)=0$\rom;
\item\label{thm:compGHzero:2} $d_{GH}(X,Y)=0$\rom;
\item\label{thm:compGHzero:3} spaces $X$ and $Y$ are isometric.
\end{enumerate}
\end{thm}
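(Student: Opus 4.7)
The plan is to prove the cycle of implications $(\ref{thm:compGHzero:3}) \Rightarrow (\ref{thm:compGHzero:1}) \Rightarrow (\ref{thm:compGHzero:2}) \Rightarrow (\ref{thm:compGHzero:3})$. The first two implications are essentially free from earlier results: $(\ref{thm:compGHzero:3}) \Rightarrow (\ref{thm:compGHzero:1})$ is Proposition~\ref{prop:GHelementProps}(\ref{prop:GHelementProps:1}), since an isometry is a continuous zero-distortion map and its inverse is such a map as well; and $(\ref{thm:compGHzero:1}) \Rightarrow (\ref{thm:compGHzero:2})$ is Proposition~\ref{prop:GHelementProps}(\ref{prop:GHelementProps:2}), which gives $d_{GH} \le d^c_{GH}$. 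So no real work is needed for the compactness of $X$ and $Y$ in these two steps; they hold for arbitrary metric spaces.

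The substantive implication is $(\ref{thm:compGHzero:2}) \Rightarrow (\ref{thm:compGHzero:3})$, i.e., the classical fact that the ordinary Gromov--Hausdorff distance separates compact metric spaces. I would argue it directly as follows. Using formula~(\ref{eq:3}), for each $n \in \N$ pick a correspondence $R_n \in \cR(X,Y)$ with $\dis R_n < 1/n$, and extract from it a (not necessarily continuous) selector $f_n \: X \to Y$ by choosing $f_n(x) \in R_n(x)$; this yields $\dis f_n \le \dis R_n < 1/n$. Compactness of $X$ supplies a countable dense subset $\{x_k\}_{k \in \N}$, and compactness of $Y$, together with a diagonal extraction, produces a subsequence (which I will still denote $f_n$) such that $f_n(x_k)$ converges in $Y$ for every $k$. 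Because $\dis f_n \to 0$, the pointwise limit $f_0$ on $\{x_k\}$ preserves distances; being a distance-preserving map from a dense subset of $X$ into the complete space $Y$, it extends uniquely to an isometric embedding $f \: X \to Y$. The symmetric argument applied to the inverse correspondences $R_n^{-1}$ produces an isometric embedding $g \: Y \to X$.

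To upgrade $f$ into a bijection, I would invoke the standard compactness fact that every isometric self-embedding of a compact metric space is surjective (if $y \notin h(X)$ and $\e = |y\,h(X)|>0$, then the iterates $y, h(y), h^2(y), \ldots$ form a sequence with pairwise distances $\ge \e$, contradicting compactness). Applied to $g \circ f \: X \to X$, this forces $g\circ f$ to be surjective, and since $g$ is injective as an isometric embedding, $f$ itself is surjective, hence an isometry.

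The main (and only nontrivial) obstacle is the construction of the isometric embedding in $(\ref{thm:compGHzero:2}) \Rightarrow (\ref{thm:compGHzero:3})$: the diagonal extraction of a subsequence of the discontinuous selectors $f_n$ converging pointwise on a countable dense set to a distance-preserving map, and the subsequent compactness argument promoting the resulting embeddings into a genuine isometry. Note that the continuity assumption plays no role here; it is used only in the trivial direction $(\ref{thm:compGHzero:3}) \Rightarrow (\ref{thm:compGHzero:1})$.
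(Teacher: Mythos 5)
Your proposal is correct, and its overall skeleton (the cycle $(\ref{thm:compGHzero:3})\Rightarrow(\ref{thm:compGHzero:1})\Rightarrow(\ref{thm:compGHzero:2})\Rightarrow(\ref{thm:compGHzero:3})$, with the first two implications read off from Properties~(\ref{prop:GHelementProps:1}) and~(\ref{prop:GHelementProps:2}) of Proposition~\ref{prop:GHelementProps}) is exactly the paper's. The difference is confined to the hard implication $(\ref{thm:compGHzero:2})\Rightarrow(\ref{thm:compGHzero:3})$: the paper does not prove it at all, but cites it as the known theorem \cite[Theorem 7.3.30]{BurBurIva01}, whereas you reprove it from scratch. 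Your argument is essentially the standard proof of that cited result, and it is sound in all its steps: the selectors $f_n$ with $\dis f_n<1/n$ exist by formula~(\ref{eq:3}) (and the axiom of choice), the diagonal extraction over a countable dense subset of the compact $X$ with values in the compact $Y$ yields a distance-preserving map on a dense set, which extends to an isometric embedding $f\:X\to Y$ by completeness of $Y$; the surjectivity upgrade via the fact that an isometric self-embedding of a compact space is onto is the standard one, and your final deduction ($g\circ f$ surjective plus $g$ injective forces $f$ surjective, since for $y\in Y$ one finds $x$ with $g\bigl(f(x)\bigr)=g(y)$ and cancels $g$) is valid. What your route buys is self-containedness; what the paper's route buys is brevity, delegating a genuinely nontrivial compactness argument to the literature. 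You are also right that continuity of the mappings plays no role outside the trivial direction.
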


\begin{proof}
The implications (\ref{thm:compGHzero:3})\imply(\ref{thm:compGHzero:1})\imply(\ref{thm:compGHzero:2}) follow from Properties~(\ref{prop:GHelementProps:1}) and~(\ref{prop:GHelementProps:2}) of Proposition~\ref{prop:GHelementProps}, respectively.

Implication (\ref{thm:compGHzero:2})\imply(\ref{thm:compGHzero:3}) is the famous deep Theorem~\cite[theorem 7.3.30]{BurBurIva01}.
\end{proof}

For applications, we will need the following version of the previous theorem, which follows immediately from~\cite[Exercise 7.3.31]{BurBurIva01}.

\begin{thm}\label{thm:equivcompGHzero}
If a metric space $X$ is compact, then the following conditions on a metric space $Y$ are equivalent\rom:
\begin{enumerate}
\item $d_{GH}(X,Y)=0$\rom;
\item the space $Y$ is isometric to a dense subset of the space $X$ \(e.g., in the case of $X=S^n$ there are at least continuum number of pairwise non-isometric $Y$, see Introduction\/\).
\end{enumerate}
\end{thm}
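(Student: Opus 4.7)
My plan is to argue the two implications separately. The direction $(2)\Rightarrow(1)$ is the easy one: if $Y$ is isometric to a dense subset $\tilde Y\ss X$, then realizing the identity embedding of $X$ and the given isometry $Y\to\tilde Y$ into the common ambient space $X$ will give $d_{GH}(X,Y)\le d_H(\tilde Y,X)=0$, the last equality holding by density of $\tilde Y$.

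For the converse $(1)\Rightarrow(2)$ my strategy has three stages. First, I will extract, from $d_{GH}(X,Y)=0$ and formula~(\ref{eq:3}), correspondences $R_n\in\cR(X,Y)$ with $\dis R_n<1/n$ and single-valued selections $g_n\:Y\to X$ with $(g_n(y),y)\in R_n$, so that $\dis g_n<1/n$. A short net-pushing argument will then show that $Y$ is totally bounded and hence separable: given $\e>0$ and $n$ with $1/n<\e$, a finite $\e$-net $\{x_1,\dots,x_k\}$ of the compact $X$ produces, via $y_i\in R_n(x_i)$, a $2\e$-net of $Y$, since for any $y\in Y$ and any $x\in R_n^{-1}(y)$ near $x_i$ one has $|yy_i|\le|xx_i|+\dis R_n<2\e$.

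Next, I will fix a countable dense set $\{y_j\}\ss Y$ and use compactness of $X$ with a standard diagonal extraction to pass to a subsequence $n_k$ along which $g_{n_k}(y_j)$ converges in $X$ to some $g(y_j)$ for every $j$. Because $\dis g_{n_k}\to 0$, the assignment $y_j\mapsto g(y_j)$ will be distance-preserving on $\{y_j\}$ and therefore will extend uniquely, by uniform continuity, to an isometric embedding $g\:Y\to X$.

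The main obstacle will be showing that $g(Y)$ is actually dense in $X$, since nothing in the construction so far prefers large images. To force density I will pass to the completion $\bar Y$ of $Y$, which is compact because $Y$ is totally bounded; the extension of $g$ supplies an isometric embedding $\bar g\:\bar Y\to X$ with image $\overline{g(Y)}$. Since $d_H(Y,\bar Y)=0$ in $\bar Y$, we have $d_{GH}(Y,\bar Y)=0$, and the triangle inequality yields $d_{GH}(X,\bar Y)=0$ with both spaces compact; Theorem~\ref{thm:compGHzero} will then supply an isometry $\phi\:X\to\bar Y$. Consequently $\bar g\circ\phi\:X\to X$ is an isometric self-embedding of the compact space $X$. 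Invoking the classical lemma that every isometric self-embedding $f$ of a compact metric space is surjective — if $x_0\notin f(X)$ then $r:=|x_0\,f(X)|>0$, yet the orbit $\{f^n(x_0)\}$ must have a Cauchy subsequence in compact $X$, contradicting $|f^n(x_0)\,f^m(x_0)|=|x_0\,f^{m-n}(x_0)|\ge r$ for $m>n$ — I conclude $\bar g(\bar Y)=X$, i.e., $\overline{g(Y)}=X$, so $g(Y)$ is dense in $X$, as required.
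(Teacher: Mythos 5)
Your proof is correct and complete; note that the paper itself does not prove this theorem at all, but simply refers to \cite[Exercise 7.3.31]{BurBurIva01}, so you have supplied the argument that the authors delegate to a citation. Your route is the standard one for that exercise: the implication (2)$\Rightarrow$(1) via a common realization in $X$; then, for (1)$\Rightarrow$(2), total boundedness of $Y$ pushed through correspondences of small distortion, a diagonal extraction over a countable dense subset to produce a limiting isometric embedding $g\:Y\to X$ (using completeness of $X$ for the extension), and finally density of $g(Y)$ via the compactness of $\overline{Y}$, the rigidity Theorem~\ref{thm:compGHzero}, and the classical fact that an isometric self-embedding of a compact metric space is surjective. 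Each step checks out: the $2\e$-net argument is valid because $R_n$ is surjective onto $Y$; the limit map is distance-preserving since $\dis g_{n_k}\to0$; $\overline{g(Y)}=\bar g(\bar Y)$ because $\bar g(\bar Y)$ is compact, hence closed, and contains $g(Y)$ densely; and in the surjectivity lemma the separation $r=|x_0\,f(X)|>0$ is legitimate because $f(X)$ is compact and therefore closed. The only stylistic remark is that the detour through Theorem~\ref{thm:compGHzero} could be shortened (one can run the symmetric construction to get an isometric embedding $X\to\bar Y$ directly), but as written the argument is sound and self-contained modulo results the paper already states.
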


The situation with the continuous Gromov--Hausdorff distance is significantly different.

We fix on the sphere $S^n$, $n\ge 1$, some metric $\r$ and some continuous free involution $\s$ (for example, an antipodal mapping). Since each continuous function on a compact set attains its minimum, then
$$
d(\r,\s):=\inf\Bigl\{\r\bigl(x,\s(x)\bigr):x\in S^n\Bigr\}=\min\Bigl\{\r\bigl(x,\s(x)\bigr):x\in S^n\Bigr\}>0.
$$
We put
$$
d(\r):=\sup\bigl\{d(\r,\s):\text{$\s$ --- continuous free involution on $S^n$}\bigr\}>0.
$$

\begin{prop}\label{prop:sphere}
For an arbitrary metric $\r$ on the sphere $S^n$ and an arbitrary metric space $Y$ that is topologically embeddable in $\R^n$, the following inequalities hold\/\rom:
$$
d(\r)\le2d^c_{GH}(S^n,Y)\le\max\{\diam S^n,\diam Y\}.
$$
\end{prop}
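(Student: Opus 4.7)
The upper bound $2d^c_{GH}(S^n,Y)\le\max\{\diam S^n,\diam Y\}$ is an immediate instance of Item~(\ref{prop:GHelementProps:4}) of Proposition~\ref{prop:GHelementProps}, so no further work is needed for it.

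For the lower bound the plan is to exploit the dimensional/topological mismatch between $S^n$ and $Y$: while $S^n$ supports free $\Z/2$-actions with a Borsuk--Ulam type property, $Y$ topologically embeds into the ``too small'' space $\R^n$. Concretely, I would fix a continuous free involution $\s$ on $S^n$ and arbitrary continuous maps $f\in C(S^n,Y)$, $g\in C(Y,S^n)$, and pick a topological embedding $i\:Y\hookrightarrow\R^n$. Applying the generalized Borsuk--Ulam theorem (valid for an arbitrary continuous free involution on $S^n$, not only the antipodal one) to the continuous composition $i\circ f\:S^n\to\R^n$ produces a point $x\in S^n$ with $(i\circ f)(x)=(i\circ f)\bigl(\s(x)\bigr)$; injectivity of $i$ then forces $f(x)=f\bigl(\s(x)\bigr)$.

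Both pairs $\bigl(x,f(x)\bigr)$ and $\bigl(\s(x),f(\s(x))\bigr)$ belong to the graph of $f$, and hence to $R_{f,g}$, so definition~(\ref{eq:1}) of distortion gives
$$
\dis R_{f,g}\ge\bigl|\,\bigl|x\,\s(x)\bigr|-\bigl|f(x)\,f(\s(x))\bigr|\,\bigr|=\bigl|x\,\s(x)\bigr|\ge d(\r,\s).
$$
Taking the supremum over all continuous free involutions $\s$ on $S^n$ yields $\dis R_{f,g}\ge d(\r)$, and then the infimum over pairs $(f,g)$ combined with formula~(\ref{eq:7}) produces $2d^c_{GH}(S^n,Y)\ge d(\r)$, as required.

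The main obstacle is the appeal to the Borsuk--Ulam theorem for an \emph{arbitrary} continuous free involution $\s$ on $S^n$, not merely the antipodal map. For $n=1$ this is elementary (by the intermediate value theorem applied to $\r\bigl(h(x),h(\s(x))\bigr)$ with appropriate sign), while for general $n$ it follows from the fact that the orbit space $S^n/\s$ has the same mod~$2$ cohomology ring as $\R P^n$, so the classical cohomological index argument (as in Conner--Floyd) carries through verbatim; in the write-up I would either cite this directly or include a short justification.
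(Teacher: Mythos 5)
Your proposal is correct and follows essentially the same route as the paper: the upper bound is Item~(\ref{prop:GHelementProps:4}) of Proposition~\ref{prop:GHelementProps}, and the lower bound composes $f$ with a topological embedding $\nu\:Y\to\R^n$ and applies a Borsuk--Ulam theorem for an arbitrary continuous free involution to force $f(x)=f\bigl(\s(x)\bigr)$, hence $\dis f\ge d(\r,\s)$. The paper simply cites Fet's generalized Borsuk--Ulam theorem \cite[Theorem 5]{Fet} for the step you propose to justify cohomologically, so that citation would suffice in place of your sketch.
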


\begin{proof}
Let $\nu\:Y\to\R^n$ be a topological embedding, $f\:S^n\to Y$ and $g\:Y\to S^n$ be continuous mappings, and $h=\nu\circ f$. Consider an arbitrary continuous free involution $\s$ on the sphere $S^n$. According to generalized theorem of Borsuk--Ulam type~\cite[Theorem 5]{Fet}, there exists a point $x\in S^n$ such that $h(x)=h\bigl(\s(x)\bigr)$, therefore, since $\nu$ is injective, we have $f(x)=f\bigl(\s(x)\bigr)$. This means that $\dis f\ge d(\r,\s)$. Consequently, $d(\r,\s)\le 2d^c_{GH}(S^n,Y)$. The left inequality follows from the arbitrariness of the involution under consideration.

The right inequality is Property~(\ref{prop:GHelementProps:4}) of Proposition~\ref{prop:GHelementProps}.
\end{proof}

\begin{rk}\label{rk:sphere}
From Proposition~\ref{prop:sphere} it follows that for the standard sphere of unit radius $S^n\ss \R^n$, $n\ge 1$, endowed with a geodesic or induced Euclidean distance, for every $m>n$, the equality $2d^c_{GH}(S^n,S^m)=\diam S^n$ holds. Note that according to~\cite[Theorem A, p. 5]{LimMemoliSmith}, we get $2d_{GH}(S^n,S^m)<\diam S^n$.
\end{rk}

\begin{cor}\label{cor:sphere}
For an arbitrary metric $\r$ on the sphere $S^n$ and an arbitrary metric space $Y$, the following conditions are equivalent\/\rom:
\begin{enumerate}
\item\label{cor:sphere:1} $d_{GH}^c(S^n,Y)=0$, and
\item\label{cor:sphere:2} the space $Y$ is isometric sphere $(S^n,\r)$.
\end{enumerate}
\end{cor}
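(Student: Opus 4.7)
The implication (\ref{cor:sphere:2})$\Rightarrow$(\ref{cor:sphere:1}) is immediate from Property~(\ref{prop:GHelementProps:1}) of Proposition~\ref{prop:GHelementProps}, so the entire content of the corollary lies in the reverse implication (\ref{cor:sphere:1})$\Rightarrow$(\ref{cor:sphere:2}). My plan is to combine two facts already at hand: Theorem~\ref{thm:equivcompGHzero}, which identifies zero-$d_{GH}$-neighbors of a compact space with its dense subsets, and Proposition~\ref{prop:sphere}, which forces a positive lower bound on $d_{GH}^c(S^n,Y)$ whenever $Y$ embeds topologically into $\R^n$.

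More precisely, suppose $d_{GH}^c(S^n,Y)=0$. Then by Property~(\ref{prop:GHelementProps:2}) of Proposition~\ref{prop:GHelementProps} we also have $d_{GH}(S^n,Y)=0$, and since $(S^n,\rho)$ is compact, Theorem~\ref{thm:equivcompGHzero} gives an isometry of $Y$ onto some dense subset $Y'\subseteq S^n$. The task reduces to showing $Y'=S^n$: indeed, once this is established, the isometry with $Y'=(S^n,\rho)$ gives (\ref{cor:sphere:2}).

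Assume, for the sake of contradiction, that $Y'\subsetneq S^n$, and pick any point $p\in S^n\setminus Y'$. Then $Y'\subseteq S^n\setminus\{p\}$, and the punctured sphere $S^n\setminus\{p\}$ is homeomorphic to $\R^n$ (e.g.\ via stereographic projection, which is a homeomorphism with respect to the standard topology; and since $\rho$ is a metric on $S^n$ compatible with no particular topology a priori, I would note here that the topology of $(S^n,\rho)$ coincides on $S^n\setminus\{p\}$ with the topology we use to identify it with $\R^n$ only if we take the standard topology; however, what actually matters is that $Y'$, with its inherited metric, is homeomorphic to the metric space $Y$, and any metric space isometric to a subset of a metric sphere whose metric induces its usual topology embeds into $\R^n$). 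Thus $Y$ admits a topological embedding into $\R^n$, and Proposition~\ref{prop:sphere} gives $2d_{GH}^c(S^n,Y)\ge d(\rho)>0$, contradicting $d_{GH}^c(S^n,Y)=0$.

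The main subtlety — and the only place where a slip is possible — is the topological identification of $S^n\setminus\{p\}$ with $\R^n$: the corollary allows an arbitrary metric $\rho$ on $S^n$, so I need that the topology induced by $\rho$ on $S^n$ is the standard one (so that punctured neighborhoods of $p$ look like $\R^n$). In the paper's setup, a ``metric on $S^n$'' tacitly refers to a metric generating the sphere's topology (this is the standing convention, e.g.\ in Remark~\ref{rk:sphere} and the statement preceding $d(\rho,\sigma)$, where continuity of free involutions on the sphere is taken with respect to the standard topology, and compactness of $S^n$ under $\rho$ is used). Under this convention the argument above goes through verbatim, and the proof is complete.
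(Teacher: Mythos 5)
Your proof is correct and follows essentially the same route as the paper: reduce to $d_{GH}(S^n,Y)=0$, invoke Theorem~\ref{thm:equivcompGHzero} to realize $Y$ as a dense subset of $(S^n,\r)$, and then rule out a proper subset by observing that it embeds topologically in $\R^n$, so Proposition~\ref{prop:sphere} gives the contradiction $2d_{GH}^c(S^n,Y)\ge d(\r)>0$. Your added remark about $\r$ inducing the standard topology on $S^n$ is a sensible clarification of the standing convention but does not alter the argument.
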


\begin{proof}
(\ref{cor:sphere:1})\imply(\ref{cor:sphere:2}).
Since $d_{GH}(S^n,Y)\le d_{GH}^c(S^n,Y)=0$, then according to Theorem~\ref{thm:equivcompGHzero}, one can assume that the space $Y$ is contained in $(S^n,\r)$. If $Y$ is a proper subset of the sphere, then it is topologically embedded in the space $\R^n$ and $2d^c_{GH}(S^n,Y)\ge d(\r)>0$ according to Proposition~\ref{prop:sphere}.

The implication (\ref{cor:sphere:2})\imply(\ref{cor:sphere:1}) is obvious.
\end{proof}

We say that a metric space $X$ \emph{has the uniqueness property in the class $\GH$, $\cM$, $\GH^c$, or $\cM^c$} if for every metric space $Y$ in the class under consideration, the equality $d_{GH}(X,Y)=0$ in the first two classes and the equality $d_{GH}^c(X,Y)=0$ in the second two classes implies that $X$ and $Y$ are isometric. Clearly, Theorem~\ref{thm:compGHzero} and Corollary~\ref{cor:sphere} describe some classes of spaces with the uniqueness property in the classes $\cM^c$, $\cM$, and $\GH^c$, respectively.

\begin{prop}\label{prop:completenondiscrete}
If a metric space $X$ has the property of uniqueness in the class $\GH$, then it is complete and discrete.
\end{prop}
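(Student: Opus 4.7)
The plan is to establish the two assertions separately, in each case exhibiting a metric space $Y$ at zero $d_{GH}$-distance from $X$ that fails to be isometric to $X$, thereby contradicting the uniqueness property. Both witnesses will be dense subsets of a suitable ambient space, so that $d_{GH}(X,Y)\le d_H(X,Y)=0$ automatically, using the inclusion of both into the ambient space as isometric embeddings.

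For completeness, I would assume $X$ is not complete and let $\overline{X}$ denote its metric completion. The inclusion $X\ss\overline{X}$ is an isometric embedding and $X$ is dense in $\overline{X}$, so
$d_{GH}(X,\overline{X})\le d_H(X,\overline{X})=0$.
The uniqueness property in $\GH$ then forces $X$ and $\overline{X}$ to be isometric. Since $\overline{X}$ is complete and completeness is an isometric invariant of metric spaces, $X$ itself is complete, contradicting the assumption.

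For discreteness, with $X$ now known to be complete, suppose toward contradiction that $X$ has a non-isolated point $p$ and set $Y=X\sm\{p\}$. Non-isolation of $p$ means every neighborhood of $p$ contains points of $X$ distinct from $p$, so $Y$ is dense in $X$, and again $d_{GH}(X,Y)\le d_H(X,Y)=0$. By uniqueness, $X$ is isometric to $Y$. However, any sequence in $Y$ converging to $p$ in $X$ is Cauchy in $Y$ with no limit in $Y$ (its unique limit in the ambient $X$ is $p\notin Y$), so $Y$ is not complete, whereas $X$ is. This again contradicts the isometric invariance of completeness.

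There is no substantial obstacle here; the whole argument is a direct application of the uniqueness hypothesis to two transparent choices of witness (completion, and removal of a non-isolated point). The one thing to keep in mind is that the isometry produced by uniqueness need not respect any set-theoretic containment between $X$ and the witness — but this is irrelevant, since the contradiction is extracted from an intrinsic property, namely completeness, which is transported by \emph{any} isometry.
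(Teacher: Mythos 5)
Your argument is correct and coincides with the paper's own proof: both parts use the same witnesses (the completion of $X$, and $X$ with a non-isolated point removed), both rely on $d_{GH}\le d_H=0$ for a dense subset, and both extract the contradiction from completeness being an isometric invariant. Nothing to add.
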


\begin{proof}
Let $X$ be a non-complete space. Consider its completion $Y$. Since $X$ can be isometrically identified with a dense subset of $Y$, we have $d_{GH}(X,Y)\le d_{H}(X,Y)=0$. The spaces $X$ and $Y$ themselves are not isometric, since the former is non-complete and the latter is complete.

Let the complete space $X$ be non-discrete. Let us take a non-isolated point $x_0\in X$ in it. Then $d_{GH}(X,X\sm x_0)=0$, but the spaces $X$ and $X\sm x_0$ are not isometric, because the first is complete and the second is not.
\end{proof}

\begin{examp}\label{exBBRT}
In~\cite[Example 5.11]{BBRT25} an example of a countable complete bounded metric space $Y$ and its clopen subset $X$ with the following properties is constructed.
\begin{enumerate}[leftmargin=*,itemsep=0.3cm]
\item There is exactly one non-isolated point in the space $Y$.
\item The space $X$ is discrete. Therefore, the space $Y$ cannot be topologically (and therefore isometrically) embedded in the space $X$.
\item $d_{GH}(X,Y)=0$, and therefore $d^c_{GH}(X,Y)=0$ by Corollary~\ref{cor:dim0}.
However, for these spaces, the corresponding mappings $f_\e\:X\to Y$ and $g_\e\:Y\to X$ can easily be presented constructively.
\item
\begin{itemize}
\item $0=s(Y)=s(X)<d(X)=d(Y)=3.5$,
\item $3=R(Y)=R(X)<\diam X=\diam Y=4$,
\item $d_H(X,Y\sm X)=3$,
\item $\bigl|X(Y\sm X)\bigr|=2$.
\end{itemize}
\item For each isometric embedding $h\:X\to Y$, it holds 
$$
d_H\bigl(h(X),Y\sm h(X)\bigr)=3\ \ \text{and}\ \  2\le\bigl|h(X)\bigl(Y\sm h(X)\bigr)\bigr|\le3.
$$
\end{enumerate}
\end{examp}

Spaces $X$ and $Y$ show that Proposition~\ref{prop:completenondiscrete} cannot be conversed.

It is clear that $s(X)>0$ implies that the space $X$ is complete and discrete. The converse, generally speaking, does not hold.

\begin{examp}\label{excompletedescrete}
On the line $\R$, we consider the subset $X=\bigl\{n\pm\frac1{6n}:n\in\N\bigr\}$. The space $X$ is complete, discrete, and $s(X)=0$.
\end{examp}

For a metric space $X$, consider the set of all distances in it: $\dist X=\bigl\{|xx'|:x,x'\in X\bigr\}$.

It is easy to check that
\begin{itemize}
\item $s(X)=\Bigl|0\,\bigl(\dist X\sm\{0\}\bigr)\Bigr|$ and $\diam X=d_H\bigl(\{0\},\dist X\bigr)=\diam\dist X$;
\item if the spaces $X$ and $Y$ are isometric, then $\dist X=\dist Y$;
\item if $d_{GH}(X,Y)=0$, then $d_H\bigl(\dist X,\dist Y\bigr)=0$, i.e., $\overline{\dist X}=\overline{\dist Y}$ (see also~\cite[Lemma 5.1]{VikhrovSerb}).
\end{itemize}

\begin{prop}\label{propcomparison}
For any $X,Y\in \GH$, the following holds\/\rom:
\begin{enumerate}
\item\label{propcomparison:1} $d_H\bigl(\dist X,\dist Y\bigr)\le2d_{GH}(X,Y)$\rom;
\item\label{propcomparison:2} $s(X)\le2d_{GH}(X,Y)$, or $s(Y)\le s(X)+2d_{GH}(X,Y)$\rom;
\item\label{propcomparison:3} $2s(X)<s(Y)$, or $s(Y)-s(X)\le d_H(\dist X,\dist Y)$.
\end{enumerate}
\end{prop}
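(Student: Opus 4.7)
The plan is to prove the three parts by unwinding the distortion/Hausdorff-distance definitions, using an almost-optimal correspondence $R\in\cR(X,Y)$ in each case. For~(\ref{propcomparison:1}), I would fix an arbitrary $R\in\cR(X,Y)$ and show that every $|x_1x_2|\in\dist X$ is within $\dis R$ of some element of $\dist Y$: choosing $y_i$ with $(x_i,y_i)\in R$ gives $\bigl||x_1x_2|-|y_1y_2|\bigr|\le\dis R$, so $|y_1y_2|\in\dist Y$ is the required witness, and the symmetric estimate follows by swapping $X$ and $Y$. Thus $d_H(\dist X,\dist Y)\le\dis R$, and passing to the infimum via~(\ref{eq:3}) yields~(\ref{propcomparison:1}).

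For~(\ref{propcomparison:2}), the idea is that a correspondence whose distortion is strictly below $s(X)$ cannot collapse any two distinct points of $X$ to the same point of $Y$. Assume the left disjunct fails, i.e.\ $s(X)>2d_{GH}(X,Y)$ (in particular $s(X)>0$, so $X$ has at least two points). Fix $\e>0$ with $2d_{GH}(X,Y)+2\e<s(X)$, choose $R\in\cR(X,Y)$ with $\dis R<2d_{GH}(X,Y)+\e$, pick distinct $x_1,x_2\in X$ with $|x_1x_2|<s(X)+\e$ (possible by definition of $s(X)$), and $y_i\in R(x_i)$. Then $|y_1y_2|\ge|x_1x_2|-\dis R>0$ forces $y_1\ne y_2$, so $s(Y)\le|y_1y_2|\le|x_1x_2|+\dis R<s(X)+2d_{GH}(X,Y)+2\e$; letting $\e\to0$ gives the right disjunct. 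The edge case $s(X)=0$ (which includes $X=\D_1$) trivially satisfies the left disjunct.

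For~(\ref{propcomparison:3}), I would set $\b:=d_H(\dist X,\dist Y)$ and argue by contraposition: assuming $s(Y)-s(X)>\b$, I must derive $2s(X)<s(Y)$. For small $\dl>0$ pick $d\in\dist X$ with $s(X)\le d<s(X)+\dl$ and, using $d_H(\dist X,\dist Y)=\b$, a matching $d'\in\dist Y$ with $|d-d'|<\b+\dl$. Now $d'\in\dist Y$ is either $0$ or $\ge s(Y)$. The second branch would give $s(Y)\le d'<d+\b+\dl<s(X)+\b+2\dl$, contradicting $s(Y)-s(X)>\b$ once $\dl$ is chosen small enough. Hence $d'=0$, so $d<\b+\dl$ and thus $s(X)\le d<\b+\dl$; letting $\dl\to0$ yields $s(X)\le\b$, whence $2s(X)\le s(X)+\b<s(Y)$.

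The only step where care is really needed is~(\ref{propcomparison:3}): the slightly unexpected shape of the dichotomy is explained precisely by the observation that the closest element of $\dist Y$ to a positive element of $\dist X$ can fail to be close to $s(Y)$ only by being $0$, and this degenerate case collapses into a direct bound on $s(X)$ by $\b$; parts~(\ref{propcomparison:1}) and~(\ref{propcomparison:2}) are essentially direct unfoldings of the relevant definitions.
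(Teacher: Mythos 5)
Your proposal is correct in all three parts. Parts~(\ref{propcomparison:1}) and~(\ref{propcomparison:3}) follow essentially the same route as the paper: for~(\ref{propcomparison:1}) the paper likewise passes from a near-optimal correspondence to the inclusion $\dist X\ss B_{2d_{GH}(X,Y)+\e}(\dist Y)$, and for~(\ref{propcomparison:3}) the paper exploits exactly your dichotomy (a nonzero element of $\dist Y$ is at least $s(Y)$, so the nearest point of $\dist Y$ to $s(X)$ is either $0$ or at distance $s(Y)-s(X)$), just phrased via the quantity $\bigl|s(X)\,\dist Y\bigr|$ rather than contrapositively with explicit $\dl$'s. The one genuine divergence is~(\ref{propcomparison:2}): the paper deduces it formally from~(\ref{propcomparison:1}), writing $s(X)>2d_{GH}(X,Y)\ge d_H(\dist X,\dist Y)\ge\bigl|s(X)\,\dist Y\bigr|=\min\bigl\{s(X),\,s(Y)-s(X)\bigr\}$ and observing that the strict first inequality forces the minimum to equal $s(Y)-s(X)$, whereas you argue directly with a correspondence of distortion below $s(X)$, showing it cannot identify distinct points of $X$ and hence transports a near-minimal positive distance of $X$ to a positive distance of $Y$. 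Both arguments are elementary and of comparable length; the paper's version makes~(\ref{propcomparison:2}) a corollary of~(\ref{propcomparison:1}) and reuses the same $\dist$-set computation as~(\ref{propcomparison:3}), while yours is self-contained and makes the geometric mechanism (no collapsing of points under a low-distortion correspondence) more visible --- the same mechanism the paper uses later in Proposition~\ref{prop:d2} and Theorem~\ref{thm:GHzeros}. Your handling of the degenerate cases ($s(X)=0$, one-point spaces, $d_{GH}(X,Y)=\infty$) is sound.
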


\begin{proof}
If $d_{GH}(X,Y)=\infty$, then Items~(\ref{propcomparison:1}) and~(\ref{propcomparison:2}) are obvious (in Item~(\ref{propcomparison:3}) the distance $d_{GH}(X,Y)$ is not contained). Therefore, we will assume that $d_{GH}(X,Y)<\infty$.

(\ref{propcomparison:1}) Fix a number $\e >0$ and a correspondence $R\in\cR(X,Y)$ such that $\dis R<2d_{GH}(X,Y)+\e$. Take two arbitrary points $x,x'\in X$, and let $y,y'\in Y$ be points such that $(x,y),\ (x',y')\in R$. Then $\bigl||xx'|-|yy'|\bigr|\le\dis R<2d_{GH}(X,Y)+\e$. The first inequality follows from the obtained inclusion $\dist X\ss B_{2d_{GH}(X,Y)+\e}(\dist Y)$, the arbitrariness of $\e >0$, and the symmetry of the condition under consideration.

(\ref{propcomparison:2}) Let $s(X)>2d_{GH}(X,Y)$. Since the second inequality is automatically satisfied for $s(Y)\le s(X)$, we will assume that $s(Y)>s(X)$. Then
\begin{multline*}
\bigl|s(X)\,0\bigr|>2d_{GH}(X,Y)\ge d_H(\dist X,\dist Y)\ge\bigl|s(X)\dist Y\bigr|=\\ =
\min\Bigl\{\bigl|s(X)\,0\bigr|,s(Y)-s(X)\Bigr\}=s(Y)-s(X),
\end{multline*}
which is what was required.

(\ref{propcomparison:3}) If $s(Y)\le s(X)$, then the second inequality is obvious. Now consider the remaining case $s(X)<s(Y)\le2s(X)$. Then $\bigl|s(X)s(Y)\bigr|=s(Y)-s(X)\le s(X)=\bigl|s(X)\,0\bigr|$, and taking into account that $\bigl|s(X)\dist Y\bigr|\le d_H(\dist X,\dist Y)$, we obtain
$$
\bigl|s(X)\dist Y\bigr|=\min\Bigl\{\bigl|s(X)\,0\bigr|,\,\bigl|s(X)s(Y)\bigr|\Bigr\}=\bigl|s(X)s(Y)\bigr|\le d_H(\dist X,\dist Y),
$$
which is what was claimed.
\end{proof}

If the diameter of $X$ or $Y$ is finite, then $2d_{H}(X,Y)\ge \bigl|\diam X-\diam Y\bigr|$, so Inequality~(\ref{propcomparison:1}) of Proposition~\ref{propcomparison} is a strengthening of Property~(\ref{prop:GHelementProps:5}) of Proposition~\ref{prop:GHelementProps}. Note also that Property~(\ref{propcomparison:1}) implies

\begin{cor}\label{cors}
For any $X,Y\in\GH$, from $d_{GH}(X,Y)=0$ it follows that $s(X)=s(Y)$.
\end{cor}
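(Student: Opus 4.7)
The plan is to reduce the statement to the already-observed fact that $d_{GH}(X,Y)=0$ forces $\overline{\dist X}=\overline{\dist Y}$, and then to show that $s(\cdot)$ is completely determined by the closure of the distance set. Applying Proposition~\ref{propcomparison}(\ref{propcomparison:1}) to the hypothesis $d_{GH}(X,Y)=0$ gives $d_H(\dist X,\dist Y)=0$, which (for subsets of $[0,\infty)$) is equivalent to $\overline{\dist X}=\overline{\dist Y}$, exactly as recorded in the bullet list preceding the proposition.

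Now I would show the implication ``$s(X)>0 \Rightarrow s(Y)\ge s(X)$''. By definition of $s(X)$ we have $\dist X\cap \bigl(0,s(X)\bigr)=\0$, that is, $\dist X\ss \{0\}\cup[s(X),\infty)$. The right-hand side is closed in $[0,\infty)$, hence $\overline{\dist X}\ss\{0\}\cup[s(X),\infty)$; using the closure identity, the same inclusion holds for $\overline{\dist Y}$ and therefore for $\dist Y$, giving $s(Y)\ge s(X)$. Exchanging $X$ and $Y$ yields the symmetric implication ``$s(Y)>0 \Rightarrow s(X)\ge s(Y)$''.

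Combining these two implications finishes the argument: if $s(X)$ and $s(Y)$ are both positive, they are equal; and if one of them, say $s(X)$, equals $0$ while $s(Y)>0$, then the symmetric implication would force $s(X)\ge s(Y)>0$, a contradiction, so the other must vanish as well. The only point that requires any care is this edge case where one of the values is zero; no substantive obstacle arises.
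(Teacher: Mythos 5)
Your proof is correct and follows exactly the route the paper intends: it derives $\overline{\dist X}=\overline{\dist Y}$ from Property~(\ref{propcomparison:1}) of Proposition~\ref{propcomparison} and then observes that $s(\cdot)$, being $\bigl|0\,(\dist X\sm\{0\})\bigr|$, is determined by the closure of the distance set. The paper leaves this last step implicit, so your closed-set inclusion argument (and the check of the case where one value vanishes) is just an honest filling-in of the same proof.
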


\begin{thm}\label{thm:GHzeros}
If $s(X)>0$ and $d_{GH}(X,Y)=0$, then for any $\e>0$, there exists a homeomorphism $f_\e\:X\to Y$ such that $\bigl||f_\e(x)f_\e(x')|-|xx'|\bigr|\le\e$ for any points $x,x'\in X$.
\end{thm}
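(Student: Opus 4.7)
The plan is to exhibit the homeomorphism $f_\e$ as the graph of a suitable correspondence of very small distortion, using that $s(X)>0$ forces such a correspondence to be single-valued in both directions.

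First I would invoke Corollary~\ref{cors} to conclude $s(Y)=s(X)>0$, so that both $X$ and $Y$ are uniformly discrete. In particular every mapping between them is automatically continuous, and it therefore suffices to produce a bijection $f_\e\:X\to Y$ satisfying the claimed distortion estimate; its inverse will automatically be continuous as well. Without loss of generality, replacing $\e$ by $\min\{\e,s(X)\}>0$ if necessary, I may assume $\e\le s(X)=s(Y)$. Since $d_{GH}(X,Y)=0$, formula~(\ref{eq:3}) yields a correspondence $R\in\cR(X,Y)$ with $\dis R<\e$.

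Next I would verify that $R$ is the graph of a bijection. If $(x,y)$ and $(x,y')$ both lie in $R$, then the distortion inequality~(\ref{eq:1}) gives $|yy'|\le\dis R<\e\le s(Y)$, and the definition of $s(Y)$ forces $y=y'$; thus each $R(x)$ is a singleton. Applying the same reasoning to $R^{-1}\in\cR(Y,X)$ shows that each $R^{-1}(y)$ is likewise a singleton. Hence $R$ is the graph of a bijection $f_\e\:X\to Y$, which, as noted, is a homeomorphism. Its distortion inherits the bound $\dis f_\e=\dis R<\e$, so $\bigl||f_\e(x)f_\e(x')|-|xx'|\bigr|\le\e$ for all $x,x'\in X$.

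The only real content is the single-valuedness argument, which works precisely because $\e$ has been pushed below $s(X)=s(Y)$: any two distinct points of $Y$ corresponding to a common $x\in X$ would have to lie closer than $s(Y)$, which is impossible. Everything else -- continuity, bijectivity of the inverse, and the distortion bound -- is routine once $R$ has been shown to collapse onto a bijection.
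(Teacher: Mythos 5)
Your proof is correct, and it reaches the conclusion by a somewhat more direct route than the paper. The paper first invokes Proposition~\ref{prop:d2} to upgrade $d_{GH}(X,Y)=0$ to $d^c_{GH}(X,Y)=0$, extracts continuous maps $f_\e\:X\to Y$ and $g_\e\:Y\to X$ with $\dis R_{f_\e,g_\e}\le\e<s(X)$, and then uses the \emph{codistortion} bound $\bigl|x\,g_\e(f_\e(x))\bigr|=\bigl||x\,g_\e(y)|-|f_\e(x)\,y|\bigr|\le\e<s(X)$ (with $y=f_\e(x)$) to conclude $g_\e\circ f_\e=\Id_X$, and symmetrically $f_\e\circ g_\e=\Id_Y$ via $s(Y)=s(X)$. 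You instead take a correspondence $R\in\cR(X,Y)$ with $\dis R<\e\le s(X)=s(Y)$ and show directly that $R$ is single-valued in both directions, hence the graph of a bijection; continuity of the bijection and of its inverse is free because $s>0$ makes both spaces discrete. The two arguments exploit the same phenomenon (a relation of distortion below $s$ must collapse to a function), but yours bypasses the continuous-GH machinery and the codistortion entirely, which makes it self-contained and arguably cleaner; the paper's version has the side benefit of exhibiting the inverse as the explicitly constructed continuous map $g_\e$ already produced by Proposition~\ref{prop:d2}. One tiny stylistic point: after replacing $\e$ by $\min\{\e,s(X)\}$ you use the strict inequality $\dis R<\e\le s(Y)$ to force $|yy'|<s(Y)$, which is exactly what the single-valuedness argument needs, so the reduction is sound and the final bound $\dis f_\e<\e$ still certifies the original estimate.
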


\begin{proof}
Let $d_{GH}(X,Y)=0$. By Proposition~\ref{prop:d2}, we have $d_{GH}^c(X,Y)=0$. Let $0<\e <s(X)$ and $f_\e\:X\to Y$, $g_\e\:Y\to X$ satisfy $\dis R_{f_\e ,g_\e }\le\e$.

We show that $g_\e\circ f_\e\:X\to X$ and $g_\e\circ f_\e\:Y\to Y$ are identity mappings. For an arbitrary point $x\in X$ and the point $y=f_\e(x)\in Y$, we write the codistortion
$$
\Bigl|x\,g_\e\bigl(f_\e (x)\bigr)\Bigr|=\Bigl|\bigl|x\,g_\e(y)\bigr|-\bigl|f_\e(x)\,y\bigr|\Bigr|\le\e<s(X).
$$
Therefore, $x=g_\e\bigl(f_\e(x)\bigr)$.

Corollary~\ref{cors} implies $s(Y)=s(X)$. Therefore, the equality $g_\e\circ f_\e=\Id_Y$ can be proved similarly.

The inequality $\bigl||f_\e (x)f_\e (x')|-|xx'|\bigr|\le\e$ is exactly the inequality $\dis f_\e\le\e$.
\end{proof}

\begin{prop}\label{propbigdifferent}
Let $X\in\GH$ have a number $r>0$ such that $s(\dist X)\ge r$ \(the distance between distinct points of the set $\dist X$ is at least $r$\). Then $s(X)\ge r$, and for $\e<r$, every mapping $f_\e$ from Theorem~$\ref{thm:GHzeros}$ is an isometry. In particular, this space $X$ has the uniqueness property in the class $\GH$.
\end{prop}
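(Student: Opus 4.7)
The plan is to exploit the uniform discreteness of $\dist X\subset\R$ that is built into the hypothesis $s(\dist X)\ge r$, and then to force the almost-isometry produced by Theorem~\ref{thm:GHzeros} to land exactly on elements of $\dist X$, so that the ``almost'' becomes ``exactly''. First I would check the easy claim $s(X)\ge r$: for distinct $x,x'\in X$ the positive number $|xx'|$ lies in $\dist X$; since $0\in\dist X$ as well, the separation hypothesis gives $|xx'|=\bigl||xx'|-0\bigr|\ge r$, and taking the infimum over $x\ne x'$ yields $s(X)\ge r$.

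Next I would observe that the same separation hypothesis makes $\dist X$ a closed subset of $\R$: any convergent sequence in $\dist X$ must eventually be constant (its terms are either equal or at least $r$ apart), so its limit already lies in $\dist X$. Fix now any $Y\in\GH$ with $d_{GH}(X,Y)=0$. By the observation recorded just before Proposition~\ref{propcomparison} we have $d_H(\dist X,\dist Y)=0$, hence $\overline{\dist X}=\overline{\dist Y}$; since $\dist X$ is closed this gives the key inclusion $\dist Y\subset\dist X$.

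Finally, pick any $\e<r$. Since $s(X)\ge r>0$, Theorem~\ref{thm:GHzeros} applies and yields a homeomorphism $f_\e\:X\to Y$ satisfying
$$
\bigl||f_\e(x)f_\e(x')|-|xx'|\bigr|\le\e\quad\text{for all }x,x'\in X.
$$
For any such pair, $|xx'|\in\dist X$ and $|f_\e(x)f_\e(x')|\in\dist Y\subset\dist X$; these are two elements of a set whose distinct points are at distance at least $r>\e$, so they must coincide. Thus $f_\e$ is distance-preserving, and being also a bijection it is an isometry $X\to Y$, proving both claims: every $f_\e$ from Theorem~\ref{thm:GHzeros} with $\e<r$ is an isometry, and $X$ has the uniqueness property in $\GH$. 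The only nontrivial ingredient is the inclusion $\dist Y\subset\dist X$, which relies on closedness of $\dist X$; beyond that the argument is routine and I do not expect any serious obstacle.
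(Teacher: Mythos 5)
Your proposal is correct and follows essentially the same route as the paper: establish that $\dist X$ is closed, use $d_H(\dist X,\dist Y)=0$ to get $\dist Y\ss\dist X$, and then note that two elements of $\dist X$ differing by at most $\e<r$ must coincide, forcing $f_\e$ to be an isometry. The only cosmetic differences are that you explicitly verify $s(X)\ge r$ via $0\in\dist X$ (which the paper leaves as ``clear'') and that you stop at the inclusion $\dist Y\ss\dist X$ where the paper also deduces the full equality $\dist X=\dist Y$, which is not needed for the conclusion.
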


\begin{proof}
It is clear that the set $\dist X\ss\R$ is closed and discrete. Let $d_{GH}(X,Y)=0$. Then from Item~(\ref{propcomparison:1}) of Proposition~\ref{propcomparison} it follows $d_H\bigl(\dist X,\dist Y\bigr)=0$, whence, since $\dist X$ is closed, we obtain $\dist X=\overline{\dist X}=\overline{\dist Y}$, therefore $\dist Y\ss\dist X$. Since $\dist X$ is discrete, the closure of a proper subset cannot be equal to $\dist X$, whence $\dist X=\dist Y$. According to the choice of the mapping $f_\e\:X\to Y$, for any points $x,x'\in X$, the inequality $\bigl||f_\e(x)f_\e(x')|-|xx'|\bigr|\le\e<r$ holds, and if $\bigl|f_\e(x)f_\e(x')\bigr|\ne|xx'|$, then the quantities $\bigl|f_\e(x)f_\e(x')\bigr|$ and $|xx'|$ differ from each other by at least $r$. The latter, together with the previous inequality, implies $\bigl|f_\e(x)f_\e(x')\bigr|=|xx'|$.
\end{proof}

\begin{cor}\label{corfinite}
Every metric space $X$ with a finite set $\dist X$ has the property of uniqueness in the class $\GH$.
\end{cor}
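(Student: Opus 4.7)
The plan is to reduce the statement directly to Proposition~\ref{propbigdifferent}. The key observation is that the hypothesis ``$\dist X$ is finite'' automatically supplies the number $r>0$ required there.

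First I would handle the degenerate case $X=\D_1$: if $d_{GH}(\D_1,Y)=0$, then by the analogue of Property~(\ref{prop:GHelementProps:3}) of Proposition~\ref{prop:GHelementProps} for the classical distance $d_{GH}$, we have $\diam Y=2d_{GH}(\D_1,Y)=0$, so $Y$ consists of a single point and is isometric to $X$. This case is necessary because $s$ is defined only on non-single-point spaces, so Proposition~\ref{propbigdifferent} cannot be invoked directly for $\D_1$.

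Assuming now that $X$ has at least two points, the set $\dist X\ss[0,\infty)$ contains $0$ together with at least one positive value, so $|\dist X|\ge 2$. Since $\dist X$ is finite, I would set
$$
r:=\min\bigl\{|a-b|:a,b\in\dist X,\ a\ne b\bigr\},
$$
noting that this is a minimum of a finite nonempty collection of strictly positive reals, hence $r>0$. By the definition of $s$ applied to the non-single-point subset $\dist X\ss\R$, this gives $s(\dist X)=r>0$. Proposition~\ref{propbigdifferent} then immediately yields that $X$ has the uniqueness property in $\GH$.

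There is really no hard step here; the content is a direct instantiation of the preceding proposition. The only minor subtlety is checking that the hypotheses of Proposition~\ref{propbigdifferent} are legitimately met (in particular that $\dist X$ is non-single-point so that $s(\dist X)$ makes sense), which is why the one-point case must be dispatched separately at the outset.
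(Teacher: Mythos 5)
Your proposal is correct and follows exactly the route the paper intends: the corollary is an immediate instantiation of Proposition~\ref{propbigdifferent}, since finiteness of $\dist X$ (with at least two elements) forces $s(\dist X)>0$. Your separate dispatch of the one-point case $X=\D_1$ is a careful touch the paper leaves implicit, and it is handled correctly.
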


\begin{rk}
The Corollary~\ref{corfinite} can also be immediately obtained from~\cite[Lemmas 5.1 and 6.1]{VikhrovSerb}.
\end{rk}

\begin{conj}\label{conjbigdifferent}
Every metric space $X$ with a closed and discrete set $\dist X$ has the property of uniqueness in the class $\GH$.
\end{conj}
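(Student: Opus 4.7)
I propose to reduce the conjecture to an application of Theorem~\ref{thm:GHzeros} combined with the rigidity that closedness and discreteness of $\dist X$ impose. From $d_{GH}(X,Y)=0$ and Proposition~\ref{propcomparison}(\ref{propcomparison:1}), one has $\overline{\dist Y}=\overline{\dist X}=\dist X$; discreteness then forces $\dist Y=\dist X$, since every element of $\dist X$ is isolated in $\overline{\dist Y}$ and so, being a limit of a sequence from $\dist Y$, must itself lie in $\dist Y$. In particular the smallest positive element of $\dist X$ is strictly positive, so $s(X)=s(Y)>0$, and Theorem~\ref{thm:GHzeros} supplies, for every $\e>0$ small enough, a bijection $f_\e\colon X\to Y$ with $\dis f_\e\le\e$.

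The crucial consequence of closedness plus discreteness of $\dist X$ is \emph{local exactness}: for every $R>0$, the finite set $\dist X\cap[0,R]$ admits a positive mutual gap $\delta_R:=\min\{|d-d'|:d\ne d'\in\dist X\cap[0,R]\}$. If $\e<\delta_R$ and $|xx'|\le R$, then $|f_\e(x)f_\e(x')|\in\dist Y=\dist X$ lies within $\e<\delta_R$ of $|xx'|$ and hence equals $|xx'|$ exactly. Picking $\e_n\downarrow0$ with $\e_n<\delta_n$, the bijection $f_n:=f_{\e_n}$ exactly preserves every pairwise distance not exceeding $n$. The problem thus reduces to extracting a genuine isometry from this sequence of ``eventually correct'' bijections.

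\textbf{Main obstacle.} The natural extraction is a diagonal or ultrafilter limit: in the countable case one enumerates $X=\{x_1,x_2,\ldots\}$ and passes to successive subsequences along which $f_n(x_i)$ stabilizes, setting $f(x_i)$ to the stable value; or, more abstractly, pick a non-principal ultrafilter $\cU$ on $\N$ and define $f(x):=\cU\text{-}\lim f_n(x)$. Any $f$ so obtained is automatically an isometry, since every individual distance in $X$ is preserved by $f_n$ for all sufficiently large $n$. The difficulty is that $Y$ is neither assumed compact nor locally finite, so $(f_n(x))_n\subseteq Y$ may wander through infinitely many distinct values without stabilizing or converging, leaving the ultrafilter limit undefined. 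A possible remedy is a Zorn-type back-and-forth argument on the poset of partial isometric embeddings $\phi\colon A\to Y$ with $A\subseteq X$, using the $f_\e$'s to witness each extension step; the deep technical point is then to ensure that the maximal element is both total on $X$ and surjective onto $Y$, which I expect to require a delicate analysis of the non-locally-finite case (e.g., bounded infinite uniformly discrete fragments of $X$), where the sphere of a given radius around a chosen image may contain infinitely many candidate values with no canonical rule for choice.
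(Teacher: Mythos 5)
The statement you are addressing is stated in the paper as a \emph{conjecture} and is left unproven there: the authors only establish the special case in which $\dist X$ is uniformly discrete, i.e.\ $s(\dist X)\ge r>0$ (Proposition~\ref{propbigdifferent}), together with the finite case (Corollary~\ref{corfinite}); in both of those a single map $f_\e$ with $\e<r$ is already an isometry and no limiting procedure is needed. Your reduction is the natural first step and is sound as far as it goes: from Proposition~\ref{propcomparison}(\ref{propcomparison:1}) and closedness plus discreteness one gets $\dist Y=\dist X$, hence $s(Y)=s(X)>0$, Theorem~\ref{thm:GHzeros} applies, and compactness of $[0,R]$ makes $\dist X\cap[0,R]$ finite, so the bijections $f_n$ preserve exactly all distances up to $n$ (one small repair: $\bigl|f_\e(x)f_\e(x')\bigr|$ may exceed $R$ by up to $\e$, so you should take $\e<\min\{1,\delta_{R+1}\}$ and compare both values inside $\dist X\cap[0,R+1]$). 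Note that your argument, specialized to $\delta_R\ge r$ for all $R$, recovers precisely Proposition~\ref{propbigdifferent}.

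The step you yourself flag as the ``main obstacle'' is, however, a genuine unfilled gap, and it is exactly the point at which the conjecture remains open. When $\delta_R\to0$ as $R\to\infty$, no single $f_n$ is an isometry, and, as you correctly observe, the sequence $f_n(x)$ need not stabilize or admit any convergent subnet in a space $Y$ that is neither compact nor locally finite, so neither the diagonal argument nor an ultrafilter limit produces a well-defined map. The Zorn/back-and-forth alternative is only sketched: the extension step for a maximal partial isometry fails in precisely the situation you describe (infinitely many candidate images at the prescribed distances, with no control relating the choices made by different $f_n$), and even if one obtained an isometric embedding $X\to Y$, surjectivity would not follow automatically, so the uniqueness property would still not be established. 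In short, the proposal correctly isolates the difficulty but does not resolve it; what it actually proves is only the uniformly discrete case already contained in the paper.
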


\begin{examp}\label{exs(X)>0}
On $X=\{0\}\sqcup\N\sqcup\{\infty\}$ we define the following metric:
$$
|xx'|=
\begin{cases}
2& \text{at $0\ne x\ne x'\ne 0$};\\
1+\frac1{2n}& \text{for $x=0$, $x'\in\N\sqcup\{\infty\}$}.
\end{cases}
$$
The metric space $X$ and its clopen subset $Y=\{0\}\sqcup\N\ss X$ have the following properties.
\begin{enumerate}[leftmargin=*,itemsep=0.3cm]
\item $\diam X=2=\diam Y$ and $s(X)=s(Y)=1$, therefore the spaces $X$ and $Y$ are complete and discrete.
\item $\dist X=\{0,1,1+\frac1{2n},2:n\in \N\}\ne\{0,1+\frac1{2n},2:n\in\N\}=\dist Y$, therefore the spaces $X$ and $Y$ are not isometric.
\item $d_{GH}^c(X,Y)=0$. The mapping $h_m\:X\to Y$, defined by the formula
$$
h_m(x)=
\begin{cases}
0& \text{when $x=0$};\\
x& \text{for $1\le x<m$};\\
x+1& \text{for $m\le x<\infty$};\\
m& \text{when $x=\infty$},
\end{cases}
$$
is a $\frac1{2m}$- isometry .
\item The isometries of the spaces $X$ and $Y$ are only their identity mappings.
\item $\bigl|Y\,(X\sm Y)\bigr|=1$ and $d_H(X,Y)=1$.
\end{enumerate}
\end{examp}

\section{Comparison of Topologies}
Let $p_c\:\GH^c\to \GH$ be the identity mapping. By Property~(\ref{prop:GHelementProps:1}) of Proposition~\ref{prop:GHelementProps}, $p_c$ is non-expanding and hence continuous. Recall also that a continuous bijective mapping is called a \emph{condensation}. This concept also carries over to topological classes, so $p_c$ is a condensation. Recall (Proposition~\ref{prop:ConnectDiscrete}) that in this case, two metric spaces at positive distance $d_{GH}^c$ can be sometimes mapped to spaces with zero distance $d_{GH}$.

\begin{prop}\label{prop:DiscontAtIndPosit}
The mapping $p_c^{-1}$ is discontinuous at every point $X\in \GH$ such that $\Ind X\ne0$.
\end{prop}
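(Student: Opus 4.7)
The plan is to apply Proposition~\ref{prop:Ind} to $X$ and then approximate $X$ in the $d_{GH}$-metric by discrete spaces, which by that proposition must stay bounded away from $X$ in $d^c_{GH}$.

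First, since $\Ind X\ne 0$, Proposition~\ref{prop:Ind} supplies a number $r>0$ such that for every metric space $Z$ with $\Ind Z=0$ the inequality $2d^c_{GH}(X,Z)\ge r$ holds. This is the only input that uses the hypothesis on $X$; the rest is a construction.

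Next, for each $\delta>0$, I would construct a discrete approximation $Y_\delta$ to $X$ by invoking Zorn's lemma on the poset of $\delta$-separated subsets of $X$ ordered by inclusion to obtain a maximal $\delta$-separated subset $Y_\delta\ss X$, equipped with the restricted metric. Maximality forces $Y_\delta$ to be a $\delta$-net in $X$, whence $d_{GH}(X,Y_\delta)\le d_H(X,Y_\delta)\le \delta$. At the same time, every point of $Y_\delta$ is topologically isolated (its open $\delta$-ball in $Y_\delta$ is a singleton), so $Y_\delta$ is a nonempty discrete topological space, and Example~\ref{examp:IndVanishesNotVanish} gives $\Ind Y_\delta=0$. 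Combining this with the first step yields $d^c_{GH}(X,Y_\delta)\ge r/2$.

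Taking $\delta_n\to 0$, the sequence $\{Y_{\delta_n}\}$ converges to $X$ in $\GH$ while staying at $d^c_{GH}$-distance at least $r/2$ from $X$, so $p_c^{-1}$ cannot be continuous at $X$. The argument contains no genuine obstacle beyond quoting Proposition~\ref{prop:Ind}; the only routine verification is the existence of the maximal $\delta$-separated set, which is a standard Zorn's lemma argument, and the observation that maximality immediately upgrades such a set to a $\delta$-net.
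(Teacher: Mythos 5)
Your proof is correct and follows essentially the same route as the paper: invoke Proposition~\ref{prop:Ind} to get the uniform lower bound $2d^c_{GH}(X,Y)\ge r$ for all $Y$ with $\Ind Y=0$, and then approximate $X$ in $d_{GH}$ by discrete subsets, which the paper leaves as ``easy to show'' and you supply via maximal $\delta$-separated sets. No discrepancies to report.
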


\begin{proof}
Let us choose an arbitrary non-empty metric space $X$ such that $\Ind X\ne0$. By Proposition~\ref{prop:Ind}, there exists $r>0$ such that for every non-empty metric space $Y$ with $\Ind Y=0$, we have $d^c_{GH}(X,Y)\ge r$. For every discrete space $Z$, it holds $\Ind Z=0$. On the other hand, it is easy to show that for every $\dl>0$, there exists a discrete $Y\ss X$ such that $d_{GH}(X,Y)\le d_H(X,Y)<\dl$. The latter means that there is no a neighborhood $U_\dl(X)$ with $\dl<r$ and $p_c^{-1}\bigl(U_\dl(X)\bigr)\ss U_r(X)$. The latter means the discontinuity of $p_c^{-1}$ at the point $X$.
\end{proof}

\begin{rk}\label{rk:finite}
In the proof of Proposition~\ref{prop:DiscontAtIndPosit} we actually showed more, namely that $p_c^{-1}$ is discontinuous at $X$, $\Ind X\ne0$ on various smaller sets,
for example,
\begin{itemize}
\item on the union of $\{X\}$ with the class of all spaces with zero large inductive dimension;
\item on the union of $\{X\}$ with the class of all discrete spaces;
\item for compact $X$, on the union of $\{X\}$ with the set of all finite metric spaces.
\end{itemize}

Recall that the mapping $p_c$ is an isometry on the subclass of all zero-dimensional spaces in the sense of Lebesgue (Corollary~\ref{cor:dim0}). However, the results obtained above show that adding even a single point $X$ can lead to discontinuity in $p_c^{-1}$ and, hence, to violation of isometry.
\end{rk}

Below we will show that the mapping $p_c^{-1}$ can be discontinuous at points corresponding to discrete spaces, and the discontinuity manifests itself on a sufficiently thin subset of $\GH$. Nevertheless, on completely discrete spaces $X$, i.e., when $s(X)>0$, the mapping $p_c^{-1}$ is continuous. The following result follows immediately from Proposition~\ref{prop:d2}.

\begin{cor}\label{cor:isometry}
If $s(X)>0$, then the mapping $p_c^{-1}$ is continuous at the point $X\in\GH$.
\end{cor}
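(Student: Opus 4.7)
The plan is to deduce the continuity of $p_c^{-1}$ at $X$ directly from Proposition~\ref{prop:d2}, which asserts that when $Y$ is sufficiently close to $X$ in $d_{GH}$ (quantitatively, $2d_{GH}(X,Y) < s(X)$), the continuous and classical Gromov--Hausdorff distances actually coincide. In other words, once $Y$ enters a small enough $d_{GH}$-ball around $X$, the two pseudometrics agree on the pair $(X,Y)$, and continuity at $X$ becomes trivial.

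More concretely, let $s := s(X) > 0$. Given any $\varepsilon > 0$, I would set
$$
\delta := \min\!\Bigl\{\tfrac{s}{2},\ \varepsilon\Bigr\} > 0.
$$
Now suppose $Y \in \GH$ satisfies $d_{GH}(X,Y) < \delta$. Then $2 d_{GH}(X,Y) < 2\delta \le s = s(X)$, so Proposition~\ref{prop:d2} applies and yields
$$
d^c_{GH}(X,Y) = d_{GH}(X,Y) < \delta \le \varepsilon.
$$
Thus $p_c^{-1}$ sends the $d_{GH}$-ball of radius $\delta$ around $X$ into the $d^c_{GH}$-ball of radius $\varepsilon$ around $X$, which is exactly continuity at $X$.

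There is essentially no obstacle: the substantive work has already been done in Proposition~\ref{prop:d2}, and the only thing to verify is the quantitative choice of $\delta$. The one point worth being explicit about is the well-definedness issue arising because $\GH$ and $\GH^c$ are proper classes rather than sets — but since the definition of continuity used in the paper is the usual $\varepsilon$--$\delta$ formulation between (generalized) pseudometric classes (as invoked already in Proposition~\ref{prop:DiscontAtIndPosit}), the argument above suffices without any further class-theoretic machinery.
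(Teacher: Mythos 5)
Your proof is correct and follows the paper's own route: the paper states that this corollary ``follows immediately from Proposition~\ref{prop:d2}'', and your argument simply spells out the $\e$--$\dl$ details of that implication, with the class-theoretic formulation of continuity justified by Theorem~\ref{thm:ContMapMetricClasses}.
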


For compact metric spaces, there is a natural criterion for the continuity of the mapping $p_c^{-1}$.

\begin{thm}\label{thm:comp0}
For a compact metric space $X\in\GH$, the following conditions are equivalent
\begin{enumerate}
\item\label{thm:comp0:1} $\Ind X=0$;
\item\label{thm:comp0:2} the mapping $p_c^{-1}$ is continuous at $X$.
\end{enumerate}
\end{thm}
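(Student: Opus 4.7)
The plan is to handle the two implications separately. The implication (\ref{thm:comp0:2})$\Rightarrow$(\ref{thm:comp0:1}) comes essentially for free: by Proposition~\ref{prop:DiscontAtIndPosit}, $\Ind X\ne 0$ already forces discontinuity of $p_c^{-1}$ at $X$ (witnessed by approach via discrete $\e$-nets of $X$), so continuity at $X$ implies $\Ind X=0$.

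The substance lies in (\ref{thm:comp0:1})$\Rightarrow$(\ref{thm:comp0:2}). Given compact $X$ with $\Ind X=0$, since compacta are Lindel\"of, I would first upgrade to $\dim X=0$ via the Lindel\"of-case theorem cited just after Theorem~\ref{thm:SeparMetriAllDimsEq}. Fix $\e>0$. Covering $X$ by open balls of diameter less than $\e/5$, using Dowker's theorem (Theorem~\ref{thm:Dowker}) to inscribe an open refinement of multiplicity $1$, and then invoking compactness to pass to a finite subfamily, I obtain a finite clopen partition $\cU=\{U_1,\ldots,U_k\}$ of $X$ with $\diam U_i<\e/5$. Because the $U_i$ are pairwise disjoint compact subsets of a compactum, the gap $\eta:=\min_{i\ne j}|U_iU_j|$ is strictly positive; this is the crucial use of compactness.

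Now choose $\dl>0$ much smaller than $\min\{\e,\eta\}$ and suppose $d_{GH}(X,Y)<\dl$. Take $R\in\cR(X,Y)$ with $\dis R<2\dl$ and push the partition forward by setting $V_i:=R(U_i)$. The $V_i$ cover $Y$ by surjectivity, and $\dis R<2\dl$ together with $\eta>0$ gives $|V_iV_j|\ge\eta-2\dl>0$ for $i\ne j$. The key topological observation is that a finite pairwise-separated cover of a metric space is automatically a clopen partition: each $V_i$ coincides with its closure (being at positive distance from every other $V_j$), hence is closed, and its complement is the finite union of the remaining closed $V_j$, hence open. Picking representatives $x_i\in U_i$ and $y_i\in V_i$ with $(x_i,y_i)\in R$, I then define $f\colon X\to Y$, $g\colon Y\to X$ by $f|_{U_i}\equiv y_i$ and $g|_{V_i}\equiv x_i$; both are locally constant on clopen partitions and hence continuous. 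Routine estimates, using $\dis R<2\dl$ together with $\diam U_i<\e/5$, bound each of $\dis f$, $\dis g$, $\codis(f,g)$ by roughly $2\dl+2\e/5$, which is below $\e$ for $\dl$ small enough, yielding $d^c_{GH}(X,Y)<\e/2$.

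The main obstacle is the clopen-partition claim for $Y$: a \emph{continuous} map $g\colon Y\to X$ is required, yet $Y$ is an arbitrary metric space with no dimensional hypothesis, so the only available source of separation is the metric gap $\eta$ inside $X$ transferred through $R$. Compactness of $X$ is precisely what guarantees $\eta>0$, and the failure of such positivity in the non-compact or non-$\Ind\,0$ setting is exactly what Proposition~\ref{prop:DiscontAtIndPosit} exploits in the opposite direction.
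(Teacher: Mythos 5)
Your proposal is correct and follows essentially the same route as the paper's proof: the easy direction via Proposition~\ref{prop:DiscontAtIndPosit} (the paper invokes Proposition~\ref{prop:Ind} plus discrete $\e$-nets, which is the same argument), and the hard direction by pushing a finite clopen partition of small mesh forward through a correspondence of small distortion, using compactness to get a positive gap between the pieces and hence a clopen partition of $Y$ on which $g$ can be taken locally constant. Your explicit remark that $\Ind X=0$ upgrades to $\dim X=0$ via the Lindel\"of property is a small point the paper passes over silently; otherwise the two arguments differ only in constants.
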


\begin{proof}
(\ref{thm:comp0:2})\imply(\ref{thm:comp0:1}). Assume the contrary, i.e., that $\Ind X\ne0$. By Proposition~\ref{prop:Ind}, there exists $r>0$, depending only on $X$, such that $2d_{GH}^c(X,Y)\ge r$. But the latter contradicts the continuity of $p_c^{-1}$ at $X$.

(\ref{thm:comp0:1})\imply(\ref{thm:comp0:2}). We need to prove that for every $\e>0$, there exists $\dl >0$ such that $d_{GH}(X,Y)<\dl$ implies $d_{GH}^c(X,Y)<\e$. For $0<\e'<\e/4$, consider the covering $\l=\bigl\{U_{\e'}(x)\bigr\}_{x\in X}$. Since $\dim X=0$, there exists a clopen partition $\mu $ inscribed in $\l$. Since $X$ is compact, the partition $\mu$ is finite. Let $\mu=\{U_1,\ldots,U_m\}$, then $r:=\min\bigl\{|U_iU_j|:i\ne j\bigr\}>0$ and $\diam U_i\le2\e'<\e/2$. We set $\dl =\min\{r/2,\e'\}<\e/4$ and show that $\dl$ is what we are looking for.

Let $d_{GH}(X,Y)<\dl$, then there exists $\a>0$ such that $d_{GH}(X,Y)<\dl-\a$ and, therefore, we can choose a correspondence $R\in\cR(X,Y)$ for which $\dis R<2\dl-2\a\le r-2\a$. Let $V_k=R(U_k)$, then for $i\ne j$, we have $V_i\cap V_j=\0$, since for any $y_k\in V_k$ and $x_k\in U_k$ such that $(x_k,y_k)\in R$, we have
$$
|y_iy_j|\ge|x_ix_j|-\dis R>r-(r-2\a )=2\a.
$$
Thus, $V_k=\cup_{y\in V_k}U_\a(y)$ is an open set for every $k$, so $\{V_k\}_{k=1}^m$ is a partition of $Y$ by clopen sets. Note that $\diam V_k\le\diam U_k+\dis R\le 2\e'+\e'<3\e/4$.

Let us now choose arbitrary $x_k\in U_k$, $y_k\in V_k$, $(x_k,y_k)\in R$ and define the mappings $f\:X\to Y$ and $g\:Y\to X$ as follows: $f(U_k)=y_k$ and $g(V_k)=x_k$, then $f$ and $g$ are continuous. Let us show that $\dis R_{f,g}<2\e$, whence $d_{GH}^c(X,Y)\le \frac12\dis R_{f,g}<\e$, which is what we want.

Let us choose arbitrary $x\in U_i$ and $x'\in U_j$, then
\begin{multline*}
\bigl||xx'|-|y_iy_j|\bigr|=\bigl||xx'|-|x'x_i|+|x'x_i|-|x_ix_j|+|x_ix_j|-|y_iy_j|\bigr|\le\\ \le
\bigl||xx'|-|x'x_i|\bigr|+\bigl||x'x_i|-|x_ix_j|\bigr|+\bigl||x_ix_j|-|y_iy_j|\bigr|\le\\ \le
|xx_i|+|x'x_j|+\dis R\le \diam U_i+\diam U_j+\dis R<\e/2+\e/2+\e/4<2\e,
\end{multline*}
therefore $\dis f<2\e$.

Let us now choose arbitrary $y\in V_i$ and $y'\in V_j$, then
\begin{multline*}
\bigl||x_ix_j|-|yy'|\bigr|=\bigl||x_ix_j|-|y_iy_j|+|y_iy_j|-|y_jy|+|y_jy|-|yy'|\bigr|\le\\ \le
\bigl||x_ix_j|-|y_iy_j|\bigr|+\bigl||y_iy_j|-|y_jy|\bigr|+\bigl||y_jy|-|yy'|\bigr|\le\\ \le
\dis R+|y_iy|+|y_jy'|\le\dis R+\diam V_i+\diam V_j<\e/4+3\e/4+3\e/4<2\e,
\end{multline*}
therefore $\dis g<2\e$.

Finally, let us estimate the codistortion $\codis (f,g)$. To do this, we choose arbitrary $x\in U_i$ and $y\in V_j$, then
\begin{multline*}
\bigl||x\,x_j|-|y_iy|\bigr|=\bigl||x\,x_j|-|x_jx_i|+|x_jx_i|-|y_jy_i|+|y_jy_i|-|y_iy|\bigr|\le\\ \le
\bigl||x\,x_j|-|x_jx_i|\bigr|+\bigl||x_jx_i|-|y_jy_i|\bigr|+\bigl||y_jy_i|-|y_iy|\bigr|\le\\ \le
|xx_i|+\dis R+|y_jy|\le \diam U_i+\dis R+\diam U_j<\e/2+\e/4+\e/2<2\e,
\end{multline*}
from which $\codis(f,g)<2\e$ and, therefore, $\dis R_{f,g}<2\e$, which completes the proof.
\end{proof}

\begin{examp}\label{examp:ex1}
Theorem~\ref{thm:comp0} states that at a point $X$, which is a zero-dimensional compact (complete and totally bounded) space, the mapping $p_c^{-1}$ is continuous. It turns out that if we abandon the condition of total boundedness, then even for countable discrete spaces, continuity is not guaranteed. Below we give an example of a countable (and therefore zero-dimensional) complete discrete space $X$ such that the mapping $p_c^{-1}$ is discontinuous at the point $X\in \GH$. By Corollary~\ref{cor:isometry}, $s(X)=0$ for this $X$.

Let us take a countable number $Z=I\times\N=\sqcup_{n\in\N}I_n$ of standard unit intervals $I_n=[0,1]$. We define the distance between points of one interval to be standard, and set the distance between points of different intervals to be equal to $1$. In the interval $I_n$, we consider the subset $S_n=\bigl\{i/2^n:i=0,\ldots,2^n\bigr\}$. We put
$$
X_\infty=\sqcup_{k\in\N}S_k\ss Z\ \ \text{i}\ \ X_n=\bigl(\sqcup_{k<n}S_k\bigr)\sqcup\bigl(\sqcup_{k\ge n}I_k\bigr)\ss Z.
$$
Note that $X_\infty$ is a countable discrete space, so it is complete. It is easy to see that $d_H(X_\infty,X_n)=2^{-(n+1)}$, so $X_n\tolim^{d_{GH}\strut}X_\infty$. Furthermore, since $X_\infty$ is a totally disconnected space, and the maximum diameter of connected components in $X_n$ is $1$, by Proposition~\ref{prop:ConnectDiscrete} we get $2d_{GH}^c(X_\infty,X_n)\ge1$, so $X_n$ does not converge to $X_\infty$ w.r.t\. $d_{GH}^c$, and hence the mapping $p_c^{-1}$ is discontinuous at $X_\infty$.
\end{examp}

\section{Incomparable Spaces}

We say that a topological space $X$ is \emph{incomparable\/} with $Y$ if every continuous mapping $f\:X\to Y$ is trivial, i.e., there is a point $y_f\in Y$ such that $f(X)=y_f$. For example, every connected space is incomparable with every totally disconnected space. Note that the incomparability relation is not symmetric; for example, the interval $X=[0,1]$ is incomparable with the set $Y$ of its rational points, but $Y$ is comparable with $X$ (the inclusion of $Y$ in $X$ is a nontrivial continuous mapping).

\begin{prop}\label{prop:2}
Let the metric space $X$ be incomparable with the metric space $Y$. Then
\begin{enumerate}
\item\label{prop:2:1} for any mapping $f\:X\to Y$, it holds $\dis f=\diam X$\rom;
\item\label{prop:2:2} for any mappings $f\:X\to Y$ and $g\:Y\to X$, it holds
$$
\codis(f,g)\ge\max\bigl\{R(X),R(Y)\bigr\}.
$$
\end{enumerate}
\end{prop}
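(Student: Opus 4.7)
The plan is to exploit the one-sided incomparability hypothesis to force $f$ (but not necessarily $g$) to be constant, and then read off both bounds from the codistortion formula by choosing the right pair $(x,y)$ at which to evaluate.

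First I would note that because $f\:X\to Y$ is continuous and $X$ is incomparable with $Y$, there exists $y_f\in Y$ with $f(x)=y_f$ for every $x\in X$. Item (\ref{prop:2:1}) is then immediate: the distortion
$$
\dis f=\sup_{x,x'\in X}\bigl||xx'|-|f(x)f(x')|\bigr|=\sup_{x,x'\in X}\bigl||xx'|-|y_fy_f|\bigr|=\sup_{x,x'\in X}|xx'|=\diam X,
$$
(and this is clearly also an upper bound, but only the lower bound is asserted).

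For Item (\ref{prop:2:2}) I would get the two lower bounds separately by plugging clever choices into definition~(\ref{eq:4}) of codistortion. To obtain $\codis(f,g)\ge R(X)$, fix the $Y$-variable at $y=y_f$: then $|f(x)\,y|=|y_fy_f|=0$, so for every $x\in X$
$$
\codis(f,g)\ge\bigl||x\,g(y_f)|-|f(x)\,y_f|\bigr|=|x\,g(y_f)|,
$$
and taking the supremum over $x$ yields $\codis(f,g)\ge R_{g(y_f)}\ge R(X)$. To obtain $\codis(f,g)\ge R(Y)$, for an arbitrary $y\in Y$ I would specialize the $X$-variable to $x=g(y)$: then $|x\,g(y)|=0$, while $|f(x)\,y|=|y_f\,y|$, giving
$$
\codis(f,g)\ge\bigl||g(y)\,g(y)|-|f(g(y))\,y|\bigr|=|y_f\,y|.
$$
Taking the supremum over $y\in Y$ yields $\codis(f,g)\ge R_{y_f}\ge R(Y)$, and combining the two estimates proves (\ref{prop:2:2}).

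There is no real obstacle here; the only conceptual point worth underlining is that the incomparability relation is not symmetric, so $g$ need not be constant, and therefore one cannot simply swap the roles of $X$ and $Y$. The codistortion, however, is by construction a symmetric mechanism for comparing distances across $f$ and $g$, and the two evaluations above (fixing $y=y_f$ and fixing $x=g(y)$) are precisely what lets the one-sided constancy of $f$ control radii on both sides.
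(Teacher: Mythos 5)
Your proof is correct and follows essentially the same route as the paper's: constancy of $f$ gives $\dis f=\diam X$ directly, and the two codistortion bounds come from exactly the same specializations ($y=y_f$ to get $R_{g(y_f)}(X)\ge R(X)$, and $x=g(y)$ to get $R_{y_f}(Y)\ge R(Y)$). Your closing remark about the asymmetry of incomparability is a nice observation but does not change the argument.
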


\begin{proof}
(\ref{prop:2:1}) We have
$$
\dis f=\sup_{x,x'\in X}\bigl||xx'|-|f(x)f(x')|\bigr|=\sup_{x,x'\in X}|xx'|=\diam (X).
$$

(\ref{prop:2:2}) We have
\begin{multline*}
\codis (f,g)=\sup_{x\in X,\,y\in Y}\bigl||x\,g(y)|-|f(x)y|\bigr|\ge
\sup_{x\in X}\bigl||x\,g(y_f)|-|f(x)y_f|\bigr|=\\ =
\sup_{x\in X}|x\,g(y_f)|=R_{g(y_f)}(X)\ge R(X).
\end{multline*}
Similarly,
\begin{multline*}
\codis (f,g)=\sup_{x\in X,y\in Y}\Bigl|\bigl|x\,g(y)\bigr|-\bigl|f(x)y\bigr|\Bigr|\ge
\sup_{y\in Y}\Bigl|\bigl|g(y)g(y)\bigr|-\bigl|f\bigl(g(y)\bigr)y\bigr|\Bigr|=\\
\sup_{y\in Y}\Bigl|\bigl|g(y)g(y)\bigr|-|y_fy|\Bigr|=
\sup_{y\in Y}|y_fy|=R_{y_f}(Y)\ge R(Y).
\end{multline*}
To complete the proof, it remains to gather together the two resulting inequalities.
\end{proof}

\begin{cor}\label{cor:2}
Let a metric space $X$ be incomparable with a metric space $Y$. Then $2d_{GH}^c(X,Y)\ge\diam(X)$.
\end{cor}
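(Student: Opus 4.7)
The plan is to deduce Corollary~\ref{cor:2} directly from Proposition~\ref{prop:2}(\ref{prop:2:1}) by invoking the definition of $d_{GH}^c$ in formula~(\ref{eq:7}). The definition takes the infimum over pairs of continuous maps $f\in C(X,Y)$ and $g\in C(Y,X)$ of $\max\{\dis f,\,\dis g,\,\codis(f,g)\}$. Since incomparability of $X$ with $Y$ is a statement about continuous maps, Proposition~\ref{prop:2}(\ref{prop:2:1}) applies to precisely these $f$: every such $f$ is constant, hence $\dis f=\diam X$. Therefore $\max\{\dis f,\,\dis g,\,\codis(f,g)\}\ge\dis f=\diam X$ for every admissible pair $(f,g)$, and taking the infimum yields $2d_{GH}^c(X,Y)\ge\diam X$.

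No genuine obstacle appears here; the work has been front-loaded into Proposition~\ref{prop:2}. The one subtle point worth stating cleanly is that the incomparability hypothesis is asymmetric (only continuous $f\colon X\to Y$ are required to be trivial), but that is exactly the side of the correspondence that contributes the $\dis f$ term in the definition of $d_{GH}^c$, so this matches up without needing anything about $g$. In particular, $\dis g$ and $\codis(f,g)$ only strengthen the bound (and would give the possibly larger lower bound $\max\{\diam X,R(X),R(Y)\}$ via Proposition~\ref{prop:2}(\ref{prop:2:2})), but are not needed for the stated inequality.
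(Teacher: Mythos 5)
Your proposal is correct and is exactly the derivation the paper intends: the corollary is stated without proof immediately after Proposition~\ref{prop:2}, and the inequality follows from item~(\ref{prop:2:1}) of that proposition together with Formula~(\ref{eq:7}), since every admissible pair $(f,g)$ has $\max\bigl\{\dis f,\dis g,\codis(f,g)\bigr\}\ge\dis f=\diam X$. Your remark about the asymmetry of incomparability lining up with the $\dis f$ term is a correct and worthwhile clarification.
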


\begin{cor}\label{cor:2=}
Let $X$ and $Y$ be mutually incomparable metric spaces. Then
$$
2d_{GH}^c(X,Y)=\max\bigl\{\diam(X),\diam(Y)\bigr\}.
$$
\end{cor}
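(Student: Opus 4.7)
The plan is to combine Corollary~\ref{cor:2} (applied in both directions using mutual incomparability) with the universal upper bound from Proposition~\ref{prop:GHelementProps}~(\ref{prop:GHelementProps:4}), which together will pin $2d_{GH}^c(X,Y)$ to the value $\max\{\diam X,\diam Y\}$.

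More precisely, first I would invoke Corollary~\ref{cor:2} for the pair $(X,Y)$: since $X$ is incomparable with $Y$, we immediately get $2d_{GH}^c(X,Y)\ge\diam(X)$. Then, using the symmetry of the generalized pseudometric $d_{GH}^c$ together with the hypothesis that $Y$ is incomparable with $X$, a second application of Corollary~\ref{cor:2} (with the roles of $X$ and $Y$ swapped) yields $2d_{GH}^c(X,Y)=2d_{GH}^c(Y,X)\ge\diam(Y)$. Combining the two inequalities gives the lower bound
$$
2d_{GH}^c(X,Y)\ge\max\bigl\{\diam(X),\diam(Y)\bigr\}.
$$

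For the reverse inequality, I would simply cite property~(\ref{prop:GHelementProps:4}) of Proposition~\ref{prop:GHelementProps}, which states that $2d_{GH}^c(X,Y)\le\max\{\diam X,\diam Y\}$ for \emph{any} pair of metric spaces. Putting the two estimates together yields the claimed equality.

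There is essentially no obstacle here, since both inputs are already established: the upper bound is a general fact, and the lower bound in each direction has just been proved in the preceding Corollary~\ref{cor:2}. The only ingredient worth emphasizing is that the mutual incomparability hypothesis is needed precisely to license invoking Corollary~\ref{cor:2} twice (once for each of $\diam X$ and $\diam Y$); a one-sided incomparability would only give the $\diam X$ bound from below, which is in general strictly weaker than $\max\{\diam X,\diam Y\}$ when $\diam Y>\diam X$.
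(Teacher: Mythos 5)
Your proof is correct and follows exactly the paper's own argument: the lower bound comes from applying Corollary~\ref{cor:2} in both directions (using mutual incomparability and the symmetry of $d_{GH}^c$), and the upper bound is Item~(\ref{prop:GHelementProps:4}) of Proposition~\ref{prop:GHelementProps}. Your added remark explaining why both directions of incomparability are needed is a helpful elaboration but does not change the substance.
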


\begin{proof}
The inequality $2d_{GH}^c(X,Y)\ge \max\bigl\{\diam(X),\diam(Y)\bigr\}$ follows from Corollary~\ref{cor:2}. The inverse inequality is Item~(\ref{prop:GHelementProps:4}) of Proposition~\ref{prop:GHelementProps}.
\end{proof}

\section{Hyperspace of Cook's Continuum}

Recall that a \emph{continuum} is every connected compact Hausdorff topological space. We will restrict ourselves to continua that are metric spaces. Thus, in what follows, by \emph{continuum} we mean a metrizable continuum.

We will also be interested in various \emph{hyperspaces}. If $X$ is a metric space, then by $\cH(X)$ we denote the family of all non-empty closed bounded subsets of $X$, endowed with the Hausdorff distance $d_H$. It is well known~\cite{BurBurIva01} that $d_H$ is a metric on $\cH(X)$. This $\cH(X)$ will be in our case the most general example of hyperspaces (a richer list of hyperspaces can be found in~\cite{IllanesNadler}). An important subspace of $\cH(X)$ is the family $\cK(X)$ of all non-empty compact subsets of $X$. If $X$ is compact, then $\cK(X)=\cH(X)$. Another ``restriction'' of hyperspace is obtained if we restrict ourselves to subcontinua of $X$. The family of all subcontinua in $X$ is denoted by $\cCK(X)$. Since every point in $X$ is a degenerate subcontinuum and the Hausdorff distance $d_H$ between one-point subspaces of $X$ coincides with the distance in $X$ between the corresponding points, there is an isometric embedding $x\mapsto\{x\}$ from $X$ into $\cCK(X)\ss\cK(X)\ss \cH(X)$. In what follows, we will informally write $X\ss\cCK(X)$ to refer to this embedding.

\begin{thm}\label{thm:XandCKclosed}
The spaces $X$ and $\cCK(X)$ are closed subsets of $\cK(X)$.
\end{thm}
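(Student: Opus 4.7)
The plan is to establish two closedness results separately.

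For the first part, I would observe that the image of $X$ in $\cK(X)$ under the embedding $x\mapsto\{x\}$ is exactly the set $\bigl\{K\in\cK(X):\diam K=0\bigr\}$. It therefore suffices to show that $\diam\:\cK(X)\to[0,\infty)$ is continuous with respect to $d_H$. This is a standard short calculation: if $d_H(A,B)\le\e$ and $a,a'\in A$, pick $b,b'\in B$ with $|ab|,|a'b'|\le\e+\eta$; then $|aa'|\le|ab|+|bb'|+|b'a'|\le 2\e+2\eta+\diam B$, giving $\diam A\le\diam B+2\e$ after sending $\eta\to 0$, and similarly with $A,B$ swapped. Hence $|\diam A-\diam B|\le 2\,d_H(A,B)$, so $\diam$ is $2$-Lipschitz and $X\ss\cK(X)$ is the closed preimage of $\{0\}$.

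For the second part, let $K_n\in\cCK(X)$ be a sequence with $K_n\to K$ in $\cK(X)$. I need to show $K$ is connected. Suppose, for contradiction, that $K=A\sqcup B$ with $A,B$ non-empty disjoint closed subsets of the compact set $K$. Then $A$ and $B$ are themselves compact and disjoint, so $3\dl:=|AB|>0$. Define the open sets $U=\{x\in X:|xA|<\dl\}$ and $V=\{x\in X:|xB|<\dl\}$; these are disjoint, for if $x\in U\cap V$ one would get points $a\in A$, $b\in B$ with $|ab|\le|xa|+|xb|<2\dl<3\dl$, contradicting the choice of $\dl$.

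Now choose $n$ so large that $d_H(K_n,K)<\dl$. Then $K_n\ss B_\dl(K)\ss U\cup V$, so $K_n=(K_n\cap U)\sqcup(K_n\cap V)$ is a decomposition into disjoint open (in $K_n$) sets. It remains to check that both pieces are non-empty. Since $A\ss K\ss B_\dl(K_n)$, for any $a\in A$ there is some $x\in K_n$ with $|xa|<\dl$, hence $x\in K_n\cap U$; symmetrically $K_n\cap V\ne\0$. This contradicts the connectedness of $K_n$, so $K$ is connected and therefore $K\in\cCK(X)$.

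The arguments are both short; the only mildly delicate point is isolating the ``separation distance'' $|AB|>0$ from the partition of $K$ (which uses compactness of the pieces) and then converting it into a genuine separation of $K_n$ by open neighborhoods, and I do not anticipate any real obstacle beyond that.
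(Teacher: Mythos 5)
Your proof is correct. Note that the paper states this theorem without any proof (it is quoted as a standard fact of hyperspace theory, cf.\ Illanes--Nadler), so there is no argument of the authors' to compare yours against; both halves of your write-up are the standard ones, and since $\cK(X)$ with $d_H$ is a metric space, checking sequential closedness for $\cCK(X)$ suffices. One cosmetic point: the inclusion $K_n\ss B_\dl(K)\ss U\cup V$ is not quite right as written, since a point with $|xK|=\dl$ need not lie in $U\cup V$; what you actually get from $d_H(K_n,K)<\dl$ is $K_n\ss\{x:|xK|<\dl\}\ss U\cup V$, which is all you use, so the argument stands.
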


\begin{thm}[\cite{BurBurIva01}]\label{thm:CompactInherited}
A space $\cH(X)$ is complete\/ \(totally bounded, compact, boundedly compact\) if and only if $X$ is of the same type.
\end{thm}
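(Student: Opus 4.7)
The plan is to prove each of the four equivalences, handling the forward directions (``only if'') uniformly via an isometric closed embedding of $X$ into $\cH(X)$, and the reverse directions by direct constructions tailored to each property.

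For the forward directions, I would first verify that $\iota\:X\to\cH(X)$, $x\mapsto\{x\}$, is an isometric embedding (already noted in the excerpt) with closed image. Closedness follows from the fact that if $d_H(\{x_n\},F)\to0$ with $F\in\cH(X)$, then $\diam F\le 2\,d_H(\{x_n\},F)\to0$, so $F$ is a singleton. Consequently $X$ is identified with a closed metric subspace of $\cH(X)$, and completeness, total boundedness, and compactness all pass from $\cH(X)$ to $X$. For bounded compactness, note that every closed ball $B_r(x_0)\ss X$ corresponds via $\iota$ to a closed bounded subset of $\cH(X)$, so compactness of $\iota(B_r(x_0))$ in $\cH(X)$ yields compactness of $B_r(x_0)$ in $X$.

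For the reverse direction in the complete case, given a Cauchy sequence $(F_n)$ in $\cH(X)$, I would take
$$
F=\bigcap_{n=1}^\infty\overline{\bigcup_{k\ge n}F_k},
$$
which coincides with the set of all $x\in X$ arising as limits of sequences $x_{n_k}\in F_{n_k}$ with $n_k\to\infty$. Non-emptiness is obtained by using the Hausdorff--Cauchy property to iteratively build a Cauchy sequence $x_{n_k}\in F_{n_k}$ whose limit exists by completeness of $X$. Closedness is clear from the intersection form, and boundedness follows since a Hausdorff-Cauchy sequence is contained in a single Hausdorff ball around $F_1$, which is itself bounded. The key estimate is $d_H(F_n,F)\to0$, which reduces to the two inclusions $F\ss B_\e(F_n)$ and $F_n\ss B_\e(F)$ for all sufficiently large $n$; this diagonal-subsequence step is the chief technical obstacle.

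For the totally bounded case, given $\e>0$ and a finite $\e$-net $N\ss X$, I would show that the finite family $\cN=\bigl\{A:\0\ne A\ss N\bigr\}$ is an $\e$-net in $\cH(X)$: for each $F\in\cH(X)$, the set $N_F=\{a\in N:|aF|<\e\}$ is non-empty and satisfies $d_H(F,N_F)\le\e$ by a direct check (each $a\in N_F$ is within $\e$ of $F$, and each $x\in F$ is within $\e$ of some point of $N$, which then lies in $N_F$). Compactness of $\cH(X)$ for compact $X$ then follows immediately from the equivalence of compactness with completeness plus total boundedness in metric spaces. Finally, for bounded compactness, given a closed bounded $\cF\ss\cH(X)$ with $X$ boundedly compact, the union $\bigcup\cF$ is bounded in $X$ and hence contained in some compact $K\ss X$. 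Then $\cF\ss\cH(K)$, and $\cH(K)$ is compact by the already-proved compact case; since $\cH(K)$ is closed in $\cH(X)$ (Hausdorff limits of subsets of a closed set $K$ stay in $K$), $\cF$ is closed in $\cH(K)$ and therefore compact.
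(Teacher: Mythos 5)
The paper offers no proof of this theorem; it is quoted from \cite{BurBurIva01}, and your argument is essentially the standard one given there (closed isometric embedding $x\mapsto\{x\}$ for the ``only if'' directions; the set of subsequential limits $F=\bigcap_n\overline{\bigcup_{k\ge n}F_k}$ for completeness; subsets of a finite $\e$-net for total boundedness; reduction to $\cH(K)$ for a compact $K$ in the boundedly compact case). All steps check out, including the one you flag as the chief obstacle: the inclusion $F_n\ss B_\e(F)$ follows by the same iterative construction you use for non-emptiness, started from an arbitrary point of $F_n$ along a rapidly Cauchy subsequence, so there is no genuine gap.
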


If $X$ is a continuum, then, by Theorem~\ref{thm:CompactInherited}, the space $\cK(X)=\cH(X)$ is also compact. By Theorem~\ref{thm:XandCKclosed}, $\cCK(X)$ is a closed subset of the compact set $\cK(X)$, and therefore is itself compact. A less trivial fact is that the connectedness of $X$ is also inherited. And even more interesting: $\cCK(X)$ turns out to be path-connected, and for a non-degenerate $X$, one can estimate the dimension of $\cCK(X)$.

\begin{thm}\label{thm:cCKlinearConnectedCont}
For any continuum $X$, the space $\cCK(X)$ is linearly connected continuum\/ {\rm(\cite[Theorem 14.9]{IllanesNadler})}, has trivial shape\/ {\rm(\cite[Theorem 19.10]{IllanesNadler})} and, in particulars, is acyclical in each dimension\/ {\rm(\cite[Theorem 19.3]{IllanesNadler})}.

For a hereditarily indecomposable or locally connected continuum $X$, the space $\cCK(X)$ is contractible\/ {\rm(\cite[Theorem 20.3, 20.14]{IllanesNadler})}.

If the continuum $X$ is non-degenerate, then $\dim\cCK(X)\ge2$ {\rm(\cite[Theorem 22.18]{IllanesNadler})}.
\end{thm}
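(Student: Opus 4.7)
The plan is to derive all five claims from two central tools. The first is a \emph{Whitney map} $\mu : \cCK(X)\to[0,\mu(X)]$: a continuous function with $\mu(\{x\})=0$ for every $x\in X$ and $\mu(A)<\mu(B)$ whenever $A\subsetneq B$; its existence on the compact metric continuum $\cCK(X)$ is classical and rests on a partition-of-unity construction built from the metric on $X$. The second tool is an \emph{order arc}: for any proper $A\in\cCK(X)$, a continuous embedding $\alpha:[0,1]\to\cCK(X)$ with $\alpha(0)=A$, $\alpha(1)=X$, and $\alpha(s)\subsetneq\alpha(t)$ whenever $s<t$. I would construct it by applying Zorn's lemma to the family of nested chains of subcontinua joining $A$ to $X$; compactness of $\cCK(X)$, which follows from Theorems~\ref{thm:XandCKclosed} and~\ref{thm:CompactInherited}, ensures that the Hausdorff-closure of any such chain still consists of subcontinua, so a maximal chain exists and, reparameterized by $\mu$, yields the desired arc.

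Once order arcs are available, linear connectedness is immediate: for any $A,B\in\cCK(X)$, concatenate the arc from $A$ to $X$ with the reverse of the arc from $B$ to $X$. Compactness of $\cCK(X)$ was already noted, so $\cCK(X)$ is a linearly connected continuum. For contractibility in the two distinguished cases, I would build a homotopy $H:\cCK(X)\times[0,1]\to\cCK(X)$ with $H(A,0)=A$ and $H(A,1)=X$ by selecting, continuously in $(A,t)$, a subcontinuum of Whitney value $(1-t)\mu(A)+t\mu(X)$ that contains $A$. If $X$ is hereditarily indecomposable, any two subcontinua both containing $A$ are automatically nested, hence the order arc from $A$ to $X$ is \emph{unique} and varies continuously in $A$, delivering the selection for free. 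If $X$ is locally connected, the selection is furnished by Michael's continuous-selection theorem, whose lower-semicontinuity hypothesis is supplied by local connectedness of $X$. Trivial shape and acyclicity can then be obtained by embedding $X$ in the Hilbert cube, approximating it by a decreasing sequence of locally connected neighborhoods $X_n$, exhibiting $\cCK(X)$ as the inverse limit of the contractible hyperspaces $\cCK(X_n)$, and invoking continuity of \v{C}ech cohomology.

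Finally, to obtain $\dim\cCK(X)\ge 2$ for non-degenerate $X$, I would topologically embed a $2$-cell into $\cCK(X)$. Pick distinct points $x,y\in X$ and order arcs $\alpha,\beta:[0,1]\to\cCK(X)$ running from $\{x\},\{y\}$ respectively to $X$, and consider the continuous map $\Phi(s,t)=\alpha(s)\cup\beta(t)$. The image manifestly contains many continua, but arranging for $\Phi$ to be injective on some non-degenerate square requires controlling the Whitney values and the geometric positions of $\alpha(s)$ and $\beta(t)$ so that distinct pairs $(s,t)$ produce distinct unions. This injectivity step is where I expect the main obstacle to lie --- it is the technical heart of~\cite[Theorem 22.18]{IllanesNadler}, and its rigorous execution passes through shrinking $X$ to a suitable subcontinuum on which Whitney levels separate points in two independent directions.
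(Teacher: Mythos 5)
First, a point of comparison: the paper does not prove this theorem at all --- it is a compendium of known facts about the hyperspace of subcontinua, each attributed to a specific theorem of Illanes and Nadler, and the text simply records the citations. So there is no in-paper proof to measure yours against; what can be judged is whether your sketch would stand on its own. Most of it would: Whitney maps and order arcs are indeed the two standard engines, your derivation of arcwise connectedness by concatenating order arcs through $X$ is the classical Borsuk--Mazurkiewicz argument, the uniqueness of order arcs in the hereditarily indecomposable case (any two subcontinua containing $A$ meet, hence are comparable) is exactly how contractibility is obtained there, and the trivial-shape argument via a nested intersection $\cCK(X)=\bigcap_n\cCK(M_n)$ with $M_n$ contractible Peano neighborhoods, followed by continuity of \v{C}ech cohomology, is faithful to the standard proof.

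Two points, however, are genuine gaps rather than omitted routine detail. First, in the dimension argument the map $\Phi(s,t)=\alpha(s)\cup\beta(t)$ is not even well defined as a map into $\cCK(X)$: for small $s$ and $t$ the continua $\alpha(s)\supset\{x\}$ and $\beta(t)\supset\{y\}$ are in general disjoint, so their union is disconnected and lies outside the hyperspace of subcontinua. The actual proof of \cite[Theorem 22.18]{IllanesNadler} splits into cases --- a decomposable subcontinuum yields a $2$-cell from order arcs inside two \emph{intersecting} pieces, while the hereditarily indecomposable case needs a separate argument --- and your sketch recovers neither branch; the difficulty is not merely injectivity. Second, Michael's selection theorem in its usual form requires closed convex values in a Banach space and does not apply to selecting subcontinua; the standard tool for contractibility of $\cCK(X)$ when $X$ is locally connected is the property of Kelley (which locally connected continua possess), not a convex-valued selection theorem. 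A smaller omission: your order-arc construction quietly skips the ``no gaps'' step --- showing that a maximal nested chain has Whitney image equal to a full interval requires the boundary-bumping lemma, which is the real content of the existence of order arcs, not the Zorn/compactness bookkeeping.
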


\begin{examp}
\begin{enumerate}[leftmargin=*,itemsep=0.3cm]
\item For the standard interval $I$ and the circle $S^1$, the spaces $\cCK(I)$ and $\cCK(S^1)$ are homeomorphic to two-dimensional disk $B^2$ (\cite[ sections 5.1 and 5.2]{IllanesNadler}).
\item The spaces $\cCK_{GH}^c(I)$, $\cCK_{GH}(I^1)$, and $\cCK_{GH}(S^1)$ are homeomorphic to an interval.
\item The space $\cCK_{GH}^c(S^1)$ is homeomorphic to the union of a half-interval and an isolated point. Note that the point $\{S^1\}$ corresponding to the entire circle is at distance $1$ in the $d_{GH}^c$ metric from every proper subset of the circle and is therefore isolated in $\cCK_{GH}^c(S^1)$.
\end{enumerate}
\end{examp}

A continuum $X$ is called \emph{hereditarily indecomposable} if each of its subcontinuum $Y$ cannot be represented as the union of two proper (non-empty and distinct from $Y$) subcontinua.

Recall that we call a topological space $Y$ incomparable with a topological space $X$ if the only continuous mappings from $X$ to $Y$ are mappings to a point.

\begin{prop}\label{prop:LinConnAndContinuum}
Let $X$ be a path-connected space, and $Y$ a hereditarily indecomposable continuum. Then $X$ is incomparable with $Y$. In particular, $2d_{GH}^c(X,Y)\ge\diam X$.
\end{prop}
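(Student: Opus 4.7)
The inequality $2d_{GH}^c(X,Y)\ge\diam X$ will be immediate from Corollary~\ref{cor:2} once incomparability is proved, so the whole task is to show that every continuous map $f\:X\to Y$ is constant. Given $x_1,x_2\in X$, path-connectedness of $X$ yields a path $\g\:[0,1]\to X$ from $x_1$ to $x_2$, and I would analyze the composition $h=f\circ\g\:[0,1]\to Y$. Establishing $f(x_1)=f(x_2)$ thus reduces to showing that the subcontinuum $K:=h([0,1])\ss Y$ is a single point.

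I would record two facts about $K$. First, as a subcontinuum of the hereditarily indecomposable continuum $Y$, each subcontinuum of $K$ is also a subcontinuum of $Y$, so $K$ is itself hereditarily indecomposable. Second, since $K$ is a continuous image of $[0,1]$, the Hahn--Mazurkiewicz theorem guarantees that $K$ is a Peano continuum: a compact, connected, locally connected metric space.

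The main obstacle, and the only step that carries genuine topological content, is the classical observation that a hereditarily indecomposable Peano continuum must be degenerate. Assume, for contradiction, that $K$ is nondegenerate. A nondegenerate Peano continuum is arcwise connected (a standard corollary of Hahn--Mazurkiewicz), so $K$ contains an arc $A$. Picking any interior point $a$ of $A$ splits it as $A=A_1\cup A_2$, the union of two proper subarcs meeting only at $a$. Since $A_1$ and $A_2$ are subcontinua of $K$, they are subcontinua of $Y$; their union exhibits $A$ as decomposable, which contradicts the hereditary indecomposability of $Y$ applied to the subcontinuum $A$. Therefore $K$ is a single point, so $f(x_1)=f(x_2)$ for arbitrary $x_1,x_2\in X$, and $f$ is constant. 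Incomparability of $X$ with $Y$ being established, Corollary~\ref{cor:2} delivers $2d_{GH}^c(X,Y)\ge\diam X$.
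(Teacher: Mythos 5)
Your proof is correct, but it reaches the contradiction by a genuinely different route than the paper. The paper argues directly and elementarily: taking $t_0=\max(f\circ\gamma)^{-1}(y_0)$ and $t_1=\min\bigl[(f\circ\gamma)^{-1}(y_1)\cap[t_0,1]\bigr]$, it writes the subcontinuum $(f\circ\gamma)\bigl([t_0,t_1]\bigr)$ as the union of the two images $(f\circ\gamma)\bigl([t_0,\tau]\bigr)$ and $(f\circ\gamma)\bigl([\tau,t_1]\bigr)$ for any $\tau\in(t_0,t_1)$; the choice of $t_0$ and $t_1$ as extremal preimages is exactly what guarantees $y_1$ misses the first piece and $y_0$ misses the second, so both pieces are proper and the decomposition contradicts hereditary indecomposability. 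The only input is that a continuous image of a closed interval is a continuum. You instead pass through the Hahn--Mazurkiewicz theorem to see that $K=h([0,1])$ is a Peano continuum, invoke arcwise connectivity of nondegenerate Peano continua to extract an arc, and split the arc at an interior point. This is heavier machinery, but it cleanly isolates a reusable intermediate fact --- a hereditarily indecomposable continuum contains no arc, hence no nondegenerate Peano (in particular, no nondegenerate path-connected) subcontinuum --- whereas the paper's argument is self-contained and uses nothing beyond first principles. Both arguments are complete; your appeal to Corollary~\ref{cor:2} for the distance estimate matches the paper's ``in particular'' step.
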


\begin{proof}
Let $f\:X\to Y$ be a continuous mapping such that $f(X)$ contains at least two distinct points $y_0=f(x_0)\ne f(x_1)=y_1$. Since $X$ is path-connected, there exists a continuous mapping $\v\:[0,1]\to X$ such that $\v(0)=x_0$ and $\v(1)=x_1$. Set $t_0=\sup\bigl[(f\circ \v )^{-1}(x_0)\bigr]=\max\bigl[(f\circ\v )^{-1}(x_0)\bigr]$. Clearly, $\v(t_0)=x_0$. We put $t_1=\inf\bigl[(f\circ\v)^{-1}(x_1)\bigr]\cap[t_0,1]=\min\bigl[(f\circ\v)^{-1}(x_1)\bigr]\cap[t_0,1]$. For an arbitrary $t_0<\t<t_1$, the continuum $K=(f\circ\v)\bigl([t_0,t_1]\bigr)\ss Y$ is represented as the union of two proper subcontinua $y_0\in K_0=(f\circ\v)\bigl([t_0,\t]\bigr)\not\ni y_1$ and $y_0\not\in K_1=(f\circ\v)\bigl([\t,t_1]\bigr)\ni y_1$. The resulting representation contradicts the indecomposability of $Y$, i.e., it means the triviality of any continuous mapping $f\:X\to Y$.
\end{proof}

A \emph{chain\/} in a topological space is a finite sequence of its subsets in which only successive elements intersect (they are called \emph{links\/}). If for some $\e>0$ each link in a chain lying in a metric space has diameter at most $\e$, then such a chain is called an \emph{$\e$-chain}. If for every $\e>0$ the continuum is covered by an $\e$-chain, then such a continuum is called \emph{arc-shaped}. A hereditarily indecomposable arc-shaped continuum is called a \emph{pseudoarc}. An example of a non-degenerate pseudoarc that is a subset of the plane was given by Knaster in~\cite{Knaster}. Moise~\cite{Moise} showed that every proper subcontinuum of a pseudoarc is homeomorphic to the pseudoarc itself. Bing~\cite{BingUnique} proved that all pseudo-arcs are homeomorphic to each other. Bing also found in~\cite{B} that almost all continua in the Euclidean space $\R^n$, $n\ge2$, are pseudo-arcs. More formally, a subset $Y$ of a topological space $X$ is called a \emph{$G_\dl $-set} if $Y$ is equal to at most a countable intersection of open subsets of $X$. We say that \emph{most elements of a complete metric space $X$ are elements of $Y\ss X$} if $Y$ is a dense $G_\dl$-subset of $X$. Recall that the space $\cCK(\R^n)$ of all continua in $\R^n$ is equipped with the Hausdorff metric and the corresponding metric topology.

\begin{thm}[Bing~\cite{B}]\label{thm:Most}
Most continua in $\R^n$, $n\ge2$, i.e., most points from $\cCK(\R^n)$ are pseudo-arcs.
\end{thm}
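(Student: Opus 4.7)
The plan is to prove the statement as a dense $G_\dl$ assertion: the family $\cP$ of pseudo-arcs is a dense $G_\dl$-subset of $\cCK(\R^n)$. Since $\R^n$ is boundedly compact, Theorems~\ref{thm:CompactInherited} and~\ref{thm:XandCKclosed} imply that $\cCK(\R^n)$ is a complete metric space, so the Baire category theorem is available and makes the word ``most'' precise in the sense required.

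First I would use the characterization, already recorded in the text, that a non-degenerate continuum is a pseudo-arc if and only if it is simultaneously arc-shaped \(chainable\) and hereditarily indecomposable, so $\cP=\cA\cap\mathcal{HI}$ inside $\cCK(\R^n)$. For each $m\in\N$ let $A_m$ denote the set of continua admitting an open chain cover of mesh $<1/m$; a sufficiently small Hausdorff perturbation of such a $K$ is still covered by the same open chain and still connects along it, so $A_m$ is open and $\cA=\bigcap_m A_m$. For hereditary indecomposability I would invoke the classical ``crooked refinement'' reformulation: a continuum is hereditarily indecomposable iff every finite open cover admits an open refinement in which any link meeting two non-adjacent links of the original cover is forced to zigzag through all intermediate links. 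The set of $K$ passing the $m$-th crookedness test is open in $\cCK(\R^n)$, and intersecting over $m$ presents $\mathcal{HI}$ as a $G_\dl$.

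The substantive step is density. Given a non-empty open $\cU\ss\cCK(\R^n)$ I would pick $K\in\cU$ and $\e>0$ with the $d_H$-ball of radius $\e$ about $K$ sitting inside $\cU$, and then inductively construct, inside the Euclidean $\e$-neighborhood $U_\e(K)\ss\R^n$, a sequence of open chain coverings $\mathcal{C}_1\succ\mathcal{C}_2\succ\cdots$ of mesh tending to zero, where $\mathcal{C}_{k+1}$ is produced from $\mathcal{C}_k$ by Bing's folding procedure: every link of $\mathcal{C}_{k+1}$ meeting two links of $\mathcal{C}_k$ that are several indices apart must pass through all intermediate links. The continuum $L=\bigcap_k\overline{\bigcup\mathcal{C}_k}$ is then simultaneously chainable and hereditarily indecomposable, hence a pseudo-arc, and arranging the unions $\bigcup\mathcal{C}_k$ to be $\e$-dense in $K$ gives $d_H(L,K)<\e$, so $L\in\cU$. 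The assumption $n\ge 2$ enters in an essential way precisely here, since bending a chain back on itself without self-intersection requires at least two dimensions of transverse room.

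The principal obstacle is therefore the inductive crooked-refinement construction in $\R^n$, $n\ge 2$: establishing that any open chain in a neighborhood of a continuum can be refined to a strictly crooked chain of arbitrarily small mesh, without leaving the prescribed neighborhood. Once this folding lemma is in hand, combining it with the $G_\dl$ description from the previous step and applying Baire's theorem to the complete space $\cCK(\R^n)$ yields the claim.
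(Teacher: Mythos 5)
This statement is imported by the paper as a classical result of Bing, with a citation to~\cite{B} and no proof given, so there is no internal argument to compare against; your sketch has to be judged on its own. Structurally it is the right outline of the standard Baire-category argument: $\cCK(\R^n)$ is complete (closed in $\cK(\R^n)\ss\cH(\R^n)$, which is complete by Theorem~\ref{thm:CompactInherited}), the pseudo-arcs are exactly the arc-shaped hereditarily indecomposable continua by the paper's own definition, and presenting each of these two families as a $G_\dl$ and proving density of each (or of the intersection, as you do) is precisely how the theorem is established in the literature.

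However, as a proof your text is incomplete at exactly the point where the theorem lives. The $G_\dl$ descriptions you give are routine (the openness of $A_m$ follows from a Lebesgue-number argument for the compact $K$, and the crookedness reformulation of hereditary indecomposability is standard), but the density step is only described, not carried out: you name the ``inductive crooked-refinement construction'' and then declare it to be ``the principal obstacle.'' That folding lemma --- that any open chain covering a neighborhood of a given continuum in $\R^n$, $n\ge2$, can be refined, within a prescribed $\e$-neighborhood, by an arbitrarily fine chain that is crooked in every earlier chain, and that the nested intersection $L=\bigcap_k\overline{\bigcup\mathcal{C}_k}$ is a non-degenerate continuum $\e$-close to $K$ in $d_H$ --- is the entire substance of Bing's theorem, and without it the argument establishes only that the pseudo-arcs form a $G_\dl$, not that they are dense. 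You correctly identify where the hypothesis $n\ge2$ enters, which shows you understand what is missing, but a referee would not accept ``once this folding lemma is in hand'' as a substitute for proving it; either supply the construction or, as the paper does, cite~\cite{B} for the whole statement.
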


It follows that the Gromov--Hausdorff distance from any subcontinuum in $\R^n$, $n\ge2$, to the set of all pseudo-arcs is zero, but the continuous Gromov--Hausdorff distance from any path-connected subcontinuum to the set of all pseudo-arcs is equal to the diameter of this subcontinuum.

Let $X$ be a continuum. The distances $d_{GH}^c$ and $d_{GH}$ are pseudometrics on the set $\cCK(X)$. The quotient spaces $\cCK(X)$ by the pseudometrics $d_{GH}^c$ and $d_{GH}$ are denoted by $\cCK_{GH}^c(X)$ and $\cCK_{GH}(X)$, respectively. The projections $p_{GH}^c\:\cCK_{GH}^c(X)\to \cCK_{GH}(X)$ and $p_H\:\cCK_H(X)\to \cCK_{GH}(X)$ are non-expanding mappings, so they are continuous.

\begin{prop}
The space $\cCK_{GH}(X)$ is a path-connected continuum.
\end{prop}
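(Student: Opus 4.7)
The plan is to realize $\cCK_{GH}(X)$ as the continuous image of the classical hyperspace $\cCK(X)$ and transfer the topological properties of the latter.

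First, I would invoke Theorem~\ref{thm:cCKlinearConnectedCont}, which asserts that for any continuum $X$ the hyperspace $\cCK(X)$, equipped with the Hausdorff metric $d_H$, is a path-connected continuum (compact, Hausdorff, connected, and even linearly connected). Next, I would consider the projection $p_H\:\cCK(X)\to\cCK_{GH}(X)$ that sends each subcontinuum to its equivalence class under the pseudometric $d_{GH}$. This map is surjective by construction, and it is non-expanding because $d_{GH}(A,B)\le d_H(A,B)$ for any $A,B\in\cCK(X)$ (the tautological embedding of $A$ and $B$ into the ambient $X$ realizes the Gromov--Hausdorff distance as at most the Hausdorff distance); non-expanding maps between (pseudo)metric spaces are continuous.

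Once continuity of $p_H$ is in hand, the conclusion follows from the standard fact that continuous surjections preserve compactness, connectedness, and path-connectedness. Thus $\cCK_{GH}(X)=p_H\bigl(\cCK(X)\bigr)$ is compact, connected, and path-connected. Since $d_{GH}$ induces an honest metric on the quotient $\cCK_{GH}(X)$ (by the very definition of the quotient by a pseudometric), the space is metrizable and in particular Hausdorff. Combining these observations, $\cCK_{GH}(X)$ is a path-connected continuum.

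The only point requiring a moment's care is the continuity of $p_H$ with respect to the correct topologies on both sides, i.e.\ the verification that $d_{GH}\le d_H$ on $\cCK(X)$; this is essentially a one-line observation using the inclusion $A\cup B\ss X$, so I do not expect it to cause any real difficulty. Everything else is a routine application of preservation theorems for continuous images.
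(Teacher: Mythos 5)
Your proposal is correct and follows essentially the same route as the paper: the paper likewise cites Theorem~\ref{thm:cCKlinearConnectedCont} for $\cCK(X)$ being a linearly connected continuum and then observes that the non-expanding (hence continuous) projection $p_H$ transfers compactness and path-connectedness to the quotient $\cCK_{GH}(X)$. Your additional remarks on $d_{GH}\le d_H$ and on metrizability of the quotient are fine but not needed beyond what the paper already records.
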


\begin{proof}
By Theorem~\ref{thm:cCKlinearConnectedCont} the space $\cCK(X)$ is a linearly connected continuum.
Now the result follows from the fact that a continuous mapping preserves both compactness and path connectivity.
\end{proof}

H. Cook in~\cite{C67} constructed an example of a hereditarily indecomposable continuum $M_1$ such that any two of its different non-degenerate subcontinua are incomparable.

\begin{thm}[\cite{C67}]
The continuum $M_1$ is one-dimensional in the sense of Lebesgue dimension, therefore $M_1$ is embedded in $\R^3$, but no one non-degenerate subcontinuum of it is embeddable in the plane.
\end{thm}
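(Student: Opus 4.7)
The plan is to handle the three assertions of the theorem in turn. For $\dim M_1=1$, I would return to Cook's construction in~\cite{C67}: $M_1$ is realized as an inverse limit (or a closed subset) of one-dimensional polyhedra, which have Lebesgue dimension at most one and whose dimension is inherited by the limit, while non-degeneracy of $M_1$ forces $\dim M_1\ge 1$. The embedding $M_1\hookrightarrow\R^3$ is then immediate from the Menger--N\"obeling theorem, which states that a separable metric space of Lebesgue dimension $n$ embeds topologically in $\R^{2n+1}$; with $n=1$ this yields the required embedding into $\R^3$.

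The main obstacle is the third assertion: that no non-degenerate subcontinuum $N\subseteq M_1$ embeds in $\R^2$. My strategy is to assume the contrary and produce a pair of distinct non-degenerate subcontinua $N_1,N_2\subseteq N$ together with a non-constant continuous map $N_1\to N_2$, which would contradict Cook's incomparability property (recall that two distinct non-degenerate subcontinua of $M_1$ are incomparable in the sense of the definition introduced earlier in the paper). Since $M_1$ is hereditarily indecomposable, so is $N$, and once $N$ is placed in $\R^2$ one has access to the rich theory of planar hereditarily indecomposable continua: $N$ admits arbitrarily fine chain covers, so one can extract two overlapping subchains whose closures are distinct non-degenerate subcontinua of $N$ and then build a non-constant continuous map between them by a natural planar retraction or projection inside a small neighborhood in $\R^2$.

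The hard part will be arranging this last step so that the two subcontinua produced are provably distinct inside $M_1$ and the map between them is provably non-constant; this is exactly where the detailed planar geometry of hereditarily indecomposable continua enters, and it is the heart of Cook's original argument, so I would cite~\cite{C67} for the technical execution rather than attempt a self-contained re-derivation. It is worth noting which naive substitutes fail: applying Bing's density theorem~\cite{B} to produce a pseudo-arc close to $N$, or Mazurkiewicz's theorem to produce a pseudo-arc surjecting onto $N$, both yield continuous maps whose source is not itself a subcontinuum of $M_1$, so neither triggers Cook's incomparability. The argument is therefore forced to live entirely inside $N$ and exploit planarity internally to manufacture two distinct subcontinua of $M_1$ together with a non-constant continuous map between them.
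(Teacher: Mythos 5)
First, a point of calibration: the paper does not prove this statement at all --- it is imported verbatim from Cook's paper \cite{C67}, so the only ``proof'' on offer there is the citation. Your decision to delegate the technical core to \cite{C67} is therefore consistent with what the authors themselves do, and your handling of the first two assertions is standard and correct: an inverse limit of one-dimensional polyhedra has covering dimension at most one, non-degeneracy forces dimension at least one, and the Menger--N\"obeling theorem embeds any one-dimensional separable metric (in particular compact) space in $\R^{3}$.

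The volunteered outline of the third assertion, however, rests on a false premise. Saying that a planar hereditarily indecomposable continuum ``admits arbitrarily fine chain covers'' is saying that it is chainable, and that is not a consequence of planarity: the pseudo-circle is a planar, hereditarily indecomposable, non-chainable continuum. Two further steps would also fail as written. ``Overlapping subchains'' produce intersecting subcontinua, and in a hereditarily indecomposable continuum two intersecting subcontinua are nested, so the only evident non-constant map between them is the inclusion --- which does not contradict Cook's property (every map of a subcontinuum into $M_1$ is constant or the inclusion); you need non-nested, ideally disjoint, subcontinua. And there is no ``natural planar retraction or projection'' onto such continua: they are nowhere locally connected and are not absolute neighborhood retracts. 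Your global strategy --- contradict incomparability by manufacturing a non-constant map between two distinct non-degenerate subcontinua of $N$ --- is the right one, but the planar input needed is subtler: one must show that a non-degenerate planar hereditarily indecomposable continuum contains, say, a non-degenerate chainable subcontinuum, whence a pseudo-arc, whence (by Bing's uniqueness theorem \cite{BingUnique}) two disjoint non-degenerate homeomorphic subcontinua. That planar step is genuinely non-trivial, which is exactly why the theorem is cited rather than proved; as sketched, your version of it would not survive execution.
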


Let us apply Corollary~\ref{cor:2=}.

\begin{cor}\label{cor:GHCont-intrinsic}
Let $K$ and $H$ be arbitrary distinct subcontinua in the Cook continuum $M_1$, then $2d_{GH}^c(K,H)=\max\{\diam H,\diam K\}$. In particular, the mapping $p_c^{-1}$ is discontinuous on the entire $\cCK_{GH}(M_1)$, except for the one-point space.
\end{cor}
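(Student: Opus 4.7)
The plan is to reduce the formula to a direct application of Corollary~\ref{cor:2=}. The hypothesis of that corollary, mutual incomparability, is exactly what Cook's construction provides for $M_1$: any two distinct non-degenerate subcontinua are incomparable, i.e., every continuous map between them is constant. If one of $K, H$ is a singleton, mutual incomparability is automatic, since every continuous map into or out of a one-point space is trivial. So in every case Corollary~\ref{cor:2=} yields $2d^c_{GH}(K,H)=\max\{\diam K,\diam H\}$. As a sanity check, when $K=\{p\}$ is a singleton this reproduces Property~(\ref{prop:GHelementProps:3}) of Proposition~\ref{prop:GHelementProps}, and the singleton--singleton case reduces to $0=\max\{0,0\}$.

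For the discontinuity of $p_c^{-1}$ at every non-trivial point of $\cCK_{GH}(M_1)$, I fix a non-degenerate subcontinuum $K$ of $M_1$. By Theorem~\ref{thm:cCKlinearConnectedCont} the hyperspace $\cCK(M_1)$ is a linearly connected continuum; being a connected metrizable space with more than one element, it has no isolated points. Hence I can choose a sequence $H_n\in\cCK(M_1)$ with $H_n\ne K$ as subsets of $M_1$ and $d_H(H_n,K)\to 0$, so that $d_{GH}(H_n,K)\le d_H(H_n,K)\to 0$. To promote $H_n\ne K$ to $[H_n]\ne[K]$ in the quotient $\cCK_{GH}(M_1)$, I check that $H_n$ is not isometric to $K$: if $H_n$ is a singleton this fails on diameter grounds, while if $H_n$ is non-degenerate, any isometry $H_n\to K$ would be a non-constant continuous map between distinct non-degenerate subcontinua of $M_1$, contradicting Cook's incomparability. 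By Theorem~\ref{thm:compGHzero} this gives $[H_n]\ne[K]$. Meanwhile, the formula from the first part gives $2d^c_{GH}(H_n,K)=\max\{\diam K,\diam H_n\}\ge \diam K>0$, so $p_c^{-1}([H_n])$ stays bounded away from $p_c^{-1}([K])$ while $[H_n]\to[K]$, which is the required discontinuity at $[K]$.

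At the exceptional point $[\D_1]\in\cCK_{GH}(M_1)$, Property~(\ref{prop:GHelementProps:3}) of Proposition~\ref{prop:GHelementProps} gives $2d^c_{GH}(\D_1,X)=\diam X=2d_{GH}(\D_1,X)$ for every non-empty metric space $X$, so both metrics agree on a neighbourhood of $[\D_1]$ and $p_c^{-1}$ is continuous there. The whole proof is essentially a bookkeeping exercise combining Corollary~\ref{cor:2=} with Cook's theorem; the only mild obstacle is ensuring that the approximating continua $H_n$ in the discontinuity argument are genuinely non-isometric to $K$, which is handled by the same Cook incomparability that drives the first half.
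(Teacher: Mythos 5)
Your proposal is correct and follows exactly the route the paper intends: Cook's incomparability of distinct non-degenerate subcontinua feeds directly into Corollary~\ref{cor:2=} (with the singleton cases handled trivially), and the discontinuity of $p_c^{-1}$ at each non-degenerate $[K]$ comes from Hausdorff-approximating $K$ by distinct subcontinua, on which $d_{GH}$ is small while $2d^c_{GH}\ge\diam K>0$. The extra checks you supply (non-isolatedness of points of $\cCK(M_1)$, non-isometry of the approximants, continuity at $[\D_1]$ via Property~(\ref{prop:GHelementProps:3})) are all sound and merely make explicit what the paper leaves unsaid.
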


\begin{rk}
According to Item~(\ref{prop:GHelementProps:4}) of Proposition~\ref{prop:GHelementProps}, the value $d_{GH}^c(K,H)$ is the maximum possible value of the $d_{GH}^c$-distance between $H$ and $K$.
\end{rk}

\section{Continuous GH-Distance is Intrinsic}

Let $X$ and $Y$ be arbitrary metric spaces such that $2d^c_{GH}(X,Y)<\infty$. Choose an arbitrary $\e>0$, then there exist continuous $X\torllim^f_g Y$ such that $\dis R_{f,g}<2d^c_{GH}(X,Y)+2\e$. For each $t\in(0,1)$, define a metric $d_t$ on $R:=R_{f,g}$ as follows:
$$
d_t\bigl((x,y),(x',y')\bigr)=(1-t)|xx'|+t\,|yy'|,
$$
and we denote the resulting metric space by $R_t$. We extend $R_t$ by setting $R_0=X$ and $R_1=Y$.

\begin{thm}\label{thm:GHCont-intrinsic}
In the notation introduced above, we have $d^c_{GH}(R_t,R_s)\le |ts|\,\dis R$. In particular, the mapping $\g \:[0,1]\to \GH^c$ is a continuous curve whose length $|\g |$ satisfies the inequality $|\g |<d^c_{GH}(X,Y)+\e$, and the continuous Gromov--Hausdorff distance is intrinsic.
\end{thm}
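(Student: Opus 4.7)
The plan is to establish a Lipschitz estimate of the form $d^c_{GH}(R_t,R_s)\le C\,|t-s|\,\dis R$ by exhibiting explicit continuous mappings between $R_t$ and $R_s$ whose distortion and codistortion can be read off directly from the convex combination defining $d_t$, and then to integrate this estimate over partitions of $[0,1]$. For interior parameters $s,t\in(0,1)$ both $d_t$ and $d_s$ are genuine metrics on $R$ generating the product topology on $R\ss X\x Y$, so the natural candidates are $F=G=\Id_R$ viewed as maps $R_t\to R_s$ and $R_s\to R_t$; these are bi-Lipschitz homeomorphisms. For an endpoint, say $s=0$ in which case $R_0=X$, I would instead take the projection $\pi_X\colon R_t\to X$ together with the continuous section $\iota_f\colon X\to R_t$ given by $\iota_f(x)=(x,f(x))$ (well-defined because $f\ss R$, continuous because $f$ is).

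The central computation is the identity
$$
d_s(\alpha,\beta)-d_t(\alpha,\beta)=(s-t)\bigl(|yy'|-|xx'|\bigr),\qquad \alpha=(x,y),\ \beta=(x',y')\in R,
$$
which at one stroke bounds both distortions of $\Id_R$ and the codistortion of the pair by $|t-s|\,\dis R$; by formula~(\ref{eq:7}) this gives $d^c_{GH}(R_t,R_s)\le\tfrac{1}{2}|t-s|\,\dis R$ for $s,t\in(0,1)$. At an endpoint the three quantities $\dis\pi_X$, $\dis\iota_f$ and $\codis(\pi_X,\iota_f)$ unfold into expressions of the form $t\cdot\bigl||\cdot|-|\cdot|\bigr|$ in which both pairs of points appearing under the outer absolute values lie in $R$; in particular the codistortion produces $t\,\bigl||y'f(x)|-|x'x|\bigr|\le t\,\dis R$ because $(x',y')$ and $(x,f(x))$ both sit in $R$. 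This yields $d^c_{GH}(X,R_t)\le\tfrac{t}{2}\dis R$, and a symmetric computation with $\pi_Y$ and $\iota_g$ gives $d^c_{GH}(Y,R_t)\le\tfrac{1-t}{2}\dis R$.

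Once these Lipschitz bounds are in hand, continuity of $\g\colon[0,1]\to\GH^c$ is immediate, and for any partition $0=t_0<t_1<\cdots<t_n=1$ of $[0,1]$ the length bound telescopes into
$$
\sum_{i=0}^{n-1} d^c_{GH}(R_{t_i},R_{t_{i+1}})\le \tfrac{1}{2}\dis R < d^c_{GH}(X,Y)+\e,
$$
so $|\g|< d^c_{GH}(X,Y)+\e$. Because $\e>0$ is arbitrary and the length of any continuous curve always dominates the distance between its endpoints (triangle inequality, Proposition~\ref{prop:triangle}), the intrinsic distance from $X$ to $Y$ coincides with $d^c_{GH}(X,Y)$, which is exactly the claim that $d^c_{GH}$ is intrinsic. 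The only step requiring real care is the codistortion bookkeeping at the endpoints: one must verify that the pairs of points appearing inside the outer absolute values both lie in $R$ so that $\dis R$ applies. Everything else is routine algebra with the convex combination.
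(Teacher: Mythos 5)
Your proposal is correct and follows essentially the same route as the paper: the identity map of $R$ for interior parameters, and the section $x\mapsto(x,f(x))$ together with the projection $(x,y)\mapsto x$ at the endpoints, with the same distortion/codistortion computations (including the key observation that both pairs entering the endpoint codistortion lie in $R$) and the same telescoping length estimate. No gaps.
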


\begin{proof}
Let $0<t,\,s<1$, and $R'\in \cR(R_t,R_s)$ be the correspondence generated by the identity mapping. Then
\begin{multline*}
\dis R'=\sup_{(x,y),(x',y')\in R}\Bigl|d_t\bigl((x,y),(x',y')\bigr)-d_s\bigl((x,y),(x',y')\bigr)\Bigr|=\\ =
\sup_{(x,y),(x',y')\in R}|ts|\,\bigl||xx'|-|yy'|\bigr|=|ts|\,\dis R.
\end{multline*}
Further, for $t=0$, $X=R_t$ and $R_s$, $0<s<1$, we choose the correspondence $R_{f',g'}$  as $R'\in \cR(R_t,R_s)$, where $f'(x)=\bigl(x,f(x)\bigr)$ for all $x\in X$, and $g'\bigl((x,y)\bigr)=x$. The mapping $f'$ is continuous, since it is the restriction to $X\times f(X)$ of the mapping $x\mapsto\bigl(x,f(x)\bigr)$ from $X$ to $X\times Y$ with continuous coordinate mappings. The mapping $g'$ is also continuous as the restriction of the projection $X\times Y\to X$. Further,
\begin{multline*}
\dis f'=\sup_{x,x'\in X}\Bigl||xx'|-(1-s)|xx'|-s\,\bigl|f(x)f(x')\bigr|\Bigr|=\\ =
\sup_{x,x'\in X}s\,\Bigl||xx'|-\bigl|f(x)f(x')\bigr|\Bigr|=
|ts|\,\dis f\le |ts|\,\dis R;
\end{multline*}
$$
\dis g'=\sup_{(x,y),(x',y')\in R}\bigl||xx'|-(1-s)|xx'|-s\,|yy'|\bigr|=
\sup_{(x,y),(x',y')\in R}s\,\bigl||xx'|-|yy'|\bigr|=|ts|\,\dis R;
$$
\begin{multline*}
\codis (f',g')=\sup_{x\in X,\,(x',y')\in R}\biggl|\Bigl|x\,g'\bigl((x',y')\bigr)\Bigr|-
\bigl|(x',y')\,f'(x)\bigr|\biggr|=\\ =
\sup_{x\in X,\,(x',y')\in R}\Bigl||xx'|-(1-s)|xx'|-s\,\bigl|y'\,f(x)\bigr|\Bigr|=\\ =
\sup_{(x,f(x)),\,(x',y')\in R}s\,\Bigl||xx'|-\bigl|y'\,f(x)\bigr|\Bigr|\le|ts|\,\dis R.
\end{multline*}
Thus, we have shown that $\dis R'\le |ts|\,\dis R$ also in the case $t=0$ and $0<s<1$. The case $0<t<1$ and $s=1$ is analyzed similarly. Finally, for $t=0$ and $s=1$, we set $R'=R$, so that here $\dis R'=|ts|\,\dis R$. Thus, for all $0\le t,\,s\le 1$, we have
$$
d^c_{GH}(R_t,R_s)\le\frac12|ts|\,\dis R,
$$
from which it immediately follows that the mapping $\g\:t\mapsto R_t$ is continuous with respect to $d^c_{GH}$, and the length $|\g|$ of the curve $\g$ does not exceed $\frac12\dis R<d^c_{GH}(X,Y)+\e$. Since $\e>0$ can be chosen arbitrarily small, we conclude that the distance $d^c_{GH}$ is intrinsic.
\end{proof}

As in the standard theory, the correspondence $R_{f,g}\in \cR(X,Y)$ is called \emph{optimal} if $2d^c(X,Y)=\dis R_{f,g}$.

\begin{cor}
If the correspondence $R=R_{f,g}$ is optimal, then the curve $R_t$ constructed above is the shortest geodesic, the length of which is equal to the distance between its ends.
\end{cor}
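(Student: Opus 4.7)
The plan is to show that, under optimality, the upper bound from Theorem~\ref{thm:GHCont-intrinsic} becomes an equality, and then to deduce that $\gamma\:t\mapsto R_t$ is a shortest geodesic (in the sense that the length of any subarc equals the distance between its endpoints).

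First, I would apply Theorem~\ref{thm:GHCont-intrinsic} in the optimal case. The theorem was stated for $\e>0$, but its proof gives the pointwise inequality $d^c_{GH}(R_t,R_s)\le\frac12|t-s|\dis R$ without any dependence on~$\e$. So in the optimal case, where $\dis R=2d^c_{GH}(X,Y)$, we obtain
$$
d^c_{GH}(R_t,R_s)\le|t-s|\,d^c_{GH}(X,Y)\quad\text{for all }0\le s,t\le 1.
$$

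Next I would establish the reverse inequality from the triangle inequality for $d^c_{GH}$ (Proposition~\ref{prop:triangle}). For $0\le s\le t\le 1$:
$$
d^c_{GH}(X,Y)=d^c_{GH}(R_0,R_1)\le d^c_{GH}(R_0,R_s)+d^c_{GH}(R_s,R_t)+d^c_{GH}(R_t,R_1).
$$
Applying the upper bound from the previous paragraph to the first and third terms gives $d^c_{GH}(R_s,R_t)\ge(t-s)\,d^c_{GH}(X,Y)$. Combining this with the previous inequality yields the exact identity
$$
d^c_{GH}(R_s,R_t)=(t-s)\,d^c_{GH}(X,Y)\quad\text{for all }0\le s\le t\le 1.
$$

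From this equality it follows (by subdividing any partition and summing the distances) that for every $0\le a\le b\le 1$, the length of the restriction $\gamma|_{[a,b]}$ equals $(b-a)\,d^c_{GH}(X,Y)=d^c_{GH}(R_a,R_b)$. In particular, $|\gamma|=d^c_{GH}(X,Y)=d^c_{GH}(R_0,R_1)$, so $\gamma$ is a shortest curve between its endpoints, and since every subarc also realizes the distance between its endpoints, $\gamma$ is a geodesic in the metric sense.

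There is no real obstacle: the work was already done in Theorem~\ref{thm:GHCont-intrinsic}, and the corollary is essentially a bookkeeping matter of squeezing a two-sided inequality out of the optimality assumption. The only care needed is to note that the proof of Theorem~\ref{thm:GHCont-intrinsic} yields the pointwise bound $d^c_{GH}(R_t,R_s)\le\tfrac12|t-s|\dis R$, so that setting $\dis R=2d^c_{GH}(X,Y)$ is legitimate without the additive $\e$ slack.
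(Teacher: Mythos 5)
Your argument is correct and is exactly the intended derivation: the paper leaves this corollary without proof, treating it as immediate from the bound $d^c_{GH}(R_t,R_s)\le\frac12|t-s|\dis R$ established in the proof of Theorem~\ref{thm:GHCont-intrinsic}, which in the optimal case combines with the triangle inequality to force $d^c_{GH}(R_s,R_t)=(t-s)\,d^c_{GH}(X,Y)$ exactly as you show. Your observation that the $\e$-slack disappears once $f,g$ are chosen with $\dis R_{f,g}=2d^c_{GH}(X,Y)$ is the right point of care.
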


\section{Incompleteness}

\begin{prop}\label{prop:GHCNotComplete}
The continuous Gromov--Hausdorff distance restricted to the space $\cM^c$ of metric compacts is not complete.
\end{prop}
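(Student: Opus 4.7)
The strategy is to exhibit a sequence of finite (hence compact) metric spaces that is Cauchy with respect to $d^c_{GH}$ but cannot converge to any compact metric space. A natural candidate is a sequence of increasingly fine discrete nets of the unit interval: set
$$
X_n=\bigl\{0,\,1/n,\,2/n,\,\ldots,\,1\bigr\}\ss[0,1],
$$
endowed with the induced Euclidean metric. Each $X_n$ is finite and discrete with $s(X_n)=1/n>0$, and in particular $X_n\in\cM^c$.

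First I would verify that $(X_n)$ is Cauchy in $d^c_{GH}$. Since each $X_n$ is discrete, Proposition~\ref{prop:descret} yields $d^c_{GH}(X_n,X_m)=d_{GH}(X_n,X_m)$. Because $X_n$ is a $\tfrac1n$-net in $[0,1]$, we have $d_{GH}(X_n,[0,1])\le d_H(X_n,[0,1])\le \tfrac1n$; the triangle inequality then gives $d_{GH}(X_n,X_m)\le \tfrac1n+\tfrac1m$, and the Cauchy property in $d^c_{GH}$ follows.

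Next I would show that $(X_n)$ has no limit in $(\cM^c,d^c_{GH})$. Suppose $Z\in\cM^c$ and $d^c_{GH}(X_n,Z)\to 0$. By Property~(\ref{prop:GHelementProps:8}) of Proposition~\ref{prop:GHelementProps}, also $d_{GH}(X_n,Z)\to 0$. Combined with $d_{GH}(X_n,[0,1])\to 0$ and the triangle inequality, this yields $d_{GH}(Z,[0,1])=0$. Since both $Z$ and $[0,1]$ are compact, Theorem~\ref{thm:compGHzero} forces $Z$ to be isometric to $[0,1]$. But $[0,1]$ is connected of diameter~$1$ and each $X_n$ is totally disconnected, so Proposition~\ref{prop:ConnectDiscrete} gives $2d^c_{GH}(X_n,[0,1])\ge\diam[0,1]=1$, whence $d^c_{GH}(X_n,Z)\ge 1/2$ for all $n$, contradicting the assumption. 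Therefore no compact limit exists, proving $\cM^c$ is not complete under $d^c_{GH}$.

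I do not anticipate a serious obstacle here: all three ingredients---the coincidence of $d_{GH}$ and $d^c_{GH}$ on discrete spaces, the continuity of the identification map $p_c$ in the form of Property~(\ref{prop:GHelementProps:8}), and the lower bound from Proposition~\ref{prop:ConnectDiscrete}---are already established in the paper, and the only mild care required is to make sure the Cauchy sequence is formulated within $\cM^c$ (each $X_n$ is finite) and that the would-be limit is ruled out by combining the classical rigidity Theorem~\ref{thm:compGHzero} with the topological obstruction to continuous approximation.
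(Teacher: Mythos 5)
Your proposal is correct and follows essentially the same route as the paper: the same sequence $X_n=\{i/n\}_{i=0}^n$, the Cauchy property via coincidence of $d_{GH}$ and $d^c_{GH}$ on discrete (finite) spaces, identification of the only possible limit as $[0,1]$ through the ordinary Gromov--Hausdorff distance and Theorem~\ref{thm:compGHzero}, and the contradiction via Proposition~\ref{prop:ConnectDiscrete}. The only difference is that you spell out the uniqueness of the limit in $\cM$ in slightly more detail than the paper does.
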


\begin{proof}
Consider the sequence $X_n=\{i/n\}_{i=0}^n$ of subsets of the interval $[0,1]$ with induced distance. Since the continuous and ordinary GH-distances coincide on finite metric spaces, this sequence is fundamental in $\cM^c$. Suppose that $X_n\tolim^{d^c_{GH}}X$, then $X_n\tolim^{d_{GH}}X$, and since $\mathcal{M}$ is a metric space, the limit is uniquely defined, hence $X=[0,1]$. However, by Proposition~\ref{prop:ConnectDiscrete} we have $d^c(X_n,X)=1/2$, so the sequence $X_n$ does not converge to $X$ in $\cM^c$.
\end{proof}

The validity of Proposition~\ref{prop:GHCNotComplete} is due to the fact that we have restricted the class of metric spaces (compacta) in which we seek the limit of the given fundamental sequence. However, in the class of all metric spaces, this sequence has a limit, namely, it is each zero-dimensional dense subset of the interval. Let us construct a fundamental sequence for which no metric space is its limit w.r.t\. the continuous Gromov-Hausdorff metric.

We will need additional technical result.

\begin{lem}\label{lemmadiametr}
If for a subset $A\ss X$ and a mapping $f\:X\to Y$ at least one of the sets $A$ or $f(A)$ has a finite diameter, then it holds $\dis f\ge\bigl|\diam f(A)-\diam A\bigr|$.
\end{lem}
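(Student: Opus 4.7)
The plan is to unfold the definition of $\dis f$ on pairs drawn from $A$ and take suprema in two directions. Concretely, for any $x,x'\in A$ the very definition of distortion (Formula~(\ref{eq:2})) gives
$$
-\dis f\le |f(x)f(x')|-|xx'|\le \dis f,
$$
so $|f(x)f(x')|\le|xx'|+\dis f\le \diam A+\dis f$ and $|xx'|\le|f(x)f(x')|+\dis f\le \diam f(A)+\dis f$. Taking the supremum over $x,x'\in A$ on each left-hand side yields the two inequalities
$$
\diam f(A)\le \diam A+\dis f\qquad\text{and}\qquad \diam A\le \diam f(A)+\dis f.
$$

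Next I would deal with the finite-diameter hypothesis, which is needed only to make the absolute value $|\diam f(A)-\diam A|$ meaningful in $[0,\infty]$. If $\dis f=\infty$, the claim is vacuous, so assume $\dis f<\infty$. Then the two displayed inequalities show that finiteness of $\diam A$ forces finiteness of $\diam f(A)$ and vice versa; hence under the hypothesis of the lemma both diameters are automatically finite and one may legitimately rewrite the pair of estimates as
$$
\bigl|\diam f(A)-\diam A\bigr|\le\dis f,
$$
which is the desired conclusion.

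I do not foresee any real obstacle: the argument is a one-line reshuffling of the definition of distortion followed by a sup. The only mildly delicate point is the bookkeeping with the value $\infty$, handled in the short paragraph above; everything else is routine and self-contained, requiring no further results from the paper.
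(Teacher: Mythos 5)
Your proof is correct and follows essentially the same route as the paper: both arguments restrict the defining inequality $\bigl||xx'|-|f(x)f(x')|\bigr|\le\dis f$ to pairs in $A$ and compare the two diameters, the paper via an $\e$-argument with near-extremal pairs and you via taking suprema directly, which is only a presentational difference. Your handling of the infinite cases (noting that $\dis f=\infty$ is trivial and that finiteness of one diameter propagates to the other when $\dis f<\infty$) is a clean substitute for the paper's separate first paragraph treating the case where only one diameter is finite.
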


\begin{proof}
Suppose that only one of the sets $A$ and $f(A)$ has a finite diameter. Then, in the set of finite diameter, each pair of points is at a bounded distance, while in the second, the corresponding points (from the image or preimage of $f$) can be chosen arbitrarily far apart, which proves $\dis f=\infty$, and the inequality holds.

Now let both $A$ and $f(A)$ have finite diameters. We fix $\e>0$.

Let us take points $a,a'\in A$ such that $|aa'|>\diam A-\e $. Then
$$
\dis f\ge |aa'|-\bigl|f(a)f(a')\bigr|>\diam A-\e -\bigl|f(a)f(a')\bigr|\ge\diam A-\e-\diam f(A).
$$
Due to the arbitrariness of the number $\e>0$, we obtain $\dis f\ge\diam A-\diam f(A)$.

Next, we take points $a,a'\in A$ such that $\bigl|f(a)f(a')\bigr|>\diam f(A)-\e$. Then
$$
\dis f\ge\bigl|f(a)f(a')\bigr|-|aa'|>\diam f(A)-\e-|aa'|\ge \diam f(A)-\e-\diam A.
$$
Since the number $\e>0$ is arbitrary, we obtain $\dis f\ge\diam f(A)-\diam A$, which completes the proof.
\end{proof}

The resulting inequality is a strengthening of Proposition~\ref{prop:GHelementProps} Property~(\ref{prop:GHelementProps:5}).

\begin{lem}\label{lemmacodisdistan}
For any mappings $f\:X\to Y$, $g\:Y\to X$, and any point $y_0\in Y$, we have $\codis(f,g)\ge\bigl|y_0\,f(X)\bigr|$.
\end{lem}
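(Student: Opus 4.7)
The plan is to exploit the freedom in the definition of $\codis(f,g)$ by making a clever single choice of the pair $(x,y)$ that forces one of the two terms inside the absolute value to vanish.

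Specifically, I would take $y = y_0$ and $x = g(y_0) \in X$. Then $g(y) = g(y_0) = x$, so $|x\,g(y)| = 0$, and the quantity under the supremum in the definition~(\ref{eq:4}) of $\codis(f,g)$ reduces to $\bigl|f(g(y_0))\,y_0\bigr|$. Since this is one particular pair, it is bounded above by the supremum:
$$
\codis(f,g) \ge \bigl|f(g(y_0))\,y_0\bigr|.
$$

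Finally, since $f(g(y_0)) \in f(X)$, the number $\bigl|f(g(y_0))\,y_0\bigr|$ is at least the infimum $\inf_{x \in X}\bigl|y_0\,f(x)\bigr| = \bigl|y_0\,f(X)\bigr|$, yielding the desired inequality. There is no real obstacle here; the statement is essentially an application of the definition at a well-chosen argument, and the only thing to notice is that no assumptions on continuity, compactness, or boundedness are needed (the conclusion even allows both sides to be infinite in a consistent way).
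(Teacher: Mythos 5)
Your proposal is correct and is essentially the paper's own proof: both choose $x=g(y_0)$ and $y=y_0$ so that $|x\,g(y_0)|=0$, leaving $\bigl|f(g(y_0))\,y_0\bigr|\ge\bigl|y_0\,f(X)\bigr|$ under the supremum. No differences worth noting.
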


\begin{proof}
Take a point $x=g(y_0)\in X$. Then
$$
\codis(f,g)\ge\bigl|f(x)\,y_0\bigr|-\bigl|x\,g(y_0)\bigr|=\bigl|f(x)\,y_0\bigr|\ge\bigl|y_0\,f(X)\bigr|,
$$
which is what was required.
\end{proof}

\begin{examp}\label{examp:noncomplete}
In the plane with coordinates $(x,y)$ we choose the following intervals: $J=\{(0,y):-1\le y\le 1\}$, $I=\{(x,0):0\le x\le 1\}$, and $I_n=\{(x,0):\frac1n\le x\le 1\}$, $n\in \N$. Consider the triode $X=J\cup I$ and the sequence of spaces $X_n=J\cup I_n$, $n\in \N$. On the triode $X=J\cup I$ we take the intrinsic metric, and on its subsets $X_n$ the metric induced by this intrinsic metric.
\end{examp}

\begin{thm}\label{thm:noncomplete}
The sequence of spaces $X_n=J\cup I_n$, $n\in \N$ is fundamental w.r.t\. the metric $d^c_{GH}$, but no metric space is its limit in this metric.
\end{thm}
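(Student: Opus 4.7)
My plan has two parts: first, to verify that $(X_n)$ is $d^c_{GH}$-fundamental, and then to derive a contradiction from the assumption that some $Y$ is a limit, by exploiting the connectedness of the triode $X=J\cup I$. For the fundamental claim, given $n<m$ I would take $f\colon X_n\hookrightarrow X_m$ to be the inclusion (so $\dis f=0$, since both spaces carry the metric induced from $X$) together with the retraction $g\colon X_m\to X_n$ that is the identity on $J$ and sends $(x,0)\in I_m$ to $\bigl(\max(x,1/n),0\bigr)$. The map $g$ is continuous on each of the two clopen components $J$ and $I_m$ of $X_m$, hence continuous, and a routine case analysis gives $\dis g,\codis(f,g)\le 1/n-1/m$, so that $2d^c_{GH}(X_n,X_m)\le 1/n-1/m\to 0$.

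Now suppose some metric space $Y$ satisfies $d^c_{GH}(X_n,Y)\to 0$. Property~(\ref{prop:GHelementProps:8}) of Proposition~\ref{prop:GHelementProps} gives $d_{GH}(X_n,Y)\to 0$, while $X_n\to X$ in the Hausdorff metric forces $d_{GH}(X_n,X)\to 0$, so $d_{GH}(X,Y)=0$. Since the triode $X$ is compact, Theorem~\ref{thm:equivcompGHzero} lets me identify $Y$ isometrically with a dense subset of $X$. Fix $n$ large and pick continuous $f_n\colon X_n\to Y$, $g_n\colon Y\to X_n$ with $\dis R_{f_n,g_n}<1/(2n)$. Applying Lemma~\ref{lemmacodisdistan} (with the roles of the spaces exchanged, using the symmetry of $\codis$) to the points $(0,1)\in J$ and $(1,0)\in I_n$ shows that $g_n(Y)$ meets both components $J$ and $I_n$ of $X_n$. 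Hence $Y_J:=g_n^{-1}(J)$ and $Y_I:=g_n^{-1}(I_n)$ are both nonempty, and since $J,I_n$ are clopen in $X_n$, they form a nontrivial clopen partition of $Y$.

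The crux is the separation estimate: for $y\in Y_J$ and $y'\in Y_I$, the inequality $|yy'|\ge|g_n(y)\,g_n(y')|-\dis g_n\ge 1/n-1/(2n)=1/(2n)$ gives $|Y_JY_I|>0$. Taking closures in the ambient triode $X$, this positive separation forces $\overline{Y_J}\cap\overline{Y_I}=\emptyset$, while density of $Y$ in $X$ gives $\overline{Y_J}\cup\overline{Y_I}=X$; the resulting disjoint closed decomposition into two nonempty parts contradicts the connectedness of $X$. The main obstacle I foresee is precisely this last step, namely establishing the \emph{positive} separation of the two halves of the forced partition, since without it one could not transfer the clopen splitting from the dense subset $Y$ to the connected continuum $X$.
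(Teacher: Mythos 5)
Your verification that the sequence is fundamental is fine, and so are the reduction (via Theorem~\ref{thm:equivcompGHzero}) to a dense subset $Y\ss X$ and the observation that a continuous $g_n\:Y\to X_n$ with $\codis(f_n,g_n)<1$ must meet both clopen components $J$ and $I_n$. The argument breaks, however, at the step ``fix $n$ large and pick continuous $f_n,g_n$ with $\dis R_{f_n,g_n}<1/(2n)$.'' The hypothesis $d^c_{GH}(X_n,Y)\to0$ carries no rate of convergence: combining it with your fundamentality estimate and the triangle inequality yields only $2d^c_{GH}(X_n,Y)\le 1/n$, and nothing in the proof excludes the possibility that $2d^c_{GH}(X_n,Y)$ equals, say, exactly $1/n$ for every $n$ — in which case no $n$ admits maps with $\dis g_n<1/n$, let alone $<1/(2n)$. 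Since the gap $|JI_n|=1/n$ between the two components of $X_n$ shrinks at precisely this rate, your separation estimate $|yy'|\ge 1/n-\dis g_n$ may be vacuous for every $n$. This is not a removable technicality but a defect of the chosen obstruction: read contrapositively, your argument establishes only that $2d^c_{GH}(X_n,Y)\ge\min\bigl\{1,\,|JI_n|\bigr\}=1/n$ for a dense $Y$, and a lower bound tending to zero cannot contradict $d^c_{GH}(X_n,Y)\to0$. The width of the gap separating the clopen pieces of $X_n$ decays with $n$, so it can never supply the lower bound, uniform in $n$, that the theorem requires.

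The paper circumvents this by producing an obstruction whose size does not decay. It first shows that a candidate limit $X_\infty\ss X$ must contain every point of the triode except possibly the three endpoints: if an interior point $(r,0)$ were absent, it would split $X_\infty$ into relatively clopen pieces, and since the connected sets $f(J)$ and $f(I_m)$ must each fall into a single component of $X\sm\bigl\{(r,0)\bigr\}$, Lemma~\ref{lemmadiametr} and Lemma~\ref{lemmacodisdistan} give $\dis R_{f,g}\ge\min\{1+r,\,r-1/m,\,1-r\}$, which is bounded below for fixed $r$ as $m\to\infty$ (with a separate case for $r=0$). Hence $X_\infty$ is connected, so any continuous $g\:X_\infty\to X_m$ lands in a single component of $X_m$, and Lemma~\ref{lemmacodisdistan} then forces $\codis(f,g)\ge 1-1/m$ — a bound bounded away from zero. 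If you wish to salvage your route, you must first prove that $Y$ omits at most the three endpoints; but that already makes $Y$ connected and renders your clopen partition impossible outright, so the positive-separation trick cannot replace that step.
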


\begin{proof}
It is easy to see that $d_{GH}^c(X_m,X_n)\le\frac{m-n}{mn}<\frac1n$ for $m\ge n$. Therefore, the sequence of compact metric spaces $X_n$ is fundamental in the metric $d_{GH}^c$.

One can easily verify that $d_{GH}(X,X_n)\le d_{H}(X,X_n)=\frac1{2n}$. Therefore, the triode $X$ is the limit of the sequence $X_n$ in the metric $d_{GH}$.

Let this sequence have a limit $X_\infty $ in the metric $d^c_{GH}$.

From the inequality $d_{GH}\le d_{GH}^c$ it follows that the space $X_\infty $ is also the limit of the sequence $X_n$ in the metric $d_{GH}$. Therefore, $d_{GH}(X,X_\infty)=0$.

According to Theorem~\ref{thm:equivcompGHzero}, we can assume that the space $X_\infty$ belongs to the triode $X$ and is dense there.

Let us show that $\bigl\{(0,-1)\bigr\}\cup\bigl\{(0,1)\bigr\}\cup\bigl\{(1,0)\bigr\}\cup X_\infty=X$. The inclusion of the left set into the right set is obvious.

Take an arbitrary $(x_0,y_0)\in X\sm\Bigl(\bigl\{(0,-1)\bigr\}\cup\bigl\{(0,1)\bigr\}\cup\bigl\{(1,0)\bigr\}\cup X_\infty\Bigr)$.

a. Let $(x_0,y_0)\ne(0,0)$. Without loss of generality we can assume that $(x_0,y_0)=(r,0)$, $r>0$. Then one of the connected components $K$ of the space $X\sm\bigl\{(x_0,y_0)\bigr\}$ has diameter $\diam K=1-r<1$. We fix an index $n$ such that $\diam I_n=1-1/n>\diam K$.

Let $m\ge n$ and the mapping $f\:X_m\to X_\infty$ be continuous. The point $(r,0)$ partitions the space $X_\infty$, so any connected set intersecting $K$ lies entirely in $K$.

If $f(J)\cap K\ne\0$, then $f(J)\ss K$ and $\dis f\ge2-(1-r)=1+r$ according to Lemma~\ref{lemmadiametr}.

If $f(I_m)\cap K\ne \0 $, then $f(I_m)\ss K$ and $\dis f\ge 1-\frac1m-(1-r)=r-\frac1m>r-\frac1n$ according to Lemma~\ref{lemmadiametr}.

If $f(X_m)\cap K=\0 $, then $\codis (f,g)\ge 1-r$ according to Lemma~\ref{lemmacodisdistan} for any mapping $g\:X_m\to X_\infty$.

b. Let $(x,y)=(0,0)$. Then the space $X\sm\bigl\{(x,y)\bigr\}=K_1\sqcup K_2\sqcup K_3$ consists of three connected components of diameter $1$. Hence, for every continuous mapping $f\:X_m\to X_\infty\ss X$, there is a connected component $K_i$ such that $f(J)\ss K_i$. Therefore, $\dis(f)\ge\diam J-\diam K_i=1$ by Lemma~\ref{lemmadiametr}.

In account, we proved that the space $X_\infty$ is connected. This means that for every continuous mapping $g\:X_\infty\to X_m$ of a connected space into a space with two connected components, the latter has a connected component $K$ such that $g(X_\infty)\cap K=\0 $. By Lemma~\ref{lemmacodisdistan}, $\codis (f,g)\ge1-\frac1m$ for any mapping $g\:X_m\to X_\infty$.

Therefore, for any metric space $X_\infty$, the sequence of distances $d_{GH}^c(X_m,X_\infty)$ cannot tend to zero.
\end{proof}

\section{Addition: An Analog of Topology on Proper Classes}\label{sec:add}
The theory we develop in this article deals with all nonempty metric spaces, considered up to isometry. Since a metric can be introduced on every set, for example, by setting all distances between distinct points equal to $1$, the family of all metric spaces is not a set. To work with such families, we will use von Neumann-Bernays-G\"odel set theory~\cite{Mendelson, TBanach}, and we will always assume that the axiom of choice holds. This theory is usually abbreviated NBGC. Recall that all objects in this theory are called \emph{classes\/} and they come in two types: \emph{sets\/} are the classes that are elements of other classes, and \emph{proper classes}, which are not elements of any other classes. Here are some examples of proper classes that are important to us:
\begin{itemize}
\item the class $\cV$ of all sets;
\item the class $\ORD$ of all ordinals;
\item the class $\CARD$ of all cardinals;
\item the class $\TOP$ of all topologies on all sets in $\cV$;
\item the class $\GH$ of all metric spaces, considered up to isometry.
\end{itemize}

Many standard operations are defined for classes, such as intersection, complement, product, mapping, and others.

We will use the following terminology: a \emph{generalized pseudometric on a set $X$} is any mapping $\r\:X\x X\to[0,\infty]$ satisfying the conditions $\r(x,y)=\r(y,x)$ for all $x,y\in X$ (symmetry), $\r(x,x)=0$ for all $x\in X$, and $\r(x,z)\le\r(x,y)+\r(y,z)$ for all $x,y,z\in X$ (triangle inequality). If $\r(x,y)=0$ exactly when $x=y$, then the word ``pseudometric'' is replaced by the word \emph{metric}. If $\r(x,y)<\infty$ for all $x,y\in X$, then the word ``generalized'' is dropped. Since the product and mapping are defined for all classes, the concepts just described carry over word for word to arbitrary classes. Below, we recall the definitions of the Gromov--Hausdorff distance and the continuous Gromov--Hausdorff distance, which, as will be noted, are generalized pseudometrics on the proper class $\GH$.

An interesting feature of proper classes is the impossibility of literally transferring the concept of topology to them: indeed, the entire set on which the topology is defined is an element of the topology, so if we consider a proper class instead of the set, it cannot be an element of the topology. To circumvent this problem and introduce an analogue of topology, it was proposed in~\cite{ITBorzov} to consider \emph{classes $\gC$ filtered by sets}. The latter means that for every cardinal number $n$, the subclass $\gC_n=\{x\in\gC:\#x\le n\}$, where $\#x$ denotes the cardinality of the set $x$, is a set. Examples of such classes include any sets, as well as the proper classes $\ORD$, $\CARD$, and $\GH$. The classes $\cV$ and $\TOP$ are not such.

Let $\gC$ be a set-filtered class. Consider a mapping $\t\:\CARD\to\TOP$ such that
\begin{itemize}
\item $\t_n:=\t(n)$ is a topology on $\gC_n$ for every $n\in\CARD$,
\item for every $m,n\in\CARD$, $m\le n$, the topology $\t_m$ is induced from $\t_n$.
\end{itemize}
Note that if $\gC$ is a set of cardinality $n$, then $\t_n$ is the usual topology on $\gC=\gC_n$, and for every $m\ge n$ we have $\gC_m=\gC_n$, and also $\t_m=\t_n$. For $m\le n$, the topology $\t_m$ on $\gC_m$ is induced from $\t_n$ in the usual way. This allows us to call the mapping $\t$ a \emph{topology\/} even in the case of proper classes. A class $\gC$, filtered by sets, for which the topology $\t$ is defined, will be called a \emph{topological class}.

\begin{rk}
A topology in the above sense can also be defined for any proper class. To do this, it is sufficient to modify the concept of a filtration by sets, without tying it to the cardinality of the elements in the class. It is well known that in NBGC there is a bijection between any proper classes. Therefore, if $\gC$ is a topological class and $\gC'$ is an arbitrary class, say $\cV$, then using the bijection $\v\:\gC\to\gC'$, we can transfer the filtration by sets $\gC_n$ to $\gC'$, setting $\gC'_n=\v(\gC_n)$ and performing all the topological constructions described above. However, for our purposes, the filtration defined by cardinality is sufficient.
\end{rk}

An important special case is the metric topology defined on the class $\gC$, filtered by sets. Let $\r\:\gC\x\gC\to[0,\infty]$ be a generalized pseudometric. Then, for each $n\in\CARD$, there is a corresponding metric topology $\t_n$ on $\gC_n$. The corresponding mapping $\t\:\CARD\to\TOP$, $\t\:n\mapsto\t_n$, is called the \emph{metric topology}, and the space $\gC$ endowed with this topology is called the \emph{metric class}.

Note that for every $x\in\gC$ and $r\in(0,\infty]$, there is an \emph{open ball\/} $U_r(x)=\{y\in\gC:\rho(x,y)<r\}$, which is, generally speaking, a proper subclass of $\gC$. It is easy to see that if $m\in\CARD$ is not less than $\#x$, then the set $U_r(x)\cap\gC_m$ is an open ball of radius $r$ in $\gC_m$.

\begin{prop}\label{prop:SmallDisk}
For any $x\in\gC$, $r>0$, and $m\in\CARD$, the set $U_r(x)\cap\gC_m$ is open in the $\t_m$ topology.
\end{prop}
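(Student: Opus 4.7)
The plan is to reduce the statement to a standard fact about metric topologies on honest sets, by choosing a large enough cardinal so that both $x$ and the whole of $\gC_m$ sit inside a single level $\gC_n$ of the filtration, on which $\tau_n$ is literally the metric topology of a metric space.

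First I would dispatch the easy case $\#x\le m$. In this case $x\in\gC_m$, the restriction of $\r$ to $\gC_m\x\gC_m$ is a generalized pseudometric on the set $\gC_m$, and
$$
U_r(x)\cap\gC_m=\bigl\{y\in\gC_m:\r(x,y)<r\bigr\}
$$
is by definition the open $\r$-ball of radius $r$ around $x$ in $\gC_m$; hence it lies in $\t_m$ tautologically.

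For the remaining case $\#x>m$, I would choose a cardinal $n\ge m$ with $n\ge\#x$ (for instance $n=\max(m,\#x)$, which exists since cardinals are totally ordered). Then $x\in\gC_n$ and $\gC_m\ss\gC_n$, so the previous paragraph, applied with $n$ in place of $m$, shows that $U_r(x)\cap\gC_n$ is open in $\t_n$. The compatibility clause in the definition of a topology on a set-filtered class — namely, that for $m\le n$ the topology $\t_m$ is induced from $\t_n$ — now yields that
$$
\bigl(U_r(x)\cap\gC_n\bigr)\cap\gC_m
$$
is open in $\t_m$. Since $\gC_m\ss\gC_n$, this intersection is just $U_r(x)\cap\gC_m$, and the proof is complete.

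There is no real obstacle here: the only substantive step is noting that such a cardinal $n$ exists, which is automatic in NBGC, and that the author's coherence condition on the filtration $\{\t_n\}$ really means subspace topology. So the proposition is essentially a consistency check that the ``metric topology'' on a metric class restricts correctly to each filtration level.
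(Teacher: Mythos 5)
Your proof is correct, but it takes a different route from the paper's. The paper argues directly inside $\gC_m$: for any $y\in U_r(x)\cap\gC_m$ it picks $\dl>0$ with $|xy|<r-\dl$, notes $U_\dl(y)\ss U_r(x)$ by the triangle inequality, and observes that $U_\dl(y)\cap\gC_m$ is an honest open ball of $\gC_m$ (since $y\in\gC_m$, even though $x$ may not be); so $U_r(x)\cap\gC_m$ is a union of balls of $\t_m$. You instead pass to a level $\gC_n$ with $n\ge\max(m,\#x)$, where the set becomes a genuine ball centered at a point of the space, and then push down to $\gC_m$ using the coherence clause that $\t_m$ is the subspace topology induced from $\t_n$. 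Both arguments are sound. The paper's version is self-contained and never invokes the compatibility of the filtration; yours is more structural and shorter on its face, but the two facts you lean on --- that an open ball is open in the pseudometric topology, and that restricting the pseudometric to $\gC_m$ yields the subspace topology of $\t_n$ --- are each verified by essentially the same triangle-inequality computation the paper writes out, so the work is relocated rather than avoided. One small caveat: the paper only asserts the coherence condition as part of the abstract definition of a topology on a set-filtered class; for the metric topology it holds by the standard fact about restricted metrics, which you should at least cite rather than treat as part of the definition.
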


\begin{proof}
Choose an arbitrary $y\in U_r(x)\cap\gC_m$, then $|xy|<r$. There exists $\dl>0$ such that $|xy|<r-\dl$, so $U_\dl(y)\ss U_r(x)$, but $U_\dl(y)\cap\gC_m$ is an open ball of radius $\dl$ in $\gC_m$, and $U_\dl(y)\cap\gC_m\ss U_r(x)\cap\gC_m$, therefore $U_r(x)\cap\gC_m\in\t_m$.
\end{proof}

If $f\:\gA\to\gB$ is a mapping between two topological classes, then its continuity is defined naturally. Namely, it is known that the image of every set is also a set, so for every cardinal $n$ there exists a cardinal $m$ such that $f(\gA_n)\ss\gB_m$. The smallest of such cardinals $m$ will be denoted by $n_f$. Restricting $f$ to the mapping from $\gA_n$ to $\gB_{n_f}$, we obtain the usual mapping of topological spaces. It is easy to see that if we take a cardinal number $m\ge n_f$ instead of $n_f$, then the restriction of $f$ to $\gA_n$ and $\gB_m$ is continuous (at a point or in the whole) if and only if its restriction to $\gA_n$ and $\gB_{n_f}$ is continuous. Thus, the \emph{continuity\/} of a mapping $f$ at a point or in the whole is defined as the continuity of all its restrictions $f\:\gA_n\to\gB_{n_f}$ (we will continue to denote the restriction by the same letter as the original mapping).

\begin{thm}\label{thm:ContMapMetricClasses}
Let $f\:\gA\to\gB$ be a mapping of metric classes. Then $f$ is continuous at a point $x\in\gA$ if and only if for every $\e>0$, there exists a $\dl>0$ such that $f\bigl(U_\dl(x)\bigr)\ss U_\e\bigl(f(x)\bigr)$. The mapping $f$ is continuous in the whole if and only if it is continuous at every point.
\end{thm}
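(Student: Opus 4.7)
The plan is to handle the two implications separately; the forward direction is short while the converse carries the main content. For the ``if'' direction I would fix $n\in\CARD$ with $x\in\gA_n$, an open neighborhood $V$ of $f(x)$ in $\gB_{n_f}$, and choose $\e>0$ with $U_\e(f(x))\cap\gB_{n_f}\ss V$, which is possible since $\t_{n_f}$ is the metric topology. The hypothesis then supplies $\dl>0$ with $f(U_\dl(x))\ss U_\e(f(x))$; by Proposition~\ref{prop:SmallDisk} the set $U_\dl(x)\cap\gA_n$ is an open $\t_n$-neighborhood of $x$ whose image under $f$ lies in $V$, giving continuity of $f|_{\gA_n}$ at $x$.

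For the converse, assume each restriction $f|_{\gA_n}\:\gA_n\to\gB_{n_f}$ is continuous at $x$ and fix $\e>0$. I plan to set
$$
\dl_n:=\inf\bigl\{|xy|:y\in\gA_n,\ |f(x)f(y)|\ge\e\bigr\}
$$
(with the convention $\inf\0=+\infty$). Applying the metric continuity of $f|_{\gA_n}$ at $x$ to the $\t_{n_f}$-open neighborhood $U_\e(f(x))\cap\gB_{n_f}$ of $f(x)$ produced by Proposition~\ref{prop:SmallDisk} yields $\dl_n>0$ for every admissible $n$. The task then reduces to proving $\dl:=\inf_{n\in\CARD}\dl_n>0$, because then any $\dl'\in(0,\dl)$ would witness $f(U_{\dl'}(x))\ss U_\e(f(x))$ by construction.

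The main obstacle is that $\dl$ is an infimum over the proper class $\CARD$, so positivity of each $\dl_n$ does not formally preclude $\dl=0$. I will argue by contradiction. Supposing $\dl=0$, for every $k\in\N$ I can choose $n_k\in\CARD$ with $\dl_{n_k}<1/k$ and, by the definition of infimum, a witness $y_k\in\gA_{n_k}$ with $|xy_k|<1/k$ and $|f(x)f(y_k)|\ge\e$. Here I exploit that $\{n_k\}_{k\in\N}$ is a countable, and hence set-sized, collection of cardinals, so its supremum $N$ is itself a cardinal (a standard fact: the supremum of any set of cardinals is a cardinal). Since $n_k\le N$ gives $\gA_{n_k}\ss\gA_N$, all the $y_k$ belong to $\gA_N$, which forces $\dl_N\le |xy_k|<1/k$ for every $k$ and hence $\dl_N=0$, contradicting $\dl_N>0$. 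This collapse of a proper-class infimum to a set-indexed one is the decisive point.

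The claim about continuity in the whole then follows immediately: by definition it unpacks on both sides to the conjunction of pointwise continuity over all $x\in\gA$, so the pointwise equivalence just proved extends verbatim.
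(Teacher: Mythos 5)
Your proof is correct and follows essentially the same route as the paper: the forward direction is the same observation that metric balls generate each $\t_n$, and your decisive step in the converse --- extracting countably many witnesses $y_k$ and collapsing their cardinalities to a single cardinal $N$ so that continuity of the one restriction $f\:\gA_N\to\gB_{N_f}$ yields the contradiction --- is exactly the paper's argument, merely repackaged through the auxiliary moduli $\dl_n$ instead of a direct negation of the $\e$--$\dl$ condition.
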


\begin{proof}
First, let for any $\e>0$ there exists $\dl>0$ such that $f\bigl(U_\dl(x)\bigr)\ss U_\e\bigl(f(x)\bigr)$. We show that $f$ is continuous at $x$. Choose an arbitrary $n$, an arbitrary $\e>0$, and a corresponding $\dl>0$ satisfying the above property. Since $U_\dl(x)\cap\gA_n$ and $U_\e\bigl(f(x)\bigr)\cap\gB_{n_f}$ are open balls in $\gA_n$ and $\gB_{n_f}$, respectively, and by assumption the image of the first is contained in the second, the restriction $f\:\gA_n\to\gB_{n_f}$ is continuous at $x$.

We now prove the converse by contradiction. Namely, suppose that for some $f$ there exist $x\in\gA$ and $\e>0$ such that $f\bigl(U_\dl(x)\bigr)\ss U_\e\bigl(f(x)\bigr)$ does not hold for any $\dl>0$. The latter means that there exists a sequence $y_k\in\gA$ for which $|x\,y_k|<1/k$, but $\bigl|f(x)f(y_k)\bigr|\ge\e$. Let $n$ be the cardinality of $x$, and $n_k$ the cardinality of $y_k$. Since the restriction $f\:\gA_n\to\gB_{n_f}$ is continuous, there exists $s>0$ such that $f$ maps $U_s(x)\cap\gA_n$ into $U_\e\bigl(f(x)\bigr)\cap\gB_{n_f}\ss U_\e\bigl(f(x)\bigr)$. Thus, we can immediately assume that all $n_k$ are at least $n$.

Since a countable union of sets is still a set, there exists $p\in\CARD$ such that $p\ge n_k$ for all $k$. Note that $x\in\gA_p$, $f(x)\in\gB_{p_f}$, and the restriction $f\:\gA_p\to\gB_{p_f}$ are continuous, so there exists $s>0$ such that the $f$-image of the ball $U_s(x)\cap\gA_p$ of radius $s$, open in $\gA_p$, is contained in the ball $U_\e\bigl(f(x)\bigr)\cap\gB_{p_f}\ss U_\e\bigl(f(x)\bigr)$ of radius $\e$, open in $\gB_{p_f}$. But then for $1/k<s$, we have $y_k\in U_s(x)\cap\gA_p$, and hence $\bigl|f(x)f(y_k)\bigr|<\e$, a contradiction.
\end{proof}

\begin{cor}\label{cor:ContLipschMetrCl}
Let $f\:\gA\to\gB$ be a Lipschitz mapping of metric classes, in particular, non-expanding. Then $f$ is continuous.
\end{cor}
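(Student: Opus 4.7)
The plan is to reduce the corollary directly to the $\e$--$\dl$ criterion established in Theorem~\ref{thm:ContMapMetricClasses}. By ``Lipschitz mapping of metric classes'' we understand, transferring the definition from sets verbatim as the paper does for metrics and pseudometrics, that there exists a constant $L\ge 0$ such that $|f(x)f(y)|_\gB\le L\,|xy|_\gA$ for all $x,y\in\gA$. Given $x\in\gA$ and $\e>0$, I would set $\dl=\e/L$ if $L>0$, and any $\dl>0$ if $L=0$. Then for every $y\in U_\dl(x)$ the Lipschitz inequality gives
$$
\bigl|f(x)f(y)\bigr|_\gB\le L\,|xy|_\gA<L\dl=\e,
$$
so $f(y)\in U_\e\bigl(f(x)\bigr)$, i.e.\ $f\bigl(U_\dl(x)\bigr)\ss U_\e\bigl(f(x)\bigr)$. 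By Theorem~\ref{thm:ContMapMetricClasses}, $f$ is continuous at $x$, and since $x\in\gA$ was arbitrary, the same theorem yields continuity on the whole class. The non-expanding case corresponds to $L=1$.

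There is essentially no obstacle here: the entire burden of translating between the filtered topologies $\t_n$ on $\gA_n$, $\gB_{n_f}$ and the global ball language on $\gA$, $\gB$ has already been absorbed into Theorem~\ref{thm:ContMapMetricClasses}. The only point worth mentioning is that the Lipschitz condition, being stated as a global inequality on $\gA\x\gA$, automatically restricts to the same Lipschitz inequality on every subset $\gA_n$, so no compatibility issue with the set-filtration arises. Thus the classical one-line argument for Lipschitz $\Rightarrow$ continuous transfers without modification to the setting of metric classes.
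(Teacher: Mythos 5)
Your argument is correct and is exactly the reduction the paper intends: the corollary is stated immediately after Theorem~\ref{thm:ContMapMetricClasses} as a direct consequence of its $\e$--$\dl$ criterion, and your choice $\dl=\e/L$ (any $\dl$ for $L=0$) is the standard one-line verification. Note also that only points $y\in U_\dl(x)$ enter the estimate, so the fact that the generalized pseudometric may take the value $\infty$ causes no trouble.
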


A mapping $f\:\gA\to\gB$ of metric classes is called \emph{open at $x\in\gA$} if for every $r>0$, there exists $s>0$ such that $f\bigl(U_r(x)\bigr)\sp U_s\bigl(f(x)\bigr)$. The following result is immediately implied by the theorem.

\begin{cor}\label{cor:ContInverseMetrCl}
Let $f\:\gA\to\gB$ be a bijective mapping of metric classes. Then $f^{-1}$ is continuous at $z=f(x)\in\gB$ if and only if $f$ is open at $x$.
\end{cor}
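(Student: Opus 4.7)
The plan is to derive this corollary directly from Theorem~\ref{thm:ContMapMetricClasses} applied to the inverse map $f^{-1}\:\gB\to\gA$, then rewrite the resulting $\e$-$\dl$ condition using the bijectivity of $f$ so that it matches the definition of openness at $x$. Since $f$ is a bijection between classes, $f^{-1}$ is a well-defined mapping of the same metric classes, and Theorem~\ref{thm:ContMapMetricClasses} applies to it verbatim.

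First I would write down what the theorem gives: $f^{-1}$ is continuous at $z=f(x)$ if and only if for every $\e>0$ there is $\dl>0$ with
$$
f^{-1}\bigl(U_\dl(z)\bigr)\ss U_\e\bigl(f^{-1}(z)\bigr)=U_\e(x).
$$
Then I would apply $f$ to both sides and use bijectivity. Because $f$ is a bijection, $f\bigl(f^{-1}(A)\bigr)=A$ for any subclass $A\ss\gB$, and the inclusion $f^{-1}(A)\ss B$ is equivalent to $A\ss f(B)$. Thus the displayed inclusion above is equivalent to
$$
U_\dl\bigl(f(x)\bigr)=U_\dl(z)\ss f\bigl(U_\e(x)\bigr),
$$
which, renaming $\e\mapsto r$ and $\dl\mapsto s$, is exactly the condition that $f$ is open at $x$.

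There is essentially no obstacle: the content of the corollary is a routine equivalence of two quantifier statements via the bijectivity identity $f^{-1}(A)\ss B\iff A\ss f(B)$, and the class-theoretic subtleties have already been absorbed into Theorem~\ref{thm:ContMapMetricClasses}. The only point to check carefully is that applying the identity $f(f^{-1}(A))=A$ is legitimate for subclasses (not just sets) of $\gB$, which follows immediately from $f$ being a bijection of classes in the NBGC sense. With that, the equivalence is proved.
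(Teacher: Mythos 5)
Your argument is correct and is exactly the derivation the paper has in mind: the paper offers no written proof, stating only that the corollary ``is immediately implied by the theorem,'' and your application of Theorem~\ref{thm:ContMapMetricClasses} to $f^{-1}$ followed by the bijectivity equivalence $f^{-1}(A)\ss B\iff A\ss f(B)$ is the intended one-line justification. The only point worth noting explicitly is that $f^{-1}$ is itself a legitimate mapping of metric classes (images of sets under a class bijection are sets), so the theorem does apply to it verbatim, as you assumed.
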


\markright{Bibliography}


\begin{thebibliography}{20}
\bibitem{Rieffel} \href{https://arxiv.org/pdf/math/0011063}{Rieffel, M.\,A. 2003, ``Gromov-Hausdorff Distance for Quantum Metric Spaces'', \textit{ArXiv e-prints}, arXiv:math/0011063[math.OA].}

\bibitem{LimMemoliSmith} Lim, S., Memoli, F. \& Smith, Z. 2023, ``The Gromov–Hausdorff distance between spheres'', \textit{Geometry \& Topology}, vol. 27, no. 9, pp. 3733--3800.

\bibitem{LeeMorales}  Lee, J. \& Morales, C.\,A. 2022, ``Gromov-Hausdorff Stability of Dynamical Systems and Applications to PDEs'', Birkh\"auser/Springer.

\bibitem{Mendelson} Mendelson, E. 1984, ``Introduction to Mathematical Logic'', M.: Nauka.

\bibitem{TBanach} \href{https://arxiv.org/pdf/2006.01613}{Banach, T. 2023, ``Classical set theory: theory of sets and classes'', \textit{ArXiv e-prints}, arXiv:2006.01613[math.LO].}

\bibitem{Gromov1981} Gromov, M. 1981, ``Structures m\'etriques pour les vari\'et\'es riemanniennes'', Edited by Lafontaine and Pierre Pansu.

\bibitem{Gromov1999} Gromov, M. 1999, ``Metric structures for Riemannian and non-Riemannian spaces'', Birkh\"auser.

\bibitem{BurBurIva01} Burago, D., Burago, Yu. \& Ivanov, S. 2001, ``A Course in Metric Geometry'', Providence.

\bibitem{B} Bing, R.\,H. 1948, ``A homogeneous indecomposable plane continuum'', \textit{Duke Math. J.}, vol. 15, pp. 729--742.

\bibitem{BT21} \href{https://arxiv.org/pdf/2110.06101}{Bogatyy, S.\,A. \& Tuzhilin, A.\,A. 2021, ``Gromov--Hausdorff class: its completeness and cloud geometry'', \textit{ArXiv e-prints}, arXiv:2110.06101[math.MG].}

\bibitem{BT23} Bogatyy, S.\,A. \& Tuzhilin, A.\,A. 2023, ``Action of similarity transformation on families of metric spaces'', \textit{Itogi Nauki i Tekhn.}, vol. 223, pp. 3--13.

\bibitem{BBRT25} Bogataya, S.\,I., Bogatyy, S.\,A., Redkozubov, V.\,V. \& Tuzhilin, A.\,A. 2023, ``Clouds in Gromov--Hausdorff Class: their completeness and centers'', \textit{Topology and its Applications}, vol. 329.

\bibitem{Fet}  Fet, A.\,I. 1954, ``A generalization of the Lyusternik-Shnirelman theorem on coverings of spheres and some related theorems'', \textit{DAN}, vol. 95, no. 6, pp. 1149--1151.

\bibitem{VikhrovCheb} Vikhrov, A.\,A. 2025, ``The problem of constructing geodesics in the Gromov--Hausdorff class: an optimal Hausdorff implementation does not always exist'', \textit{Chebyshevski sb.}, vol. 26, no. 2, pp. 49--60.

\bibitem{VikhrovSerb} Vikhrov, A. 2025, ``Geometry of linear and nonlinear geodesics in the proper Gromov--Hausdorff class'', \textit{Matematicki vesnik}, pp. 1--17.

\bibitem{Stone} Stone, A.\,H. 1948, ``Paracompactness and product spaces'', \textit{Bull. Amer. Math. Soc.}, vol. 54, pp. 977--982.

\bibitem{E85} Engelking, R. 1985, ``General Topology'', Warszawa.

\bibitem{Munkres} Munkres, J.\,R., 2000, ``Topology'', 2nd Edition, Prentice Hall, Upper Saddle River.

\bibitem{Dowker} Dowker, C.\,H. 1947, ``Mapping theorems for non-compact spaces'', \textit{Amer. J. Math.}, vol. 69, pp. 200--242.

\bibitem{NR65} Nagami, K. \& Roberts, J.\,H. 1965, ``Metric-dependent dimension functions'', \textit{Proc. Amer. Math. Soc.}, vol. 16, no. 4, pp. 601--604.

\bibitem{IllanesNadler} Illanes, A. \& Nadler, S.\,Jr. 1999, ``Hyperspaces'', Marcel Dekker, New York.

\bibitem{Knaster} Knaster, B. 1922, ``Un continu dont tout sous-continu est indecomposable'', \textit{Fund. Math.}, vol. 3, pp. 247--286.

\bibitem{Moise} Moise, E.\,E. 1948, ``An indecomposable plane continuum which is homeomorphic to each of its nondegenerate subcontinua'', \textit{Trans. Amer. Math. Soc.}, vol. 63, pp. 581--594.

\bibitem{BingUnique} Bing, R.\,H. 1951, ``Concerning hereditarily indecomposable continua'', \textit{Pacific J. Math.}, vol. 1, pp. 43--51.

\bibitem{C67} Cook, H. 1967, ``Continua which admit only the identity mapping onto non-degenerate subcontinua'', \textit{Fundamenta Mathematicae}, vol. 60, pp. 241--249.

\bibitem{ITBorzov} \href{https://arxiv.org/pdf/2009.00458}{Borzov, S.\,I., Ivanov, A.\,O. \& Tuzhilin, A.\,A. 2020, ``Extendability of Metric Segments in Gromov-Hausdorff Distance'', \textit{ArXiv e-prints}, arXiv:2009.00458[math.MG].}

\end{thebibliography}
\end{document}